\documentclass[10.5pt, a4paper]{amsart}
\usepackage{amssymb, amsmath, amscd, bm, amsthm, mathtools, pdfpages, setspace, subcaption, graphicx, xfrac, empheq, enumitem, array, arydshln, hyperref, relsize}

\usepackage[
backend=biber,
style=numeric-comp,
sortcites=true,
maxnames=99
]{biblatex}

\DeclareFieldFormat{pagetotal}{#1 pp}
\renewbibmacro*{note+pages}{
	\printfield{note}
	\setunit{\bibpagespunct}
	\printfield{pages}
	\setunit{\addcomma\space}
	\printfield{pagetotal}
	\clearfield{pagetotal}
}
\renewbibmacro*{chapter+pages}{
	\printfield{chapter}
	\setunit{\bibpagespunct}
	\printfield{pages}
	\setunit{\addcomma\space}
	\printfield{pagetotal}
	\clearfield{pagetotal}
}
\def\@Rref#1{\hbox{\rm \ref{#1}}}
\def\Rref#1{\@Rref{#1}}
\theoremstyle{plain}
\newtheorem{theorem}{Theorem}[section]
\newtheorem{proposition}[theorem]{Proposition}

\newtheorem{assumption}[theorem]{Assumption}
\newtheorem{lemma}[theorem]{Lemma}

\theoremstyle{definition}
\newtheorem{definition}{Definition}[section]

\newtheorem{remark}[definition]{Remark}

\newcommand{\Ls}{\mathrm{L}^{2}}
\newcommand{\Hs}{\mathrm{H}^{1}_{0}}

\newcommand{\Hso}{\mathrm{H}^{1}}
\newcommand{\Ht}{\mathcal{H}_{\tau}}
\newcommand{\sd}{\ensuremath{\mathrm{d}}}
\DeclareMathOperator{\trace}{trace}
\DeclareMathOperator{\blkdiag}{blkdiag}
\DeclareMathOperator{\diag}{diag}
\DeclareMathOperator{\Ran}{Ran}
\DeclareMathOperator{\Ker}{Ker}

\DeclareFontFamily{U}{mathx}{\hyphenchar\font45}
\DeclareFontShape{U}{mathx}{m}{n}{
	<5><6><7><8><9><10>
	<10.95><12><14.4><17.28><20.74><24.88>
	mathx10
}{}
\DeclareFontSubstitution{U}{mathx}{m}{n}

\DeclareSymbolFont{mathxwidebar}{U}{mathx}{m}{n}
\DeclareMathAccent{\widebar}{0}{mathxwidebar}{"73}

\begin{document}

	\title[Operator-based data embedding]{Operator-based data 
		embedding for data-driven control of continuous-time systems from noisy data}
	
	\thispagestyle{plain}
	
	\author{Masashi Wakaiki}
	\address{Graduate School of System Informatics, Kobe University, Nada, Kobe, Hyogo 657-8501, Japan}
	\email{wakaiki@ruby.kobe-u.ac.jp}
	\thanks{This work was supported in part by 
		JSPS KAKENHI Grant Number 24K06866.}
	
	\begin{abstract}
	We propose a data-driven method for 
	designing state-feedback gains that achieve stabilization,
	$H_2$-control, and $H_\infty$-control 
	for continuous-time systems. 
	The state-input data are assumed to be corrupted by 
	process noise, measurement noise, and input disturbances.
	We first characterize the set of 
	all systems consistent with the noisy data 
	using operator-based data embedding.
	This characterization yields necessary and sufficient conditions
	for data informativity under a certain class of noise. 
	These conditions are formulated 
	as linear 
	matrix inequalities, and the feedback gains are 
	constructed from their solutions.
	To enable direct controller design from noisy sampled data
	for continuous-time systems, we also obtain an upper bound on the reconstruction error of continuous-time signals.
	\end{abstract}
	
\keywords{Continuous-time systems, data-driven control, robust control, synthesis operators} 
	
	\maketitle
	
\section{Introduction}
\subsubsection*{Background}
Designing controllers for unknown dynamical systems
is a central problem in control theory.
In model-based control, a mathematical model of the system
is first identified and then used for controller design.
In contrast, data-driven control bypasses explicit system 
identification and directly constructs controllers from
measured data.
This approach is particularly useful 
when constructing an accurate model is difficult or time-consuming.

A fundamental issue in data-driven control is whether the available data contain enough 
information about the unknown system.
One influential line of research
addresses this issue through the notion of 
data informativity~\cite{Waarde2020}.
Rather than identifying a nominal model,
this approach considers the set of all systems that are 
consistent with the data.
It asks whether one can guarantee a desired property for all
data-consistent systems and construct controllers that achieve prescribed control objectives for all such systems.
In this sense, data informativity provides a theoretical framework for characterizing when the available data are informative 
enough for analysis and control.

Since data are typically obtained through sampling,
data informativity has been extensively studied for 
discrete-time systems. Here, 
we review discrete-time results on stabilization, 
$H_2$-control, and $H_\infty$-control, which are directly relevant to this study. 
For other developments, see, e.g., the tutorial paper~\cite{Waarde2023}.
Necessary and sufficient conditions under which
state-input data are  informative 
for state-feedback stabilization were first derived in~\cite{Waarde2020}. 
Data corrupted by process noise were subsequently 
considered in~\cite{Waarde2022TAC,Steentjes2022,Waarde2023SIAM},
where informativity for stabilization was characterized 
in terms of linear matrix inequalities (LMIs). This LMI-based characterization was extended to output-feedback stabilization
via behavioral theory in~\cite{Waarde2023TAC}.
Stabilization from data corrupted by measurement noise and input disturbances 
was also studied in both the state-feedback~\cite{Bisoffi2024} and output-feedback~\cite{Li2026Automatica} cases.
In the setting of performance-guaranteed control,
necessary and sufficient conditions under which
state-input data corrupted by process noise are informative 
for $H_2$-control and $H_\infty$-control
were established in \cite{Waarde2022TAC}. These informativity  results 
were further extended in \cite{Kaminaga2025arXiv} to
the case where process noise,
measurement noise, and input disturbances are present.

Motivated by the observation that many physical systems evolve in continuous time,
data informativity has 
also been investigated in the continuous-time setting. 
To obtain formulations analogous to the discrete-time case, one line of work uses data-consistency conditions based on derivative measurements;
see, e.g., \cite{DePersis2020,Bisoffi2022,Eising2025}.
Since obtaining reliable derivative measurements can be impractical in many real-world applications, 
derivative-free approaches have been developed to 
circumvent this limitation. 
One such approach employs
a polynomial orthogonal basis~\cite{Rapisarda2024},
and another relies on generalized sampling~\cite{Ohta2024MTNS, Ohta2024,Wang2025}.
Both methods transform continuous-time data into 
finite discrete sequences, which may lead to
approximation errors or
information loss.
In contrast, the synthesis-operator approach proposed in \cite{Wakaiki2025Cont} embeds continuous-time data into an integral operator,
which enables informativity analysis without 
discretizing the data.
Based on this data embedding, a necessary and sufficient LMI 
condition
under which state-input data corrupted by process noise are informative for stabilization was obtained in \cite{Wakaiki2025Cont}. The synthesis-operator approach was subsequently applied to output-feedback stabilization from multiple input-output trajectories in \cite{Wakaiki2026IOdata}.

\subsubsection*{Contributions and technical challenges}
In this paper, we investigate the informativity of continuous-time data without relying on derivative measurements.
We assume that noisy state-input data are available.
The noise class considered in this paper 
allows for process noise, measurement noise, and input disturbances, and can be 
regarded as a continuous-time counterpart of 
the data-perturbation class studied in \cite{Kaminaga2025arXiv}. 
Our aim is to characterize
informativity under this noise class 
by extending the synthesis-operator approach 
\cite{Wakaiki2025Cont,Wakaiki2026IOdata}.
to handle measurement noise and input disturbances in addition to process noise.

The contributions of this paper are threefold.
First, we characterize the set of all systems consistent with the given data under the considered noise class
in terms of a quadratic matrix inequality (QMI) involving synthesis operators. Second, based on this characterization, we establish necessary and sufficient LMI conditions 
under which state-input data are informative for stabilization, $H_2$-control, and $H_{\infty}$-control.
Third, from the viewpoint of synthesis operators, 
we provide an upper bound on the error associated with the reconstruction of continuous-time signals from noisy sampled data.
Combining the second and third contributions 
enables the direct design of controllers for 
continuous-time systems from noisy sampled data.

The characterization of the set of all data-consistent systems provides the basis for our data-driven control results.
Indeed, both the LMI conditions for informativity 
and the error analysis for reconstructed 
continuous-time data 
rely on this characterization. 
The main technical difficulty lies in proving 
the necessity of the QMI characterization.
Showing that the QMI provides a sufficient condition for data consistency is relatively straightforward, whereas proving that it is also necessary is involved. This difficulty arises from
the specific structure of synthesis operators.
In the case where only process noise is present, one can easily show that the operator constructed from the QMI has this structure; see \cite{Wakaiki2025Cont}.
However, when measurement noise and input disturbances are also present, 
verifying this property becomes more delicate. 
Resolving this issue is one of the main 
technical contributions of this paper.

\subsubsection*{Comparison with related work}
We finally compare the proposed approach 
with the filtering approach, 
which constitutes another powerful class of 
derivative-free data-driven control methods 
for continuous-time systems. 
In \cite{Possieri2025,Bosso2025}, 
stabilizing controllers were constructed using
state-input data and a low-pass filter.
These design methods were extended in \cite{Bosso2025,Bosso2025arXiv,Gao2025,Bosso2025Noisy,Possieri2026,Li2026}
to the case where only input-output data are available. 
In particular, a data-driven method was proposed in \cite{Bosso2025Noisy}
for designing an observer-based stabilizing
controller when the output data are corrupted by process noise and measurement noise.

Compared with the above related studies, 
the proposed method has the following advantages.
First, in the filtering approach, 
the design conditions for stabilizing controllers depend on the filter parameters, since
the controller is constructed using filtered data.
In contrast, our approach directly embeds continuous-time data into synthesis operators and analyzes the informativity
of the original data.
The resulting necessary and sufficient conditions 
for informativity
do not depend on such design parameters.

Second, while the 
existing LMI-based filtering methods
have mainly focused on stabilization, 
the proposed framework also addresses performance-guaranteed control. 
Specifically, our LMI conditions enable 
the design of controllers not only for stabilization 
but also for $H_2$-control and $H_{\infty}$-control.

Third, the proposed approach can 
also handle the case where only noisy 
sampled data are available. 
This is achieved through the reconstruction-error analysis 
for continuous-time data.
The reconstructed data can be interpreted 
as the original continuous-time data 
corrupted by measurement noise.
Consequently, by incorporating reconstruction-error bounds, our synthesis-operator approach enables 
derivative-free data-driven control for continuous-time systems from noisy sampled data.

\subsubsection*{Organization}
The rest of this paper is structured as follows.
In Section~\ref{sec:data_consistent_systems}, 
we introduce the class of systems and data
considered in this study, and 
then characterize the set of systems consistent with the 
noisy data
in terms of synthesis operators. 
Section~\ref{sec:Data_driven_control}
is devoted to deriving necessary and sufficient LMI conditions under which the noisy data  are informative for stabilization, $H_2$-control, and $H_\infty$-control. 
In Section~\ref{sec:error_analysis}, 
we discuss the error associated with reconstructing continuous-time data from sampled data. In Section~\ref{sec:example}, we demonstrate the proposed method through a numerical example based on an aircraft model. Finally, Section~\ref{sec:conclusion} concludes the paper.

\subsubsection*{Notation}
Let $\mathbb{N}$ denote the set of positive integers.
For $N \in \mathbb{N}$, we use
$(a_k)_{k=1}^N$ to denote an $N$-tuple, where 
$a_1,\dots,a_N$ need not to be in the same set.
Let $\mathbb{S}^n$ denote the set of $n \times n$ 
real symmetric matrices.
Let $\mathbb{S}_+^n$ and 
$\mathbb{S}_{++}^n$
denote the sets of $n \times n$ real nonnegative definite matrices and $n \times n$ real positive definite matrices, respectively.
For $A \in \mathbb{S}^n$, we write $A\succ0$ and $A \succeq 0$ 
if $A$ is positive definite and nonnegative definite, respectively.
Similarly, we write $A\prec0$ and $A \preceq 0$ 
if $A$ is negative definite and nonpositive definite,
respectively.
For a matrix $A$, we denote by $A^{\top}$ and $A^+$ its transpose and Moore--Penrose inverse, respectively.
For $\Theta \in \mathbb{S}_+^n$,
we denote by $\Theta^{1/2}$ the nonnegative
definite square root of $\Theta$.
The $n \times n$ identity matrix and the $n \times m$
zero matrix are denoted by 
$I_n$ and $0_{n \times m}$,
respectively.

Let $a,b \in \mathbb{R}$ with $a < b$. We denote by $\Ls([a,b];\mathbb{R}^n)$
the space of square-integrable measurable functions 
from $[a,b]$ to $\mathbb{R}^n$.
We denote by $\Hso([a,b];\mathbb{R}^n)$
the space of all absolutely continuous functions $\phi\colon
[a,b] \to \mathbb{R}^n$ such that $\phi' \in \Ls([a,b];\mathbb{R}^n)$.
For brevity, we write
$\Ls[a,b] \coloneqq \Ls([a,b];\mathbb{R})$
and 
$\Hso[a,b] \coloneqq \Hso([a,b];\mathbb{R})$.
We define $\Hs[a,b]$ to be the subspace of $\Hso[a,b]$ consisting of all functions
$\phi$ satisfying $\phi(a)=\phi(b)=0$.
The inner product on $\Ls[a,b]$ 
is given by
\[
\langle f, g \rangle_{\Ls} = 
\int_a^b f(t) g(t)dt,\quad f,g \in \Ls[a,b],
\]
and
the inner product on 
$\Hs[a,b]$
is given by
\[
\langle \phi, \psi \rangle_{\Hs} = 
\int_a^b \phi'(t) \psi'(t)dt,\quad \phi,\psi \in \Hs[a,b].
\]

Let $X$ and $Y$ be Hilbert spaces.
We write $\mathcal{L}(X,Y)$ for the space of 
bounded linear operators from $X$ to $Y$.
For $T \in \mathcal{L}(X,Y)$,
its range and kernel 
are denoted by
$\Ran T$ and 
$\Ker T$, respectively.
The Hilbert space adjoint of $T$ is denoted by $T^*$.

\section{Systems consistent with noisy data}
\label{sec:data_consistent_systems}
In this section, we present an operator-theoretic approach
that relates noisy continuous-time data to the underlying system dynamics.
Following a description of the system and the data considered in this paper,
we introduce an operator-based data-embedding method in Section~\ref{sec:data_embedding}.
We then
characterize the set of systems consistent with the noisy data in Section~\ref{sec:characterization_data_consistent_systems}.
\subsection{Operator-based data embedding}
\label{sec:data_embedding}
We consider the continuous-time linear system
\begin{equation}
	\label{eq:system}
	x'(t)  = Ax(t) + Bu(t),\quad t \geq 0,
\end{equation}
where $x(t) \in \mathbb{R}^n$ 
and $u(t) \in \mathbb{R}^m$ are the state and the control input
at time $t \geq 0$, respectively.
Let the set $\Sigma_{n,m}$ of systems
be defined by
\[
\Sigma_{n,m} \coloneqq \{(A,B): A \in \mathbb{R}^{n\times n}
~\text{and}~
B \in \mathbb{R}^{n \times m} \}.
\]
Let $N \in \mathbb{N}$ and $\tau_1,\dots,\tau_{N} >0$.
Suppose that $N$ pairs of state-input
trajectories are available.
Let 
$(x_k,u_k)$ be 
the $k$-th state-input pair, where
$x_k\in \Hso([0,\tau_k];\mathbb{R}^n)$ and $u_k\in \Ls([0,\tau_k];\mathbb{R}^m)$.
We assume that
$x_k'$, $x_k$, and $u_k$ are corrupted by $p_k,q_k \in \Ls([0,\tau_k];\mathbb{R}^n)$ and $r_k \in  \Ls([0,\tau_k];\mathbb{R}^m)$, respectively.
More precisely, we consider the situation where
an unknown true system $(A_s,B_s) \in 
\Sigma_{n,m} $ satisfies
\begin{equation}
	\label{eq:data_consistent}
	(x_k'-p_k) = A_s(x_k-q_k) + B_s(u_k-r_k)
	\quad \text{a.e.~on $[0,\tau_k]$ for all $k=1,\dots,N$}.
\end{equation}
We refer to $(p_k,q_k,r_k)_{k=1}^{N}$ as the noise
in the state-input data $(x_k,u_k)_{k=1}^{N}$. We make the following remark concerning the noise
structure.

\begin{remark}[Noise structure]
	Suppose that the $k$-th 
	state-input pair $(x_k,u_k)$ is obtained from
	the system subject to process noise,
	measurement noise, and input disturbances.
	Let $q_k \in \Hso([0,\tau_k];\mathbb{R}^n)$ denote the additive measurement noise to the state. Then
	the true state derivative is given by 
	$x_k' - q_k'$. In the presence of process noise $v_k \in \Ls([0,\tau_k];\mathbb{R}^n)$ and
	input disturbance $r_k\in \Ls([0,\tau_k];\mathbb{R}^m)$, the 
	state-input data $(x_k,u_k)$ satisfy
	\[
	x_k' - q_k' = A_s(x_k - q_k) + B_s(u_k - r_k) + v_k
	\quad \text{a.e.~on $[0,\tau_k]$}.
	\] 
	The noise term $p_k$ in \eqref{eq:data_consistent} is given by $p_k = q_k' + v_k$. In this study, however, we do not restrict our framework to such a specific noise structure. Instead, we treat $p_k$ and $q_k$ as independent unknown signals in $\Ls([0,\tau_k];\mathbb{R}^n)$ as in
	the discrete-time case~\cite{Bisoffi2024, Kaminaga2025arXiv}.
	\hspace*{\fill} $\triangle$ 
\end{remark}
Our data-embedding method employs synthesis operators on
the $\Hs$-space introduced 
in \cite[Definition~2.1]{Wakaiki2025Cont}.

		\begin{definition}
			\label{def:synthesis_op}
			Let $a,b \in \mathbb{R}$ satisfy $a < b$, and let 
			$f \in \Ls ([a,b]; \mathbb{R}^n)$.
			Define
			the operators $F,F_{\sd} \in \mathcal{L}(\Hs  [a,b], \mathbb{R}^n)$  by
			\begin{align*}
				F\phi \coloneqq \int_a^b  \phi(t)f(t) dt
				\quad \text{and} \quad 
				F_{\sd}\phi  \coloneqq -\int_a^{b}  \phi '(t)f(t) dt 
			\end{align*}
			for $\phi  \in \Hs [a,b]$.
			We call $F$ the {\em synthesis operator associated with $f$}
			and  $F_{\sd}$ the {\em differentiated 
				synthesis operator associated with $f$}.
		\end{definition}
		
		We now define the synthesis operators used in the proposed method, which are associated with the state-input data $(x_k,u_k)$ and 
		the noise $(p_k,q_k,r_k)_{k=1}^{N}$.
		Let $\tau = (\tau_k)_{k=1}^{N}$ with $\tau_1,\dots,\tau_{N} >0$. To simplify the notation,
		we define 
		the data set $\Gamma_{\tau}$ and 
		the noise set $\Delta_{\tau}$ by
		\begin{align*}
			\Gamma_{\tau} &\coloneqq 
			\{ 
			(x_k,u_k)_{k=1}^{N}:
			x_k \in \Hso([0,\tau_k];\mathbb{R}^n)
			~\text{and}~
			u_k \in \Ls([0,\tau_k];\mathbb{R}^m)~\text{for all $k=1,\dots,N$}
			\}, \\
			\Delta_{\tau} &\coloneqq 
			\{ 
			(p_k,q_k,r_k)_{k=1}^{N}:
			p_k,q_k \in \Ls([0,\tau_k];\mathbb{R}^n)
			~\text{and}~
			r_k \in \Ls([0,\tau_k];\mathbb{R}^m)~\text{for all $k=1,\dots,N$}
			\}.
		\end{align*}
		We also denote by
		$\mathcal{H}_{\tau} $
		the orthogonal direct sum
		of $(\Hs[0,\tau_k])_{k=1}^{N}$; that is
		$\Ht$ is  defined by
		\begin{equation*}
			\Ht \coloneqq 
			\{
			(\phi_k)_{k=1}^{N}: \phi_k \in \Hs[0,\tau_k] \text{~for all $k=1,\dots,N$}
			\}
		\end{equation*}
		with the standard inner product $\langle \cdot , \cdot \rangle_{\Ht}$ given by the sum of the 
		componentwise $\Hs$-inner products.
		
		Let $(x_k,u_k)_{k=1}^{N} \in \Gamma_{\tau}$ and 
		$(p_k,q_k,r_k)_{k=1}^{N} \in \Delta_{\tau}$.
		Throughout this paper, we denote by $X_{\sd,k}$
		the differentiated synthesis operator associated with
		$x_k$.
		We also denote by $X_k$,
		$U_k$,
		$P_k$,
		$Q_k$, and 
		$R_k$
		the synthesis operators associated with 
		$x_k$,
		$u_k$,
		$p_k$,
		$q_k$, and 
		$r_k$,
		respectively.
		We define $X_{\sd} \in \mathcal{L}(\Ht, \mathbb{R}^n)$ by
		\[
		X_{\sd} \phi \coloneqq 
		\sum_{k=1}^{N} X_{\sd,k} \phi_k,\quad \phi = 
		(\phi_k)_{k=1}^{N} \in \Ht.
		\]
		Analogous definitions apply to the operators
		$X$, $U$, $P$, $Q$, and $R$.
		Note that integration by parts yields
		\begin{equation}
			\label{eq:Xid_int_by_parts}
			X_{\sd} \phi = 
			\sum_{k=1}^{N} \int_0^{\tau_k} \phi_k(t) x_k'(t) dt
		\end{equation}
		for all $\phi = (\phi_k)_{k=1}^{N}\in \Ht$.
		Finally, we define 
		$Z,W \in \mathcal{L}(\Ht, \mathbb{R}^{2n+m})$ by
		\begin{equation}
			\label{eq:ZW_def}
			Z\phi  \coloneqq  \begin{bmatrix}
				X_{\sd}\phi  \\ -X\phi  \\ -U\phi 
			\end{bmatrix} \quad \text{and} \quad 
			W\phi  \coloneqq \begin{bmatrix}
				P\phi  \\ -Q\phi  \\ -R\phi 
			\end{bmatrix}
			,\quad \phi  \in \Ht.
		\end{equation}
		The state-input data $(x_k,u_k)_{k=1}^{N}$ and the noise
		$(p_k,q_k,r_k)_{k=1}^{N}$ are embedded into the operators $Z$
		and $W$, respectively.
		Although $Z$ and $W$ are operators on the infinite-dimensional
		space $\Ht$, we can regard $ZZ^* $ and $WW^*$ as matrices
		of size $(2n+m) \times (2n+m)$ due to the finite-rank property of $Z$ and $W$.
		We call $ZZ^*$ the {\em data-associated Gramian}
		and $WW^*$ the {\em noise-associated Gramian}.
		To compute the matrices $ZZ^* $ and $WW^*$,
		observe first that
		\begin{align}
			ZZ^* &=  
			\sum_{k=1}^{N} 
			\begin{bmatrix}
				X_{\sd,k}X_{\sd,k}^* & -X_{\sd,k}X_k^* & -X_{\sd,k}U_k^*\\
				-X_kX_{\sd,k}^* & X_kX_k^* & X_kU_k^* \\
				-U_kX_{\sd,k}^* & U_kX_k^* & U_kU_k^*
			\end{bmatrix}, \label{eq:ZZ}\\
			WW^* &= 
			\sum_{k=1}^{N} 
			\begin{bmatrix}
				P_kP_k^* & -P_kQ_k^* & -P_kR_k^*\\
				-Q_kP_k^* & Q_kQ_k^* & Q_kR_k^* \\
				-R_kP_k^* & R_kQ_k^* & R_kR_k^*
			\end{bmatrix}.
			\label{eq:WW}
		\end{align}
		Then
		each block, such as $X_{\sd, k} X_k^*$,
		in \eqref{eq:ZZ} and \eqref{eq:WW} can be computed through 
		its integral representation developed in
		\cite[Lemma~2.3]{Wakaiki2025Cont}.
		Note that the integral representation
		does not depend on an element $\phi \in \Ht$.
		
		We
		characterize the data-consistency condition 
		\eqref{eq:data_consistent} in terms of synthesis operators.
		The following result 
		extends the noise-free characterization
		developed in \cite[Lemma~2.2]{Wakaiki2025Cont}
		to incorporate noise in $\Delta_{\tau}$.
		\begin{lemma}
			\label{lem:synthesis_rep}
			Let $(x_k,u_k)_{k=1}^{N} \in \Gamma_{\tau}$ and 
			$(p_k,q_k,r_k)_{k=1}^{N} \in \Delta_{\tau}$. Then
			the following statements are equivalent for a fixed system
			$(A,B) \in \Sigma_{n,m}$:
			\begin{enumerate}
				\renewcommand{\labelenumi}{\textup{(\roman{enumi})}}
				\item The differential equations in \eqref{eq:data_consistent} hold.
				\item The operator equation $\begin{bmatrix}
					I_n & A & B
				\end{bmatrix} (Z-W) = 
				0$ holds, where the operators
				$Z,W$ are 
				as in
				\eqref{eq:ZW_def}.
			\end{enumerate}
		\end{lemma}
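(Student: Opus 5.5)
We first compute the operator in (ii) explicitly. For $\phi=(\phi_k)_{k=1}^N\in\Ht$, the definition \eqref{eq:ZW_def} together with the integration-by-parts identity \eqref{eq:Xid_int_by_parts} gives
\[
\begin{bmatrix} I_n & A & B\end{bmatrix}(Z-W)\phi
= \sum_{k=1}^{N}\int_0^{\tau_k}\phi_k(t)\,g_k(t)\,dt,
\]
where
\[
g_k \coloneqq (x_k'-p_k) - A(x_k-q_k) - B(u_k-r_k)\in \Ls([0,\tau_k];\mathbb{R}^n).
\]
Note that $g_k$ is square integrable because $x_k\in\Hso$, $p_k,q_k\in\Ls$, and $u_k,r_k\in\Ls$. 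Condition (i) says exactly that $g_k=0$ a.e.\ on $[0,\tau_k]$ for every $k$. Condition (ii) says that $\sum_k\int_0^{\tau_k}\phi_k g_k\,dt=0$ for every $\phi\in\Ht$.

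For (i)$\Rightarrow$(ii), substitute $g_k=0$ into the displayed formula; the right-hand side vanishes for every $\phi$.

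For (ii)$\Rightarrow$(i), localize to each trajectory and then to each component. Fix an index $k$ and choose $\phi$ with $\phi_j=0$ for $j\neq k$. This gives
\[
\int_0^{\tau_k}\phi_k(t)\,g_k(t)\,dt=0\quad\text{for all }\phi_k\in\Hs[0,\tau_k].
\]
Looking at each scalar component $g_{k,i}$ of $g_k$, this means $g_{k,i}\in\Ls[0,\tau_k]$ is $\Ls$-orthogonal to all of $\Hs[0,\tau_k]$. The space $\Hs[0,\tau_k]$ contains $C_c^\infty(0,\tau_k)$, which is dense in $\Ls[0,\tau_k]$. Since $\psi\mapsto\langle\psi,g_{k,i}\rangle_{\Ls}$ is continuous on $\Ls$, it vanishes on the closure, so $g_{k,i}=0$ in $\Ls$, i.e.\ a.e. Doing this for every $i$ and $k$ yields \eqref{eq:data_consistent}.

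The only step that needs any care is the density argument; everything else is bookkeeping. In particular, the sign conventions in the differentiated synthesis operator have to be checked so that \eqref{eq:Xid_int_by_parts} produces $+x_k'$. They do, because the boundary terms vanish since $\phi_k(0)=\phi_k(\tau_k)=0$. This is the same argument as in the noise-free case \cite[Lemma~2.2]{Wakaiki2025Cont}, with $x_k',x_k,u_k$ replaced by the shifted signals $x_k'-p_k$, $x_k-q_k$, $u_k-r_k$.
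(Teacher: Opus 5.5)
Your proposal is correct and follows the paper's proof: the same expansion of $\begin{bmatrix} I_n & A & B\end{bmatrix}(Z-W)\phi$ via \eqref{eq:Xid_int_by_parts}, the immediate forward direction, and for the converse the localization to a single trajectory by setting $\phi_j=0$ for $j\neq k$. The only difference is that you spell out the final step, namely that $C_c^\infty(0,\tau_k)\subseteq \Hs[0,\tau_k]$ is dense in $\Ls[0,\tau_k]$, whereas the paper cites a standard property of test functions for it.
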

		
		\begin{proof}
			Let $\phi = (\phi_k)_{k=1}^{N}\in \Ht$.
			Using \eqref{eq:Xid_int_by_parts}, we obtain
			\begin{align}
				&\begin{bmatrix}
					I_n & A & B
				\end{bmatrix} Z \phi - 
				\begin{bmatrix}
					I_n & A & B
				\end{bmatrix} W\phi \notag \\
				&\qquad =
				\sum_{k=1}^{N}
				\int_0^{\tau_k} \phi_k(t)
				\big( (x_k'(t) - p_k(t)) - A (x_k(t)-q_k(t)) - 
				B (u_k(t) - r_k(t)) \big)dt.
				\label{eq:IABZ}
			\end{align}
			The implication (i) $\Rightarrow$ (ii) follows immediately from
			\eqref{eq:IABZ}.
			To show the converse implication, 
			let $k \in \{1,\dots, N\}$ be arbitrary, and let $\phi_j = 0$
			if $j \neq k$. Then \eqref{eq:IABZ} yields
			\[
			\int_0^{\tau_k} \phi_k(t)
			\big( (x_k'(t) - p_k(t)) - A (x_k(t)-q_k(t)) - 
			B (u_k(t) - r_k(t)) \big)dt = 0.
			\]
			Using the standard property of test functions (see, e.g., \cite[Proposition 13.2.2]{Tucsnak2009}),
			we obtain
			\[
			x_k'(t) - p_k(t) = A (x_k(t)-q_k(t)) + 
			B (u_k(t) - r_k(t))  
			\]
			for a.e.~$t \in [0,\tau_k]$.
			Since $k \in \{1,\dots, N\}$  is arbitrary, we conclude that 
			\eqref{eq:data_consistent} holds.
		\end{proof}

		\subsection{Characterization of the set of
			data-consistent systems}
		\label{sec:characterization_data_consistent_systems}
		Lemma~\ref{lem:synthesis_rep} shows that
		the noise effect on the data can be quantified 
		in terms of the operator $W$.
		For a noise-intensity matrix
		$\Theta \in \mathbb{S}_+^{2n+m}$, we define the noise class $\Delta_{\tau,\Theta}$ by
		\begin{equation}
			\label{eq:Delta_tau_Theta_def}
			\Delta_{\tau, \Theta}\coloneqq  
			\{
			(p_k,q_k,r_k)_{k=1}^{N} \in \Delta_{\tau}: WW^* \preceq \Theta
			\},
		\end{equation}
		where 
		the noise-associated Gramian $WW^*$ is given by \eqref{eq:WW}.
		For a scalar $c \geq 0$, 
		$WW^* \preceq c I_{2n+m}$ is equivalent to $\|W\| \leq \sqrt{c}$; see the appendix
		for an interpretation of 
		$\|W\|$ from the viewpoint of the frequency of the associated noise $(p_k,q_k,r_k)_{k=1}^{N}$.

		Using the noise class $\Delta_{\tau,\Theta}$, we also 
		define the set $\Sigma_{\mathfrak{D},\Theta}$ of systems by
		\[
		\Sigma_{\mathfrak{D},\Theta} \coloneqq 
		\{ (A,B) \in \Sigma_{n,m}:
		\text{there exists $(p_k,q_k,r_k)_{k=1}^{N} \in \Delta_{\tau,\Theta}$ such that 
			\eqref{eq:data_consistent} holds}
		\}
		\]
		for data $\mathfrak{D} = (x_k,u_k)_{k=1}^{N} \in \Gamma_{\tau}$
		and a noise-intensity matrix $\Theta \in \mathbb{S}_+^{2n+m}$.
		Systems in $\Sigma_{\mathfrak{D},\Theta}$ are 
		consistent with the data $\mathfrak{D}$ corrupted by the noise in the class
		$\Delta_{\tau,\Theta}$.
		We characterize the system set $\Sigma_{\mathfrak{D},\Theta}$
		in terms of a QMI involving the data-associated Gramian.
		To this end, we
		partition $\Theta \in \mathbb{S}_+^{2n+m}$ as
		\begin{equation}
			\label{eq:Theta_partition}
			\Theta
			=
			\begin{bmatrix}
				\Theta_{11} & \Theta_{12} \\
				\Theta_{12}^{\top} & \Theta_{22}
			\end{bmatrix}
			\quad \text{with $\Theta_{11} \in \mathbb{S}^{n} $ and 
				$\Theta_{22} \in \mathbb{S}^{n+m}$}.
		\end{equation}
		It is well known that
		$\Theta \succ 0$ if and only if
		$\Theta_{22} \succ 0$ and $\Theta_{11} - \Theta_{12}\Theta_{22}^{-1} \Theta_{12}^{\top} \succ 0$.
		The following characterization result uses
		a weaker condition than $\Theta \succ 0$, where
		the invertibility of $\Theta_{22}$ is not required.
		\begin{theorem}
			\label{thm:system_cond}
			Let
			$\mathfrak{D} = (x_k,u_k)_{k=1}^{N}
			\in \Gamma_{\tau}$, and suppose that 
			the noise-intensity matrix $\Theta \in \mathbb{S}_+^{2n+m}$
			partitioned as in
			\eqref{eq:Theta_partition} satisfies
			\begin{equation}
				\label{eq:Theta_cond}
				\Theta_{11} - \Theta_{12}\Theta_{22}^+ \Theta_{12}^{\top} \succ0.
			\end{equation}
			Then
			the following statements are equivalent for all $(A,B) \in \Sigma_{n,m}$:
			\begin{enumerate}
				\renewcommand{\labelenumi}{\textup{(\roman{enumi})}}
				\item $(A,B) \in \Sigma_{\mathfrak{D},\Theta}$.
				\item 
				$\begin{bmatrix}
					I_n \\ A^{\top} \\ B^{\top}
				\end{bmatrix}^{\top} 
				(\Theta - ZZ^*)
				\begin{bmatrix}
					I_n \\ A^{\top} \\ B^{\top}
				\end{bmatrix} \succeq 0
				$, where the
				data-associated Gramian
				$ZZ^* \in \mathbb{S}_+^{2n+m}$ is given by
				\eqref{eq:ZZ}.
			\end{enumerate}
		\end{theorem}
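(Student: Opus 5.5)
We prove the two implications separately. Throughout, write $M \coloneqq \begin{bmatrix} I_n & A & B\end{bmatrix}$, so that (ii) reads $M(\Theta - ZZ^*)M^{\top} \succeq 0$. Both directions rest on Lemma~\ref{lem:synthesis_rep}, which reduces \eqref{eq:data_consistent} to the operator identity $MZ = MW$.

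\emph{(i) $\Rightarrow$ (ii).} Take noise $(p_k,q_k,r_k)_{k=1}^N \in \Delta_{\tau,\Theta}$ for which \eqref{eq:data_consistent} holds. By Lemma~\ref{lem:synthesis_rep}, $MZ = MW$. Hence
\[
MZZ^*M^{\top} = MWW^*M^{\top} \preceq M\Theta M^{\top},
\]
where the inequality uses $WW^* \preceq \Theta$. This is (ii). Condition \eqref{eq:Theta_cond} is not needed in this direction.

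\emph{(ii) $\Rightarrow$ (i).} We must construct noise in $\Delta_{\tau,\Theta}$ whose operator $W$ satisfies $MW = MZ$. The difficulty is that $W$ must have the synthesis structure \eqref{eq:ZW_def}; an arbitrary finite-rank operator will not do. The plan is to take $W = KZ$ for a suitable constant matrix $K \in \mathbb{R}^{(2n+m)\times(2n+m)}$. By \eqref{eq:Xid_int_by_parts}, $Z$ is the (componentwise) synthesis operator associated with the $\Ls$-functions $z_k \coloneqq (x_k', -x_k, -u_k)$. Therefore $KZ$ is the synthesis operator associated with $Kz_k \in \Ls([0,\tau_k];\mathbb{R}^{2n+m})$. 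Splitting $Kz_k = (p_k, -q_k, -r_k)$ defines $p_k, q_k \in \Ls([0,\tau_k];\mathbb{R}^n)$ and $r_k \in \Ls([0,\tau_k];\mathbb{R}^m)$, and then $W = KZ$ has exactly the form \eqref{eq:ZW_def}. It remains to choose $K$ so that $MKZ = MZ$ and $KZZ^*K^{\top} \preceq \Theta$.

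\emph{Choice of $K$.} Set $S \coloneqq M\Theta M^{\top}$.
\begin{itemize}
\item[] \textbf{$S$ is invertible.} Since $\Theta \succeq 0$, we have $\Ran \Theta_{12}^{\top} \subseteq \Ran \Theta_{22}$. The generalized Schur-complement inequality then gives
\[
S = \Theta_{11} + \Theta_{12}[A\ B]^{\top} + [A\ B]\Theta_{12}^{\top} + [A\ B]\Theta_{22}[A\ B]^{\top} \succeq \Theta_{11} - \Theta_{12}\Theta_{22}^+\Theta_{12}^{\top} \succ 0,
\]
where the last step is \eqref{eq:Theta_cond}. Explicitly, $S$ minus the Schur complement equals $\bigl([A\ B]\Theta_{22}^{1/2} + \Theta_{12}(\Theta_{22}^{1/2})^+\bigr)\bigl(\cdots\bigr)^{\top} \succeq 0$.
\item[] \textbf{Definition and consistency.} Define $K \coloneqq \Theta M^{\top} S^{-1} M$. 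Then $MK = SS^{-1}M = M$, so $MW = MZ$.
\item[] \textbf{Noise bound.} Using (ii), i.e.\ $MZZ^*M^{\top} \preceq S$,
\[
WW^* = \Theta M^{\top} S^{-1} (MZZ^*M^{\top}) S^{-1} M\Theta \preceq \Theta M^{\top} S^{-1} M \Theta = \Theta^{1/2}\Pi\,\Theta^{1/2}.
\]
Here $\Pi \coloneqq \Theta^{1/2}M^{\top}S^{-1}M\Theta^{1/2}$ is an orthogonal projection, because $\Pi$ is symmetric and $\Pi^2 = \Pi$. Hence $\Pi \preceq I$, so $WW^* \preceq \Theta$, and the constructed noise lies in $\Delta_{\tau,\Theta}$.
\end{itemize}

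\emph{Conclusion.} Since $MW = MZ$ with $W$ of the form \eqref{eq:ZW_def}, Lemma~\ref{lem:synthesis_rep} yields \eqref{eq:data_consistent}. Hence $(A,B) \in \Sigma_{\mathfrak{D},\Theta}$.

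The step I expect to be the main obstacle is the structural requirement on $W$ in the direction (ii) $\Rightarrow$ (i). It is resolved by keeping $W$ in the form "matrix times $Z$" and using integration by parts to see that $Z$ itself is a genuine synthesis operator. The only other technical point is justifying the pseudoinverse Schur-complement bound for $S$ when $\Theta_{22}$ is singular.
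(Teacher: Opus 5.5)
Your proof is correct. It produces exactly the same noise as the paper, but the argument is organized differently.

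\textbf{The paper's route.} It follows the proof of Douglas' lemma. With $T_\Theta = M\Theta^{1/2}$, it defines an operator on $\Ran T_\Theta^{\top}$ by $T_\Theta^{\top}\mu \mapsto (MZ)^{*}\mu$ and extends it by the orthogonal projection onto $\Ran T_\Theta^{\top}$. It then shows this operator is a contraction. Only at the end does it compute the adjoint and see that the resulting $W$ is a constant matrix times $Z$. Condition \eqref{eq:Theta_cond} enters at that last stage, through Lemma~\ref{lem:inverse}, to show $\Ker T_\Theta^{\top}=\{0\}$.

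\textbf{Your route.} You build in the synthesis structure from the start with the ansatz $W = KZ$, so everything reduces to finite-dimensional matrix identities. You use \eqref{eq:Theta_cond} only to invert $S = T_\Theta T_\Theta^{\top}$; your Schur-complement bound is the same fact as $\Ker T_\Theta^{\top}=\{0\}$.

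\textbf{Why the two coincide.} Since $T_\Theta$ then has full row rank, $T_\Theta^{+} = \Theta^{1/2}M^{\top}S^{-1}$. Let $\Pi$ here denote the paper's projection onto $\Ran T_\Theta^{\top}$. It fixes $\Ran T_\Theta^{+} = \Ran T_\Theta^{\top}$, so the paper's noise operator $\Theta^{1/2}\Pi T_\Theta^{+}MZ$ is precisely your $\Theta M^{\top}S^{-1}MZ$.

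\textbf{Trade-offs.} Your version is shorter and gives the noise signals explicitly as $Kz_k$. It also avoids the well-definedness and adjoint computations. The paper's version makes the link to Douglas' factorization visible.

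\textbf{A bonus your formulation reveals.} Condition \eqref{eq:Theta_cond} can be dropped altogether. Replace $S^{-1}$ by $S^{+}$. Statement (ii) gives $\Ker S \subseteq \Ker\,(MZ)^{*}$, hence $SS^{+}MZ = MZ$. Moreover, $\Theta^{1/2}M^{\top}S^{+}M\Theta^{1/2}$ is still an orthogonal projection because $S^{+}SS^{+}=S^{+}$. So the equivalence holds for every $\Theta \succeq 0$. Equivalently, in the paper's Step~4 the $\nu$-term already vanishes by \eqref{eq:TZ_TT_estimate}, since $\|(MZ)^{*}\nu\| \le \|T_\Theta^{\top}\nu\| = 0$.
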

		
		The main challenge in Theorem~\ref{thm:system_cond} is to prove the implication (ii) $\Rightarrow$ (i).
		While the proof of this implication is 
		a variant of that for Douglas' lemma (see \cite[Theorem~1]{Douglas1966} and 
		\cite[Proposition~12.1.2]{Tucsnak2009}),
		a technical difficulty arises from
		the specific structure of the operator $W$
		in the definition of $\Delta_{\tau,\Theta}$.
		In our case, $W$ 
		is not an arbitrary bounded operator but
		is constrained to be a sum of synthesis operators.
		To deal with this difficulty, 
		the assumption \eqref{eq:Theta_cond} is used.
		
		Before proceeding to the proof of Theorem~\ref{thm:system_cond},
		we present the following preliminary result, which 
		gives a 
		condition equivalent to \eqref{eq:Theta_cond}.
		\begin{lemma}
			\label{lem:inverse}
			Let $\Theta \in \mathbb{S}_+^{j+\ell}$
			be partitioned 
			as 
			\begin{equation}
				\label{eq:Theta_partition_lemma}
				\Theta
				=
				\begin{bmatrix}
					\Theta_{11} & \Theta_{12} \\
					\Theta_{12}^{\top} & \Theta_{22}
				\end{bmatrix}
				\quad \text{with $\Theta_{11} \in \mathbb{S}^{j} $ and 
					$\Theta_{22} \in \mathbb{S}^{\ell}$}.
			\end{equation}
			Then 
			the following statements are equivalent:
			\begin{enumerate}
				\renewcommand{\labelenumi}{\textup{(\roman{enumi})}}
				\item $\Theta_{11} - \Theta_{12}\Theta_{22}^+ \Theta_{12}^{\top} \succ 0$.
				\item $\Ker \Theta \subseteq \Ran 
				\begin{bmatrix}
					0_{j\times \ell} \\ I_{\ell}
				\end{bmatrix}$.
			\end{enumerate}
		\end{lemma}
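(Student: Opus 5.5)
The plan is to use the generalized Schur complement for nonnegative definite matrices. Since $\Theta \succeq 0$, it is standard that $\Ran \Theta_{12}^{\top} \subseteq \Ran \Theta_{22}$, so $\Theta_{22}\Theta_{22}^+\Theta_{12}^{\top} = \Theta_{12}^{\top}$. Moreover, the Schur complement $S \coloneqq \Theta_{11} - \Theta_{12}\Theta_{22}^+\Theta_{12}^{\top}$ is nonnegative definite. This follows from the congruence
\[
\begin{bmatrix} I_j & -\Theta_{12}\Theta_{22}^+ \\ 0 & I_\ell \end{bmatrix}
\Theta
\begin{bmatrix} I_j & -\Theta_{12}\Theta_{22}^+ \\ 0 & I_\ell \end{bmatrix}^{\top}
=
\begin{bmatrix} S & 0 \\ 0 & \Theta_{22} \end{bmatrix}.
\]
Write $T$ for the invertible left factor. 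With $\Theta$ and $\Theta = T^{-1}\diag(S,\Theta_{22})T^{-\top}$ in hand, the kernel of $\Theta$ is $T^{\top}\Ker \diag(S,\Theta_{22})$. The whole argument reduces to describing this kernel.

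For (i) $\Rightarrow$ (ii), take $v = (v_1,v_2) \in \Ker\Theta$. Then $v^{\top}\Theta v = 0$. Setting $w = T^{-\top}v$, we get $w^{\top}\diag(S,\Theta_{22})w = 0$, so $w_1^{\top} S w_1 = 0$. Since $S\succ 0$, this gives $w_1 = 0$. Now $T^{-\top}$ has the block form $\begin{bmatrix} I_j & 0\\ \ast & I_\ell\end{bmatrix}$, because $T^{\top}$ is block lower triangular with identity diagonal blocks. Hence $w_1 = v_1$, so $v_1 = 0$ and $v \in \Ran\begin{bmatrix}0\\ I_\ell\end{bmatrix}$.

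For (ii) $\Rightarrow$ (i), argue by contraposition. Suppose $S$ is not positive definite. Since $S \succeq 0$, there is $w_1 \neq 0$ with $S w_1 = 0$. Set $w = (w_1, 0)$, which lies in $\Ker\diag(S,\Theta_{22})$. Then $v = T^{\top} w$ satisfies $\Theta v = T^{-1}\diag(S,\Theta_{22})w = 0$. Its first block is again $w_1 \neq 0$, so $v \notin \Ran\begin{bmatrix}0\\ I_\ell\end{bmatrix}$. Explicitly, $v = (w_1, -\Theta_{22}^+\Theta_{12}^{\top}w_1)$.

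The only step needing care is verifying the block-diagonalizing congruence. This rests on $\Theta_{22}\Theta_{22}^+\Theta_{12}^{\top} = \Theta_{12}^{\top}$, i.e. the range inclusion $\Ran\Theta_{12}^{\top}\subseteq\Ran\Theta_{22}$. To prove it, take $y\in\Ker\Theta_{22}$ and consider the vector $(sx,y)$. Positivity of $\Theta$ gives $2s\,x^{\top}\Theta_{12}y + s^2 x^{\top}\Theta_{11}x \ge 0$ for all $s$ and $x$, which forces $\Theta_{12}y = 0$. Hence $\Ker\Theta_{22}\subseteq\Ker\Theta_{12}$, and taking orthogonal complements gives the inclusion. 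The remaining steps are routine block multiplication.
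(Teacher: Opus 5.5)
Your argument is correct and is essentially the paper's proof. Both rest on the two generalized Schur-complement facts $\Theta_{11}-\Theta_{12}\Theta_{22}^+\Theta_{12}^{\top}\succeq 0$ and $\Theta_{22}\Theta_{22}^+\Theta_{12}^{\top}=\Theta_{12}^{\top}$, and both use the same kernel vector $(w_1,-\Theta_{22}^+\Theta_{12}^{\top}w_1)$ for (ii)$\Rightarrow$(i). The difference is packaging: you encode the block elimination in the congruence $T\Theta T^{\top}=\diag(S,\Theta_{22})$ and prove the two facts yourself, whereas the paper cites them from Bernstein and, for (i)$\Rightarrow$(ii), solves $\Theta_{12}^{\top}\mu_1+\Theta_{22}\mu_2=0$ via the Moore--Penrose general solution before invoking $\Ker\Theta_{22}\subseteq\Ker\Theta_{12}$.
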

		\begin{proof}
			The
			nonnegative definite matrix $\Theta$ partitioned as in
			\eqref{eq:Theta_partition_lemma}
			satisfies the following properties (see, e.g., 
			\cite[Proposition~10.2.5]{Bernstein2018}):
			\begin{align}
				&\Theta_{11} - \Theta_{12}\Theta_{22}^+ \Theta_{12}^{\top} \succeq 0,\label{eq:nn_prop1}\\
				&\Theta_{22}\Theta_{22}^+ \Theta_{12}^{\top} = \Theta_{12}^{\top}. \label{eq:nn_prop2}
			\end{align}

			First, we prove the implication (ii) $\Rightarrow $ (i).
			By \eqref{eq:nn_prop1}, it suffices to show that 
			$\Theta_{11} - \Theta_{12}\Theta_{22}^+ 
			\Theta_{12}^{\top}$ is invertible.
			Let $\mu_1 \in \mathbb{R}^j$ satisfy
			\begin{equation}
				\label{eq:Theta_first_element}
				(\Theta_{11} - \Theta_{12}\Theta_{22}^+ 
				\Theta_{12}^{\top})\mu_1 =0,
			\end{equation}
			and define $\mu_2 \coloneqq -\Theta_{22}^+ \Theta_{12}^{\top}\mu_1$.
			Then
			\begin{equation}
				\label{eq:Theta_mu}
				\Theta 
				\begin{bmatrix}
					\mu_1 \\ \mu_2
				\end{bmatrix} =
				\begin{bmatrix}
					\Theta_{11}\mu_1 - \Theta_{12}\Theta_{22}^+ \Theta_{12}^{\top}\mu_1 \\
					\Theta_{12}^{\top}\mu_1 - \Theta_{22}\Theta_{22}^+ \Theta_{12}^{\top}\mu_1
				\end{bmatrix}.
			\end{equation}
			It follows from \eqref{eq:nn_prop2} that
			\begin{equation}
				\label{eq:Theta_second_element}
				\Theta_{12}^{\top}\mu_1 - \Theta_{22}\Theta_{22}^+ \Theta_{12}^{\top}\mu_1 =
				\Theta_{12}^{\top}\mu_1 -  \Theta_{12}^{\top}\mu_1 = 0.
			\end{equation}
			Substituting \eqref{eq:Theta_first_element}
			and \eqref{eq:Theta_second_element} into
			\eqref{eq:Theta_mu}, we obtain
			\[
			\begin{bmatrix}
				\mu_1 \\ \mu_2
			\end{bmatrix} \in \Ker \Theta .
			\]
			This and statement~(ii) imply that $\mu_1 = 0$.
			Therefore, $\Theta_{11} - \Theta_{12}\Theta_{22}^+ 
			\Theta_{12}^{\top}$ is invertible.
			
			Next, we prove the implication (i) $\Rightarrow $ (ii).
			Let $\mu_1 \in \mathbb{R}^j$ and 
			$\mu_2 \in \mathbb{R}^{\ell}$ satisfy
			\begin{equation}
				\label{eq:KerTheta}
				\begin{bmatrix}
					\mu_1 \\ \mu_2
				\end{bmatrix} \in \Ker \Theta.
			\end{equation}
			It is enough to show that $\mu_1 = 0$.
			Observe first that 
			\eqref{eq:KerTheta} can be written as 
			\begin{equation}
				\label{eq:KerTheta2}
				\Theta_{11}\mu_1 + \Theta_{12}\mu_2 =0
				\quad \text{and} \quad 
				\Theta_{12}^{\top}\mu_1 + \Theta_{22}\mu_2 =0.
			\end{equation}
			From the second equation in \eqref{eq:KerTheta2},
			it follows that 
			\[
			\mu_2 = -\Theta_{22}^+ \Theta_{12}^{\top} \mu_1 + 
			\nu
			\]
			for some $\nu \in \Ker \Theta_{22}$; see, e.g.,
			\cite[Proposition 8.1.9]{Bernstein2018}.
			Substituting this into 
			the first equation in \eqref{eq:KerTheta2},
			we obtain
			\begin{equation}
				\label{eq:Theta_mu1_nu}
				(\Theta_{11} - \Theta_{12}\Theta_{22}^+ \Theta_{12}^{\top})\mu_1 + \Theta_{12}\nu = 0.
			\end{equation}
			Since \eqref{eq:nn_prop2} implies
			\[
			\Ker \Theta_{22} \subseteq \Ker \Theta_{12},
			\]
			we obtain $\Theta_{12}\nu = 0$.
			This and \eqref{eq:Theta_mu1_nu} yield
			$(\Theta_{11} - \Theta_{12}\Theta_{22}^+ \Theta_{12}^{\top})\mu_1 =0$.
			Since $\Theta_{11} - \Theta_{12}\Theta_{22}^+ \Theta_{12}^{\top}$ is invertible by statement~(i), 
			we conclude that $\mu_1 = 0$.
		\end{proof}
		
		We are now ready to prove Theorem~\ref{thm:system_cond}.
		\begin{proof}[Proof of Theorem~\ref{thm:system_cond}.]
			First, we prove the implication (i) $\Rightarrow$ (ii).
			Lemma~\ref{lem:synthesis_rep} shows that
			\begin{align}
				\label{eq:Theta_ZZ_WW}
				\begin{bmatrix}
					I_n \\ A^{\top} \\ B^{\top}
				\end{bmatrix}^{\top} 
				(\Theta 
				-
				ZZ^*)
				\begin{bmatrix}
					I_n \\ A^{\top} \\ B^{\top}
				\end{bmatrix} = 
				\begin{bmatrix}
					I_n \\ A^{\top} \\ B^{\top}
				\end{bmatrix}^{\top} 
				(\Theta 
				-
				WW^*)
				\begin{bmatrix}
					I_n \\ A^{\top} \\ B^{\top}
				\end{bmatrix} .
			\end{align}
			Since $(p_k,q_k,r_k)_{k=1}^{N} \in \Delta_{\tau,\Theta}$
			by statement~(i),
			we obtain
			$\Theta - WW^*
			\succeq 0$. From this inequality and \eqref{eq:Theta_ZZ_WW},
			it follows that statement~(ii) holds.
			
			Next, we prove the implication (ii) $\Rightarrow$ (i).
			To simplify the notation, define $T_{\Theta},T \in \mathbb{R}^{n \times (2n+m)}$ by
			\begin{equation}
				\label{eq:T_Theta_T}
				T_{\Theta} \coloneqq
				\begin{bmatrix}
					I_n & A & B
				\end{bmatrix} \Theta^{1/2}\quad \text{and} \quad 
				T \coloneqq
				\begin{bmatrix}
					I_n & A & B
				\end{bmatrix}.
			\end{equation}
			The proof is divided into four steps.
			In Step~1, we construct an operator 
			$S \in \mathcal{L}(\mathbb{R}^{2n+m}, \Ht)$ satisfying 
			\begin{equation}
				\label{eq:STZ}
				S T_{\Theta}^{\top} \mu = (TZ)^{*} \mu
				\quad \text{for all $\mu \in \mathbb{R}^{n}$}.
			\end{equation}
			In Step~2, we show that 
			\begin{equation}
				\label{eq:SS_bound}
				S^*S \preceq I_{2n+m}.
			\end{equation}
			In Steps~3 and 4, we use these results to complete the proof of the implication (ii) $\Rightarrow$ (i).

			\textit{Step~1.}
			By the definitions  of $T_{\Theta}$ 
			and $T$ in \eqref{eq:T_Theta_T},
			we obtain
			\[
			\begin{bmatrix}
				I_n \\ A^{\top} \\ B^{\top}
			\end{bmatrix}^{\top} 
			(\Theta 
			-
			ZZ^*)
			\begin{bmatrix}
				I_n \\ A^{\top} \\ B^{\top}
			\end{bmatrix} =
			T_{\Theta} T_{\Theta}^{\top} - (TZ)(TZ)^*.
			\]
			From statement~(ii), it follows that
			\[
			(TZ)(TZ)^* \preceq T_{\Theta}T_{\Theta}^{\top},
			\]
			which implies that 
			\begin{equation}
				\label{eq:TZ_TT_estimate}
				\|(TZ)^* \mu\|_{\Ht} \leq \|T_{\Theta}^{\top} \mu\|_{\mathbb{R}^{2n+m}}
			\end{equation}
			for all $\mu  \in \mathbb{R}^{n}$.
			Define the bounded linear operator 
			$S_0 \colon \Ran T_{\Theta}^{\top} \to 
			\Ran \,(TZ)^*$ by
			\begin{equation}
				\label{eq:S0_def}
				S_0 T_{\Theta}^{\top} \mu = (TZ)^* \mu,\quad \mu \in \mathbb{R}^{n}.
			\end{equation}
			Then $S_0$ is well-defined. Indeed, if 
			$\mu_1,\mu_2 \in \mathbb{R}^n$ satisfy
			$T_{\Theta}^{\top} \mu_1 = T_{\Theta}^{\top} \mu_2$, then
			the inequality \eqref{eq:TZ_TT_estimate} implies that
			\[
			\|
			(TZ)^* \mu_1 - (TZ)^* \mu_2
			\|_{\Ht}  = 
			\|
			(TZ)^* (\mu_1 -  \mu_2)
			\|_{\Ht} 
			\leq 
			\|T_{\Theta}^{\top} (\mu_1-\mu_2)\|_{\mathbb{R}^{2n+m}}  = 0.
			\]
			Define $S \in \mathcal{L}(\mathbb{R}^{2n+m}, \Ht)$ by
			\begin{equation}
				\label{eq:S_def}
				S \coloneqq S_0 \Pi ,
			\end{equation}
			where $\Pi $ is the the orthogonal projection
			onto $\Ran T_{\Theta}^{\top}$.
			Since
			$
			S T_{\Theta}^{\top} \mu =
			S_0 T_{\Theta}^{\top} \mu 
			$
			for all $\mu \in \mathbb{R}^{n}$,
			we deduce from \eqref{eq:S0_def} that 
			\eqref{eq:STZ} holds.

			\textit{Step~2.}
			Let $\eta  \in \mathbb{R}^{2n+m}$  be arbitrary, and define $\eta_1  \in \mathbb{R}^{2n+m}$ by
			\begin{equation}
				\label{eq:eta1_def}
				\eta_1 \coloneqq  \Pi  \eta.
			\end{equation}
			There exists $\mu_1 \in \mathbb{R}^{n}$ such that 
			\begin{equation}
				\label{eq:mu1_def}
				\eta_1 = T_{\Theta}^{\top} \mu_1.
			\end{equation}
			From \eqref{eq:S0_def}--\eqref{eq:mu1_def}, it follows that
			\begin{equation}
				\label{eq:S_eta_TZ}
				S\eta = 
				S_0 \eta_1 = S_0T_{\Theta}^{\top} \mu_1 = 
				(TZ)^{*} \mu_1.
			\end{equation}
			Combining this with
			\eqref{eq:TZ_TT_estimate}, we obtain
			\begin{align*}
				\|S\eta\|_{\Ht}  &=
				\|
				(TZ)^* \mu_1
				\|_{\Ht}  \\
				&\leq
				\|
				T_{\Theta}^{\top} \mu_1
				\|_{\mathbb{R}^{2n+m}}  \\
				&= \|\eta_1\|_{\mathbb{R}^{2n+m}} \\
				&\leq \|\eta\|_{\mathbb{R}^{2n+m}}. 
			\end{align*}
			Since $\eta \in \mathbb{R}^{2n+m}$ is arbitrary, we obtain 
			$\|S\| \leq 1$. Hence,
			\eqref{eq:SS_bound} holds.
			
			\textit{Step~3.}
			Define $S_{\Theta} \in \mathcal{L}(\mathbb{R}^{2n+m}, \Ht)$ by
			\[
			S_{\Theta} \coloneqq S \Theta^{1/2}.
			\]
			By \eqref{eq:SS_bound}, we have
			\begin{equation}
				\label{eq:noise_bound_prop}
				S_{\Theta}^* S_{\Theta} = 
				\Theta^{1/2} S^*S \Theta^{1/2} \preceq \Theta.
			\end{equation}
			Moreover, \eqref{eq:STZ} yields
			\[
			S_{\Theta} T^{\top}\mu = S \Theta^{1/2} T^{\top}\mu =
			S T_{\Theta}^{\top}\mu = (TZ)^* \mu
			\]
			for all $\mu \in \mathbb{R}^n$. This implies that 
			\begin{equation}
				\label{eq:data_consistency_prop}
				TS_{\Theta}^* = TZ.
			\end{equation}
			Therefore,
			the statement~(i) is proved
			if we can show that the synthesis operators $(P_k,Q_k,R_k)_{k=1}^{N}$ 
			associated with some
			$(p_k,q_k,r_k)_{k=1}^{N} \in \Delta_{\tau}$ satisfy
			\begin{equation}
				\label{eq:S_theta_rep}
				S_{\Theta}^* \phi = 
				\mathlarger{\sum}_{k=1}^{N}
				\begin{bmatrix}
					P_k \phi_k\\ -Q_k\phi_k \\ -R_k\phi_k
				\end{bmatrix}
			\end{equation}
			for all $\phi = (\phi_k)_{k=1}^{N} \in \Ht$.
			Indeed,
			$(p_k,q_k,r_k)_{k=1}^{N} \in \Delta_{\tau,\Theta}$
			follows from
			\eqref{eq:noise_bound_prop}, and
			\eqref{eq:data_consistent} holds by
			\eqref{eq:data_consistency_prop} and 
			Lemma~\ref{lem:synthesis_rep}.
			
			\textit{Step~4.}
			It remains to prove that \eqref{eq:S_theta_rep} 
			holds for some $(p_k,q_k,r_k)_{k=1}^{N} \in \Delta_{\tau}$.
			To this end, 
			we show that the adjoint $S^*$ can be written as 
			\begin{equation}
				\label{eq:adjoint_S_rep}
				S^* = \Pi T_{\Theta}^+  TZ .
			\end{equation}
			
			Let $\eta \in \mathbb{R}^{2n+m}$ and $\phi \in \Ht$ be arbitrary.
			Let $\eta_1\in \mathbb{R}^{2n+m}$ and $\mu_1 \in \mathbb{R}^{n}$ 
			be as in \eqref{eq:eta1_def} and 
			\eqref{eq:mu1_def}, respectively.
			Then \eqref{eq:S_eta_TZ} gives
			\begin{equation}
				\label{eq:S_inner_product}
				\langle S\eta ,\phi 
				\rangle_{\Ht} = \langle \mu_1, TZ\phi 
				\rangle_{\mathbb{R}^n}.
			\end{equation}
			Since $(T_{\Theta}^+)^{\top} = (T_{\Theta}^{\top})^+$,
			we deduce from \eqref{eq:mu1_def} that 
			\begin{equation}
				\label{eq:T_eta_mu}
				\mu_1 = (T_{\Theta}^+)^{\top}\eta_1 + \nu,
			\end{equation}
			for some $\nu \in \Ker \,(T_\Theta^{\top})$; see, e.g.,
			\cite[Proposition 8.1.9]{Bernstein2018}.
			From \eqref{eq:S_inner_product} and \eqref{eq:T_eta_mu},
			it follows that
			\begin{align}
				\label{eq:Seta_eta1_nu}
				\langle S\eta,\phi 
				\rangle_{\Ht} 
				=
				\langle 
				\eta_1, T_{\Theta}^+ TZ \phi 
				\rangle_{\mathbb{R}^n} + 
				\langle 
				\nu, TZ \phi 
				\rangle_{\mathbb{R}^n} .
			\end{align}
			Since $\Theta T^{\top} \nu = 0$, 
			Lemma~\ref{lem:inverse} together with \eqref{eq:Theta_cond} 
			shows that 
			\[
			T^{\top} \nu = 
			\begin{bmatrix}
				I_n \\ A^{\top} \\ B^{\top}
			\end{bmatrix}\nu \in \Ran 
			\begin{bmatrix}
				0_{n\times (n+m)} \\ I_{n+m}
			\end{bmatrix}.
			\] 
			This implies that $\nu = 0$.
			By \eqref{eq:Seta_eta1_nu} and 
			\eqref{eq:eta1_def}, we therefore have
			\begin{align*}	
				\langle S\eta,\phi 
				\rangle_{\Ht} &=
				\langle 
				\eta_1, T_{\Theta}^+ TZ \phi 
				\rangle_{\mathbb{R}^{2n+m}} \\
				&= \langle 
				\Pi  \eta, T_{\Theta}^+  TZ \phi 
				\rangle_{\mathbb{R}^{2n+m}} \\
				&= \langle 
				\eta, \Pi T_{\Theta}^+  TZ \phi 
				\rangle_{\mathbb{R}^{2n+m}}.
			\end{align*}
			Since 
			$\eta \in \mathbb{R}^{2n+m}$ and $\phi \in \Ht$ are arbitrary, \eqref{eq:adjoint_S_rep} holds.
			
			Using \eqref{eq:Xid_int_by_parts},  we obtain
			\[
			Z\phi = 
			\mathlarger{\sum}_{k=1}^{N}
			\mathlarger{\int}_0^{\tau_k} \phi_k (t)
			\begin{bmatrix}
				x_k'(t) \\ -x_k(t) \\ -u_k(t)
			\end{bmatrix} dt
			\]
			for all $\phi = (\phi_k)_{k=1}^{N} \in \Ht$.
			By \eqref{eq:adjoint_S_rep}, we obtain
			\[
			S_{\Theta}^* \phi = 
			\mathlarger{\sum}_{k=1}^{N}
			\mathlarger{\int}_0^{\tau_k} \phi_k (t)
			\Theta^{1/2}\Pi T_{\Theta}^+  T
			\begin{bmatrix}
				x_k'(t) \\ -x_k(t) \\ -u_k(t)
			\end{bmatrix} dt.
			\]
			Since
			\[
			\mathlarger{\sum}_{k=1}^{N}
			\begin{bmatrix}
				P_k \phi_k\\ -Q_k\phi_k \\ -R_k\phi_k
			\end{bmatrix}
			=
			\mathlarger{\sum}_{k=1}^{N}
			\mathlarger{\int}_0^{\tau_k} \phi_k (t)
			\begin{bmatrix}
				p_k(t) \\ -q_k(t) \\ -r_k(t)
			\end{bmatrix} dt,
			\]
			the assertion
			\eqref{eq:S_theta_rep} holds if we define
			$(p_k,q_k,r_k)_{k=1}^{N} \in \Delta_{\tau}$
			by
			\[
			\begin{bmatrix}
				p_k(t) \\ q_k(t) \\ r_k(t)
			\end{bmatrix} \coloneqq 
			\begin{bmatrix}
				I_n & 0_{n\times n} & 0_{n\times m} \\
				0_{n\times n} & -I_n & 0_{n\times m} \\
				0_{m\times n} & 0_{m\times n} & -I_m
			\end{bmatrix} 
			\Theta^{1/2}\Pi T_{\Theta}^+  T
			\begin{bmatrix}
				x_k'(t) \\ -x_k(t) \\ -u_k(t)
			\end{bmatrix}
			\]
			for $t \in [0,\tau_k]$ and $k=1,\dots,N$.
		\end{proof}

		\section{Data-driven control}
		\label{sec:Data_driven_control}
		This section is devoted to developing data-driven control methods based on the characterization of
		the set of all data-consistent systems.
		First, we state the standing assumption for this section and verify that the
		hypotheses required to apply
		the strict matrix $S$-lemma hold in our setting.
		Second, we establish a necessary and sufficient LMI
		condition under which the noisy data are informative for
		quadratic stabilization.
		Finally, we extend this result to 
		$H_2$-control and $H_{\infty}$-control.
		
		We make the following assumption on
		the state-input data $(x_k,u_k)_{k=1}^{N} \in \Gamma_{\tau}$ and
		the noise-intensity matrix $\Theta \in \mathbb{S}_+^{2n+m}$.
		\begin{assumption}
			\label{assump:Theta}
			For $k=1,\dots,N$,
			let  $X_k$ and
			$U_k$ be
			the synthesis operators associated with 
			the $k$-th state
			$x_k \in \Hso([0,\tau_k];\mathbb{R}^n)$ and the $k$-th input $u_k \in \Ls([0,\tau_k];\mathbb{R}^m)$, respectively. 
			Let $\Theta \in \mathbb{S}_+^{2n+m}$ be
			partitioned as in
			\eqref{eq:Theta_partition}.
			Then
			\begin{enumerate}
				\renewcommand{\labelenumi}{\textup{\alph{enumi})}}
				\item There exist a system
				$(A_s,B_s) \in \Sigma_{n,m}$ and
				noise sequences
				$(p_k,q_k,r_k)_{k=1}^{N} \in \Delta_{\tau,\Theta}$ such that
				for all $k=1,\dots,N$
				\[
				x_k'-p_k = A_s(x_k-q_k) + B_s(u_k-r_k)
				\quad \text{a.e.~on $[0,\tau_k]$}.
				\]
				\item $\displaystyle 
				\Theta_{22} \prec \sum_{k=1}^{N} \begin{bmatrix}
					X_k X_k^* & X_k  U_k^* \\
					U_k X_k^* & U_k  U_k^* 
				\end{bmatrix}.
				$
				\item $\Theta_{11} - \Theta_{12}\Theta_{22}^+ \Theta_{12}^{\top} \succ 0$.
			\end{enumerate}
		\end{assumption}
		
		Condition~a) implies that 
		the data $(x_k,u_k)_{k=1}^{N} $
		are generated from the true system $(A_s,B_s)$ subject to
		some noise in the class $\Delta_{\tau,\Theta}$.
		Condition~b) requires the data $(x_k,u_k)_{k=1}^{N} $ to be large compared with
		the noise $(q_k,r_k)_{k=1}^{N}$;
		see also the definition 
		of the noise class $\Delta_{\tau,\Theta}$ in
		\eqref{eq:Delta_tau_Theta_def}.
		A similar condition was considered for discrete-time systems in \cite[Assumption~1]{Bisoffi2024}.
		In the noise-free case when
		$\Theta$ is the zero matrix, condition~b) is equivalent to
		informativity for system identification; see \cite[Proposition~3.2]{Wakaiki2025Cont}
		and \cite[Proposition~5.1]{Wakaiki2026IOdata}.
		Condition~c) is a technical requirement to apply Theorem~\ref{thm:system_cond}.
		
		To characterize data informativity,
		we employ the {\em strict matrix $S$-lemma} 
		presented in \cite[Theorem~4.10]{Waarde2023SIAM}.
		We introduce the matrix partitions and the definitions
		required for its statement.
		Let $\mathcal{M},\mathcal{N}  \in 
		\mathbb{S}^{j+\ell}$ be given. We partition these matrices as 
		\begin{align}
			\label{eq:M_partition}
			\mathcal{M} &=
			\begin{bmatrix}
				\mathcal{M}_{11} & \mathcal{M}_{12} \\
				\mathcal{M}_{12}^{\top} & \mathcal{M}_{22}
			\end{bmatrix}\quad \text{with $\mathcal{M}_{11} \in \mathbb{S}^{j} $ and 
				$\mathcal{M}_{22}\in \mathbb{S}^{\ell}$},\\
			\label{eq:N_partition}
			\mathcal{N} &=
			\begin{bmatrix}
				\mathcal{N}_{11} & \mathcal{N}_{12} \\
				\mathcal{N}_{12}^{\top} & \mathcal{N}_{22}
			\end{bmatrix}\quad \text{with $\mathcal{N}_{11} \in \mathbb{S}^{j} $ and 
				$\mathcal{N}_{22}\in \mathbb{S}^{\ell}$}.
		\end{align}
		Using these partitions,
		we define
		the matrix sets $\mathcal{Z}_{j,\ell}(\mathcal{N})$
		and $\mathcal{Z}_{j,\ell}^+(\mathcal{M})$
		by
		\begin{align}
			\label{eq:ZN_def}
			\mathcal{Z}_{j,\ell}(\mathcal{N}) &\coloneqq 
			\left\{
			E \in \mathbb{R}^{\ell \times j}:
			\begin{bmatrix}
				I_j \\ E
			\end{bmatrix}^{\top}
			\mathcal{N}
			\begin{bmatrix}
				I_j \\E
			\end{bmatrix} \succeq  0
			\right\}, \\
			\label{eq:ZM_def}
			\mathcal{Z}_{j,\ell}^+(\mathcal{M}) &\coloneqq 
			\left\{
			E \in \mathbb{R}^{\ell \times j} :
			\begin{bmatrix}
				I_j \\ E
			\end{bmatrix}^{\top}
			\mathcal{M}
			\begin{bmatrix}
				I_j \\ E
			\end{bmatrix}\succ 0
			\right\}.
		\end{align}
		The strict matrix $S$-lemma is stated as follows.
		\begin{lemma}
			\label{lem:S_lemma}
			Let $\mathcal{M},\mathcal{N} \in 
			\mathbb{S}^{j+\ell}$ be partitioned as in \eqref{eq:M_partition}
			and \eqref{eq:N_partition}, respectively. Assume that $\mathcal{N}$ satisfies
			\begin{equation}
				\label{eq:S_Lemma_cond}
				\mathcal{N}_{22} \prec 0\quad \text{and}
				\quad 
				\mathcal{N}_{11} - 
				\mathcal{N}_{12}\mathcal{N}_{22}^{-1}\mathcal{N}_{12}^{\top}  \succeq  0.
			\end{equation}
			Then $\mathcal{Z}_{j,\ell}(\mathcal{N}) \subseteq
			\mathcal{Z}_{j,\ell}^{+}(\mathcal{M})$
			if and only if 
			there exists a scalar $\alpha \geq 0$
			such that
			$\mathcal{M} - \alpha \mathcal{N} \succ 0$.
		\end{lemma}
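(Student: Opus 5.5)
The ``if'' direction is a one-line computation. Suppose $\mathcal{M}-\alpha\mathcal{N}\succ0$ for some $\alpha\ge 0$, and let $E\in\mathcal{Z}_{j,\ell}(\mathcal{N})$. The matrix $\begin{bmatrix} I_j\\ E\end{bmatrix}$ has full column rank, so
\[
\begin{bmatrix} I_j\\ E\end{bmatrix}^{\top}\mathcal{M}\begin{bmatrix} I_j\\ E\end{bmatrix}
=\begin{bmatrix} I_j\\ E\end{bmatrix}^{\top}(\mathcal{M}-\alpha\mathcal{N})\begin{bmatrix} I_j\\ E\end{bmatrix}
+\alpha\begin{bmatrix} I_j\\ E\end{bmatrix}^{\top}\mathcal{N}\begin{bmatrix} I_j\\ E\end{bmatrix}\succ0 .
\]
Hence $E\in\mathcal{Z}^+_{j,\ell}(\mathcal{M})$. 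All the work is in the ``only if'' direction, which I would reduce to the non-strict matrix $S$-lemma using compactness.

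\emph{Step 1 (structure of $\mathcal{Z}_{j,\ell}(\mathcal{N})$).} Complete the square with respect to $E$ using $\mathcal{N}_{22}\prec0$. This gives
\[
\begin{bmatrix} I_j\\ E\end{bmatrix}^{\top}\mathcal{N}\begin{bmatrix} I_j\\ E\end{bmatrix}
=\mathcal{N}_{11}-\mathcal{N}_{12}\mathcal{N}_{22}^{-1}\mathcal{N}_{12}^{\top}+(E-E_0)^{\top}\mathcal{N}_{22}(E-E_0),
\qquad E_0\coloneqq-\mathcal{N}_{22}^{-1}\mathcal{N}_{12}^{\top}.
\]
Consequently $E_0\in\mathcal{Z}_{j,\ell}(\mathcal{N})$, so the set is nonempty. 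It is also convex, and it is compact because $\mathcal{N}_{22}\prec0$ forces $\|E-E_0\|$ to be bounded by a constant depending only on $\mathcal{N}$. Under the hypothesis $\mathcal{Z}_{j,\ell}(\mathcal{N})\subseteq\mathcal{Z}^+_{j,\ell}(\mathcal{M})$, the continuous function $E\mapsto\lambda_{\min}\bigl(\begin{bmatrix} I_j\\ E\end{bmatrix}^{\top}\mathcal{M}\begin{bmatrix} I_j\\ E\end{bmatrix}\bigr)$ is positive on this compact set, so it has a positive minimum. Since $\|E\|$ is also bounded there, I can choose $\varepsilon>0$ with
\[
\begin{bmatrix} I_j\\ E\end{bmatrix}^{\top}(\mathcal{M}-\varepsilon I_{j+\ell})\begin{bmatrix} I_j\\ E\end{bmatrix}\succeq0
\quad\text{for all } E\in\mathcal{Z}_{j,\ell}(\mathcal{N}).
\]

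\emph{Step 2 (Slater condition by perturbation).} The non-strict matrix $S$-lemma needs some $\bar E$ with $\begin{bmatrix} I_j\\ \bar E\end{bmatrix}^{\top}\mathcal{N}\begin{bmatrix} I_j\\ \bar E\end{bmatrix}\succ0$. This can fail when the Schur complement $\mathcal{N}_{11}-\mathcal{N}_{12}\mathcal{N}_{22}^{-1}\mathcal{N}_{12}^{\top}$ is singular, and I expect this to be the main obstacle. My first attempt is to replace $\mathcal{N}$ by
\[
\mathcal{N}_\delta\coloneqq\mathcal{N}+\delta\begin{bmatrix} I_j&0\\0&0\end{bmatrix},\qquad \delta>0 .
\]
Its Schur complement is positive definite, so $E_0$ is a Slater point for $\mathcal{N}_\delta$. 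The sets $\mathcal{Z}_{j,\ell}(\mathcal{N}_\delta)$ are compact and decrease to $\mathcal{Z}_{j,\ell}(\mathcal{N})$ as $\delta\downarrow0$. Combining a finite-intersection argument with Step~1, but using $\varepsilon/2$ in place of $\varepsilon$, I then get $\mathcal{Z}_{j,\ell}(\mathcal{N}_\delta)\subseteq\{E:\begin{bmatrix} I_j\\ E\end{bmatrix}^{\top}(\mathcal{M}-\tfrac{\varepsilon}{2}I_{j+\ell})\begin{bmatrix} I_j\\ E\end{bmatrix}\succeq0\}$ for all sufficiently small $\delta$.

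\emph{Step 3 (apply the non-strict $S$-lemma and conclude).} Applying the non-strict matrix $S$-lemma to $\mathcal{M}-\tfrac{\varepsilon}{2}I_{j+\ell}$ and $\mathcal{N}_\delta$ gives $\alpha\ge0$ with
\[
\mathcal{M}-\tfrac{\varepsilon}{2}I_{j+\ell}-\alpha\mathcal{N}_\delta\succeq0 .
\]
Therefore
\[
\mathcal{M}-\alpha\mathcal{N}\succeq\tfrac{\varepsilon}{2}I_{j+\ell}+\alpha\delta\begin{bmatrix} I_j&0\\0&0\end{bmatrix}\succ0,
\]
which is the desired conclusion.

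If the non-strict matrix $S$-lemma is not taken as known, I would prove it by duality. Suppose no such $\alpha$ exists. A separating hyperplane then gives $Y\succeq0$, $Y\ne0$, with $\operatorname{trace}(Y\mathcal{M}')<0$ and $\operatorname{trace}(Y\mathcal{N}_\delta)\ge0$, where $\mathcal{M}'\coloneqq\mathcal{M}-\tfrac{\varepsilon}{2}I_{j+\ell}$. Writing $Y=\begin{bmatrix} G\\ H\end{bmatrix}\begin{bmatrix} G\\ H\end{bmatrix}^{\top}$, the Slater point and $\mathcal{N}_{22}\prec0$ should let me modify $Y$ so that the top factor $G$ is invertible. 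Setting $E=HG^{-1}$ then produces an element of $\mathcal{Z}_{j,\ell}(\mathcal{N}_\delta)$ that violates the inclusion, which is the contradiction.
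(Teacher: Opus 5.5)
Your main argument is correct. It necessarily differs from the paper, which gives no proof of this lemma: it imports the strict matrix $S$-lemma from \cite[Theorem~4.10]{Waarde2023SIAM}, where strictness is obtained inside a general theory of QMI solution sets, with no Slater-type point required. You instead bootstrap from the older non-strict matrix $S$-lemma (e.g.\ \cite{Waarde2022TAC}), which does need a Slater point, and your bridging steps hold up:
\begin{itemize}
\item Since $\mathcal{N}_{22}\prec 0$, the trace of $(E-E_0)^{\top}(-\mathcal{N}_{22})(E-E_0)$ is bounded by the trace of the Schur complement. Hence $\mathcal{Z}_{j,\ell}(\mathcal{N})$ is compact, which gives the uniform margin $\varepsilon$.
\item The sets $\mathcal{Z}_{j,\ell}(\mathcal{N}_\delta)$ are compact and nested, with intersection $\mathcal{Z}_{j,\ell}(\mathcal{N})$. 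So for small $\delta$ they lie in the open set where $\begin{bmatrix} I_j\\ E\end{bmatrix}^{\top}(\mathcal{M}-\tfrac{\varepsilon}{2}I_{j+\ell})\begin{bmatrix} I_j\\ E\end{bmatrix}\succ 0$, and $E_0$ is a Slater point for $\mathcal{N}_\delta$.
\item The leftover terms $\tfrac{\varepsilon}{2}I_{j+\ell}+\alpha\delta\begin{bmatrix} I_j & 0\\ 0 & 0\end{bmatrix}$ restore strictness.
\end{itemize}
Your route makes it transparent exactly where $\mathcal{N}_{22}\prec 0$ is used. But it depends on $\mathcal{Z}_{j,\ell}(\mathcal{N})$ being bounded, so it would not survive if $\mathcal{N}_{22}$ were only semidefinite. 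The citation is shorter and rests on machinery designed to handle that degenerate case as well.

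One caveat concerns your optional duality sketch for the non-strict lemma. With $Y=\begin{bmatrix} G\\ H\end{bmatrix}\begin{bmatrix} G\\ H\end{bmatrix}^{\top}$ and $E=HG^{-1}$, the inequality $\trace(Y\mathcal{N}_\delta)\ge 0$ only says that $\begin{bmatrix} G\\ H\end{bmatrix}^{\top}\mathcal{N}_\delta\begin{bmatrix} G\\ H\end{bmatrix}$ has nonnegative trace, not that it is nonnegative definite. So $E\in\mathcal{Z}_{j,\ell}(\mathcal{N}_\delta)$ does not follow without an additional rank-one decomposition argument.

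A cleaner self-contained proof reduces to the classical vector $S$-lemma. Set $w\coloneqq v_2-E_0v_1$ and $S_\delta\coloneqq\mathcal{N}_{11}+\delta I_j-\mathcal{N}_{12}\mathcal{N}_{22}^{-1}\mathcal{N}_{12}^{\top}\succ 0$. Then for $v=\begin{bmatrix} v_1\\ v_2\end{bmatrix}$,
\[
v^{\top}\mathcal{N}_\delta v=v_1^{\top}S_\delta v_1+w^{\top}\mathcal{N}_{22}w .
\]
Suppose $v^{\top}\mathcal{N}_\delta v\ge 0$. If $v_1=0$, this forces $v=0$. Otherwise take
\[
E\coloneqq E_0+\frac{w v_1^{\top}S_\delta}{v_1^{\top}S_\delta v_1}.
\]
Then $Ev_1=v_2$, and Cauchy--Schwarz in the $S_\delta$-inner product gives $E\in\mathcal{Z}_{j,\ell}(\mathcal{N}_\delta)$. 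Hence $v^{\top}\mathcal{M}'v=v_1^{\top}\begin{bmatrix} I_j\\ E\end{bmatrix}^{\top}\mathcal{M}'\begin{bmatrix} I_j\\ E\end{bmatrix}v_1\ge 0$. The vector $S$-lemma, with Slater point $\begin{bmatrix} v_1\\ E_0v_1\end{bmatrix}$ for any $v_1\neq 0$, then yields $\alpha\ge 0$ with $\mathcal{M}'-\alpha\mathcal{N}_\delta\succeq 0$.
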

		
		To apply the strict matrix $S$-lemma, we show that 
		the conditions given in 
		\eqref{eq:S_Lemma_cond} hold under 
		Assumption~\ref{assump:Theta}.
		In our setting, the matrix
		$\mathcal{N} \in 
		\mathbb{S}^{2n+m}$ is defined by
		\begin{align}
			\label{eq:N_def}
			\mathcal{N} &\coloneqq
			\Theta - ZZ^*,
		\end{align}
		where the data-associated Gramian $ZZ^*\in 
		\mathbb{S}_+^{2n+m}$ is given by \eqref{eq:ZZ}.
		\begin{lemma}
			\label{lem:nec_cond_for_N}
			Suppose that the data
			$\mathfrak{D} = (x_k,u_k)_{k=1}^{N}
			\in \Gamma_{\tau}$ and the noise-intensity matrix $\Theta \in \mathbb{S}_+^{2n+m}$
			satisfy
			Assumption~\ref{assump:Theta}.
			Let $\mathcal{N} \in \mathbb{S}^{2n+m}$ 
			be defined by \eqref{eq:N_def}
			and be partitioned as in \eqref{eq:N_partition} with
			$j=n$ and $\ell = n+m$.
			Then the conditions given in \eqref{eq:S_Lemma_cond} hold.
		\end{lemma}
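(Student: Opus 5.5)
Both conditions in \eqref{eq:S_Lemma_cond} can be checked directly from the partition of $\mathcal{N} = \Theta - ZZ^*$ in \eqref{eq:N_def}. By \eqref{eq:ZZ}, the lower-right $(n+m)\times(n+m)$ block of $ZZ^*$ is exactly $\sum_{k}\begin{bmatrix} X_kX_k^* & X_kU_k^*\\ U_kX_k^* & U_kU_k^*\end{bmatrix}$. Hence
\[
\mathcal{N}_{22} = \Theta_{22} - \sum_{k=1}^{N} \begin{bmatrix} X_kX_k^* & X_kU_k^*\\ U_kX_k^* & U_kU_k^*\end{bmatrix},
\]
and condition~b) of Assumption~\ref{assump:Theta} gives $\mathcal{N}_{22}\prec 0$ at once. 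In particular $\mathcal{N}_{22}$ is invertible, so the Schur complement in \eqref{eq:S_Lemma_cond} is well defined.

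For the second condition I will use the standard fact that, when $\mathcal{N}_{22}\prec 0$, the Schur complement is the maximum of the quadratic form in the second variable:
\[
\mathcal{N}_{11}-\mathcal{N}_{12}\mathcal{N}_{22}^{-1}\mathcal{N}_{12}^\top = \max_{E}\begin{bmatrix} I_n\\ E\end{bmatrix}^\top \mathcal{N}\begin{bmatrix} I_n\\ E\end{bmatrix}
\]
in the Loewner order. The maximum is attained at $E_0=-\mathcal{N}_{22}^{-1}\mathcal{N}_{12}^\top$, and completing the square gives
\[
\begin{bmatrix} I\\ E\end{bmatrix}^\top\mathcal{N}\begin{bmatrix} I\\ E\end{bmatrix} = \big(\mathcal{N}_{11}-\mathcal{N}_{12}\mathcal{N}_{22}^{-1}\mathcal{N}_{12}^\top\big) + (E-E_0)^\top\mathcal{N}_{22}(E-E_0).
\]
Since the last term is $\preceq 0$, it suffices to exhibit a single $E$ for which the quadratic form is $\succeq 0$, that is, a single element of $\mathcal{Z}_{n,n+m}(\mathcal{N})$.

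Such an $E$ comes from the true system. By condition~a), $(A_s,B_s)\in\Sigma_{\mathfrak{D},\Theta}$. Condition~c) is exactly \eqref{eq:Theta_cond}, so Theorem~\ref{thm:system_cond} applies; its implication (i)$\Rightarrow$(ii) gives that $E=\begin{bmatrix} A_s & B_s\end{bmatrix}^\top$ satisfies the QMI with $\Theta-ZZ^*=\mathcal{N}$. Actually only the easy direction of the theorem is needed here: by Lemma~\ref{lem:synthesis_rep} the form equals the same expression with $\Theta-WW^*\succeq 0$, so condition~c) is not really essential for this step. Combining this with the completing-the-square identity gives $\mathcal{N}_{11}-\mathcal{N}_{12}\mathcal{N}_{22}^{-1}\mathcal{N}_{12}^\top \succeq 0$.

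The only point needing care is the completing-the-square identity. It is routine: expand the right-hand side using $E_0^\top\mathcal{N}_{22}E_0 = \mathcal{N}_{12}\mathcal{N}_{22}^{-1}\mathcal{N}_{12}^\top$ and $E_0^\top\mathcal{N}_{22}E = -\mathcal{N}_{12}E$, and compare with the left-hand side. I therefore expect no real obstacle.
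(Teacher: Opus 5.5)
Your proof is correct and is essentially the paper's argument. You get $\mathcal{N}_{22}\prec 0$ directly from Assumption~\ref{assump:Theta}.b), and you bound the Schur complement below by evaluating the completing-the-square identity (the paper cites it as Fact~8.9.7 in Bernstein's book) at $E=\begin{bmatrix} A_s & B_s\end{bmatrix}^{\top}$, where the QMI holds by Theorem~\ref{thm:system_cond}; your further observation that only the easy implication (i)$\Rightarrow$(ii) is used, so condition~c) plays no role in this lemma, is also correct.
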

		\begin{proof}
			Assumption~\ref{assump:Theta}.b) implies that 
			$\mathcal{N}_{22} \prec 0$. 
			By Assumption~\ref{assump:Theta}.a),
			we have
			$(A_s,B_s) \in \Sigma_{\mathfrak{D},\Theta}$.
			Theorem~\ref{thm:system_cond} shows that
			\begin{equation}
				\label{eq:IABN}
				\begin{bmatrix}
					I_n \\ A_s^{\top} \\ B_s^{\top}
				\end{bmatrix}^{\top} 
				\mathcal{N} 
				\begin{bmatrix}
					I_n \\ A_s^{\top} \\ B_s^{\top}
				\end{bmatrix}  \succeq 0.
			\end{equation}
			Using \cite[Fact~8.9.7]{Bernstein2018}, we obtain
			\begin{equation}
				\label{eq:N_Schur}
				\mathcal{N}_{11} - \mathcal{N}_{12}
				\mathcal{N}_{22}^{-1}  \mathcal{N}_{12}^{\top} =
				\begin{bmatrix}
					I_n \\ A_s^{\top} \\ B_s^{\top}
				\end{bmatrix}^{\top} 
				\mathcal{N} 
				\begin{bmatrix}
					I_n \\ A_s^{\top} \\ B_s^{\top}
				\end{bmatrix} -
				\left(
				\begin{bmatrix}
					A_s^{\top} \\ B_s^{\top}
				\end{bmatrix} 
				+
				\mathcal{N}_{22}^{-1} \mathcal{N}_{12}^{\top}
				\right)^{\top}
				\mathcal{N}_{22}
				\left(
				\begin{bmatrix}
					A_s^{\top} \\ B_s^{\top}
				\end{bmatrix} 
				+
				\mathcal{N}_{22}^{-1} \mathcal{N}_{12}^{\top}
				\right).
			\end{equation}
			By \eqref{eq:IABN}, \eqref{eq:N_Schur},
			and	
			$\mathcal{N}_{22}\prec 0$, we conclude that
			$\mathcal{N}_{11} - \mathcal{N}_{12}
			\mathcal{N}_{22}^{-1}  \mathcal{N}_{12}^{\top} \succeq 0$.
		\end{proof}
		
		\subsection{Quadratic stabilization}
		We study the problem of stabilizing
		the system \eqref{eq:system} 
		using the state-feedback law  with a gain 
		$K \in \mathbb{R}^{m \times n}$:
		\begin{equation}
			\label{eq:state_feedback}
			u(t) = Kx(t),\quad t \geq 0.
		\end{equation}
		
		We begin by recalling the definition of data informativity for 
		quadratic stabilization.
		This definition is based on a Lyapunov inequality and 
		was originally introduced for 
		discrete-time systems in \cite[Definition~3]{Waarde2022TAC}.
		\begin{definition}
			The data $\mathfrak{D} = (x_k,u_k)_{k=1}^{N} \in \Gamma_{\tau}$ are {\em 
				informative for
				quadratic stabilization under the noise class $\Delta_{\tau,\Theta}$} if there exist matrices $\Phi \in 
			\mathbb{S}_{++}^n$ and $K \in \mathbb{R}^{m \times n}$ 
			such that
			\begin{equation}
				\label{eq:Lyap}
				(A+BK)\Phi  + \Phi (A+BK)^{\top} \prec 0
			\end{equation}
			for all $(A,B) \in \Sigma_{\mathfrak{D},\Theta}$.
		\end{definition}
		
		The following theorem gives a necessary and sufficient
		LMI condition for 
		this informativity property.
		\begin{theorem}
			\label{thm:stabilization}
			Suppose that the data
			$\mathfrak{D} = (x_k,u_k)_{k=1}^{N}
			\in \Gamma_{\tau}$ and the noise-intensity matrix $\Theta \in \mathbb{S}_+^{2n+m}$
			satisfy Assumption~\ref{assump:Theta}. 
			Then the following statements are equivalent:
			\begin{enumerate}
				\renewcommand{\labelenumi}{\textup{(\roman{enumi})}}
				\item The data $\mathfrak{D}$ are informative for quadratic stabilization under the noise class $\Delta_{\tau,\Theta}$.
				\item There exist matrices $\Phi \in 
				\mathbb{S}_{++}^n$ and $L \in \mathbb{R}^{m \times n}$ 
				such that
			\begin{align}
				\begin{bmatrix}
					0_{n\times n} & -\Phi & -L^{\top} \\
					-\Phi & 0_{n\times n} & 0_{n\times m} \\
					-L & 0_{m\times n} & 0_{m\times m} 
				\end{bmatrix} + ZZ^* - \Theta
				\succ 0,
				\label{eq:stabilization_LMI}
			\end{align}
			where the data-associated Gramian
			$ZZ^* \in \mathbb{S}_+^{2n+m}$ is given by
			\eqref{eq:ZZ}.
		\end{enumerate}
		Moreover, if statement (ii) holds, then
		$K \coloneqq L \Phi^{-1}$ is such that 
		$A+BK$ is Hurwitz for
		all $(A,B) \in \Sigma_{\mathfrak{D},\Theta}$.
	\end{theorem}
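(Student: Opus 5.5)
The proof runs the standard data-informativity argument: rewrite the Lyapunov inequality as a quadratic matrix inequality in $\begin{bmatrix} I_n & A & B\end{bmatrix}^{\top}$, identify $\Sigma_{\mathfrak{D},\Theta}$ with $\mathcal{Z}_{n,n+m}(\mathcal{N})$, and apply the strict matrix $S$-lemma.

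\textbf{Reformulation.} By Theorem~\ref{thm:system_cond}, whose hypothesis \eqref{eq:Theta_cond} is Assumption~\ref{assump:Theta}.c), a pair $(A,B)$ lies in $\Sigma_{\mathfrak{D},\Theta}$ if and only if $E \coloneqq \begin{bmatrix} A & B\end{bmatrix}^{\top}\in \mathcal{Z}_{n,n+m}(\mathcal{N})$, with $\mathcal{N}=\Theta-ZZ^*$ as in \eqref{eq:N_def}.

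For $\Phi\succ0$ and $L=K\Phi$, the Lyapunov inequality \eqref{eq:Lyap} reads $-A\Phi-\Phi A^{\top}-BL-L^{\top}B^{\top}\succ0$. Define
\[
\mathcal{M}\coloneqq\begin{bmatrix} 0_{n\times n} & -\Phi & -L^{\top}\\ -\Phi & 0_{n\times n} & 0_{n\times m}\\ -L & 0_{m\times n} & 0_{m\times m}\end{bmatrix}.
\]
A direct multiplication shows
\[
\begin{bmatrix} I_n\\ A^{\top}\\ B^{\top}\end{bmatrix}^{\top}\mathcal{M}\begin{bmatrix} I_n\\ A^{\top}\\ B^{\top}\end{bmatrix}=-(A\Phi+\Phi A^{\top}+BL+L^{\top}B^{\top}).
\]
Hence \eqref{eq:Lyap} holds with $K=L\Phi^{-1}$ exactly when $\begin{bmatrix} A & B\end{bmatrix}^{\top}\in\mathcal{Z}^+_{n,n+m}(\mathcal{M})$. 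So statement (i) is equivalent to the existence of $\Phi\succ0$ and $L$ such that $\mathcal{Z}_{n,n+m}(\mathcal{N})\subseteq\mathcal{Z}^+_{n,n+m}(\mathcal{M})$. Any $K$ corresponds to $L=K\Phi$, and conversely $K=L\Phi^{-1}$.

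\textbf{Applying the $S$-lemma.} Lemma~\ref{lem:nec_cond_for_N} verifies \eqref{eq:S_Lemma_cond} for $\mathcal{N}$. Lemma~\ref{lem:S_lemma} then shows that the inclusion holds if and only if $\mathcal{M}-\alpha\mathcal{N}\succ0$ for some $\alpha\ge0$, i.e.
\[
\mathcal{M}+\alpha(ZZ^*-\Theta)\succ0 .
\]
If (ii) holds, this is the case $\alpha=1$, so (ii)$\Rightarrow$(i).

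\textbf{The step needing care: removing $\alpha$.} For (i)$\Rightarrow$(ii) I must pass from some $\alpha\ge0$ to $\alpha=1$.
\begin{itemize}
\item First, $\alpha>0$. Otherwise $\mathcal{M}\succ0$, which is impossible since the $(2,2)$ block of $\mathcal{M}$ is zero.
\item Second, rescale. Since $\mathcal{M}$ is linear in $(\Phi,L)$, dividing by $\alpha$ gives $\mathcal{M}(\Phi/\alpha,L/\alpha)+ZZ^*-\Theta\succ0$. The pair $(\Phi/\alpha,L/\alpha)$ still has $\Phi/\alpha\succ0$, so (ii) holds.
\item The gain is unchanged, because $(L/\alpha)(\Phi/\alpha)^{-1}=L\Phi^{-1}$.
\end{itemize}

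\textbf{The final claim.} If (ii) holds, then by the above \eqref{eq:Lyap} holds for every $(A,B)\in\Sigma_{\mathfrak{D},\Theta}$ with $K=L\Phi^{-1}$ and $\Phi\succ0$. A standard Lyapunov argument applied to $(A+BK)^{\top}$ then shows that $A+BK$ is Hurwitz.
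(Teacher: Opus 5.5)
Your proposal is correct and follows essentially the same route as the paper. You identify $\Sigma_{\mathfrak{D},\Theta}$ with $\mathcal{Z}_{n,n+m}(\mathcal{N})$ via Theorem~\ref{thm:system_cond}, apply the strict matrix $S$-lemma (with Lemma~\ref{lem:nec_cond_for_N} supplying its hypotheses), use the zero $(2,2)$-block of $\mathcal{M}$ to force $\alpha>0$, and then rescale. The only cosmetic difference is in (ii)$\Rightarrow$(i): the paper sandwiches $\mathcal{M}-\mathcal{N}\succ 0$ between $\begin{bmatrix} I_n & A & B\end{bmatrix}$ and its transpose directly, instead of invoking the trivial direction of the $S$-lemma, so that direction needs neither \eqref{eq:S_Lemma_cond} nor the hard half of Theorem~\ref{thm:system_cond}.
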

	
	\begin{proof}
		First, we prove the implication (i) $\Rightarrow $ (ii).
		Let $\Phi \in 
		\mathbb{S}_{++}^n$ and $K \in \mathbb{R}^{m \times n}$ 
		satisfy the Lyapunov inequality \eqref{eq:Lyap}
		for all $(A,B) \in \Sigma_{\mathfrak{D},\Theta}$.
		Define $\mathcal{M} \in \mathbb{S}^{2n+m}$ by
		\begin{equation}
			\label{eq:M_def}
			\mathcal{M} \coloneqq 
			\begin{bmatrix}
				0_{n\times n} & -\Phi & -\Phi K^{\top} \\
				-\Phi & 0_{n\times n} & 0_{n\times m} \\
				-K\Phi & 0_{m\times n} & 0_{m\times m} 
			\end{bmatrix}.
		\end{equation}
		Since
		\begin{equation}
			\label{eq:M_Lyap}
			\begin{bmatrix}
				I_n \\ A^{\top} \\ B^{\top}
			\end{bmatrix}^{\top} 
			\mathcal{M} 
			\begin{bmatrix}
				I_n \\ A^{\top} \\ B^{\top}
			\end{bmatrix} = 
			-(A+BK)\Phi  - \Phi (A+BK)^{\top}
		\end{equation}
		for all $(A,B) \in \Sigma_{n,m}$,
		the Lyapunov inequality yields
		\[
		\begin{bmatrix}
			A^{\top} \\ B^{\top}
		\end{bmatrix} \in 
		\mathcal{Z}_{n,n+m}^+(\mathcal{M})
		\]
		for all $(A,B) \in \Sigma_{\mathfrak{D},\Theta}$.
		By Theorem~\ref{thm:system_cond}, 
		$(A,B) \in \Sigma_{\mathfrak{D},\Theta}$
		is equivalent to
		\[
		\begin{bmatrix}
			A^{\top} \\ B^{\top}
		\end{bmatrix} \in 
		\mathcal{Z}_{n,n+m}(\mathcal{N}),
		\]
		where $\mathcal{N}\in \mathbb{S}^{2n+m}$ is as in \eqref{eq:N_def}.
		Therefore,
		\[
		\mathcal{Z}_{n,n+m}(\mathcal{N}) \subseteq
		\mathcal{Z}_{n,n+m}^{+}(\mathcal{M}).
		\]
		By Lemma~\ref{lem:nec_cond_for_N} and 
		the strict matrix $S$-lemma,
		there exists a scalar $\alpha \geq 0$ 
		such that $\mathcal{M} - \alpha \mathcal{N} \succ 0$.
		Since the $(2,2)$-block of $\mathcal{M}$ is the zero matrix,
		it follows that
		$\alpha  >0$. Replacing $\Phi/\alpha$
		by $\Phi$
		and $K\Phi/\alpha$ by $L$, we obtain
		the LMI \eqref{eq:stabilization_LMI}.
		
		Next, we prove the implication (ii) $\Rightarrow $ (i).
		Let $\Phi \in 
		\mathbb{S}_{++}^n$ and $L \in \mathbb{R}^{m \times n}$ 
		satisfy the LMI~\eqref{eq:stabilization_LMI}.
		Set $K \coloneqq L\Phi^{-1}$ and define
		$\mathcal{M}$ by \eqref{eq:M_def}.
		Then the LMI~\eqref{eq:stabilization_LMI}
		yields $\mathcal{M}- \mathcal{N} \succ 0$,
		where $\mathcal{N}\in \mathbb{S}^{2n+m}$ is defined by \eqref{eq:N_def}.
		From Theorem~\ref{thm:system_cond},
		it  follows that for all $(A,B) \in \Sigma_{\mathfrak{D},\Theta}$,
		\[
		\begin{bmatrix}
			I_n \\ A^{\top} \\ B^{\top}
		\end{bmatrix}^{\top}  \mathcal{N} \begin{bmatrix}
			I_n \\ A^{\top} \\ B^{\top}
		\end{bmatrix} 
		\succeq 0.
		\]
		Since \eqref{eq:M_Lyap} remains valid for all $(A,B) \in \Sigma_{n,m}$ without assuming that
		statement~(i) holds,
		we have
		\[
		0 \prec \begin{bmatrix}
			I_n \\ A^{\top} \\ B^{\top}
		\end{bmatrix}^{\top} (\mathcal{M} - \mathcal{N}) \begin{bmatrix}
			I_n \\ A^{\top} \\ B^{\top}
		\end{bmatrix} \preceq
		-(A+BK)\Phi  - \Phi (A+BK)^{\top}
		\]
		for all $(A,B) \in \Sigma_{\mathfrak{D},\Theta}$.
		Thus, the data $\mathfrak{D}$ are informative for
		quadratic
		stabilization under the noise class $\Delta_{\tau,\Theta}$, and 
		$A+BK$ is Hurwitz for
		all $(A,B) \in \Sigma_{\mathfrak{D},\Theta}$.
	\end{proof}
	
	\subsection{Performance-guaranteed control}
	\label{sec:H2_control}
	We extend the stabilization result
	to address the problem of 
	$H_2$-control and $H_{\infty}$-control.
	Consider the system
	\begin{equation}
		\label{eq:system_with_performance}
		x'(t)=A(x(t)+q(t))+B(u(t)+r(t))+p(t)\quad t\geq 0,
	\end{equation}
	where $p(t),q(t) \in \mathbb{R}^n$
	and $r(t) \in \mathbb{R}^m$
	are the external signals at time $t \geq 0$.
	Here $p$ corresponds to
	the process noise, $q$ to the state bias (see, e.g., \cite{Hockerdal2009}), and $r$ to the input disturbance.
	To characterize these external signals,
	we introduce the normalized signal 
	$w(t) \in \mathbb{R}^{n_w}$ for $H_2$-control and $H_{\infty}$-control.
	Then $p$, $q$, and $r$ are modeled as 
	\begin{equation}
		\label{eq:external_input}
		\begin{bmatrix}
			p(t) \\ q(t) \\ r(t) 
		\end{bmatrix} = 
		\Omega w(t),\quad  t \geq 0,
	\end{equation}
	where 
	$\Omega\in \mathbb{R}^{(2n+m) \times n_w}$ is a
	weighting matrix used to scale the external signals.
	We design the matrix $\Omega$ by
	using known upper bounds for $p$, $q$, and $r$; see also
	Remarks~\ref{rem:H2_Sigma} and 
	\ref{rem:Hinf_Sigma} below.
	For performance evaluation, we define
	the performance output $z(t) \in \mathbb{R}^{n_z}$ by
	\begin{equation}
		\label{eq:performance_output}
		z(t) = Cx(t) + Du(t) + E w(t),\quad t \geq 0,
	\end{equation}
	where 
	$C \in \mathbb{R}^{n_z\times n}$, 
	$D\in \mathbb{R}^{n_z\times m}$, and
	$E \in \mathbb{R}^{n_z\times n_w}$.
	We assume that the system 
	$(A,B) \in \Sigma_{n,m}$ is unknown, whereas
	the weighting matrix $\Omega$ and 
	the matrices $C$, $D$, and $E$ characterizing the performance
	are known.
	These known matrices are fixed throughout Section~\ref{sec:H2_control}.
	
	For the state-feedback law in \eqref{eq:state_feedback},
	the 
	transfer function $G_K$ from $w$ to $z$ is given by
	\begin{equation}
		\label{eq:transfer_function}
		G_K(s) = (C+DK)(sI_n-(A+BK))^{-1}B_{\Omega}  +E,
	\end{equation}
	where $B_{\Omega} \coloneqq \begin{bmatrix}
		I_n & A & B 
	\end{bmatrix} \Omega$.
	When $A+BK$ is Hurwitz and $E = 0$, we denote the $H_2$-norm of $G_K$ by
	$\|G_K\|_{H_2}$.
	When $A+BK$ is Hurwitz, we denote  the $H_\infty$-norm of $G_K$ by
	$\|G_K\|_{H_\infty}$.
	
	\subsubsection{$H_2$-control}
	In the $H_2$-control setting, we assume that 
	$E = 0$.
	For fixed 
	$\gamma >0$ and 
	$K\in \mathbb{R}^{m \times n}$, 
	the following statements are equivalent (see, e.g., \cite[Section~5.3]{Duan2013}):
	\begin{enumerate}
		\renewcommand{\labelenumi}{\textup{(\roman{enumi})}}
		\item 
		$A+BK$ is Hurwitz and $\|G_K\|_{H_2} < \gamma$.
		\item There exist matrices $\Phi\in \mathbb{S}_{++}^n$ 
		and $\Psi\in \mathbb{S}_{++}^{n_z}$ such that 
		\begin{align}
			&(A+BK)\Phi+\Phi(A+BK)^{\top} + B_{\Omega}B_{\Omega}^{\top}
			\prec 0,\label{eq:H2_LMIs1}\\ 
			&\Psi - (C+DK)\Phi(C+DK)^{\top} \succ 0,\label{eq:H2_LMIs2}\\
			&\trace\,(\Psi) < \gamma^2.
			\label{eq:H2_LMIs3}
		\end{align}
	\end{enumerate}
	
	Motivated by the
	above equivalence, we define the notion of
	informativity for $H_2$-control
	in the same spirit as in the discrete-time case \cite[Definition~3]{Waarde2022TAC}.
	\begin{definition}
		The data $\mathfrak{D} = (x_k,u_k)_{k=1}^{N}
		\in \Gamma_{\tau}$ are 
		{\em informative for $H_2$-control with 
			performance $\gamma>0$  under the noise class $\Delta_{\tau,\Theta}$}
		if there exist matrices $\Phi\in \mathbb{S}_{++}^n$,
		$\Psi\in \mathbb{S}_{++}^{n_z}$,
		and $K \in \mathbb{R}^{m \times n}$ such that 
		the inequalities \eqref{eq:H2_LMIs1}--\eqref{eq:H2_LMIs3} 
		hold for all $(A,B) \in \Sigma_{\mathfrak{D},\Theta}$.
	\end{definition}
	
	The combination of the QMI condition in Theorem~\ref{thm:system_cond} 
	and the strict matrix $S$-lemma
	yields
	a characterization of informativity for $H_2$-control
	as in the case of quadratic stabilization.
	\begin{theorem}
		Suppose that the data
		$\mathfrak{D} = (x_k,u_k)_{k=1}^{N}
		\in \Gamma_{\tau}$ and the noise-intensity matrix $\Theta \in \mathbb{S}_+^{2n+m}$
		satisfy Assumption~\ref{assump:Theta}. 
		Assume that $E = 0$. Then
		the following statements are equivalent
		for a fixed $\gamma>0$:
		\begin{enumerate}
			\renewcommand{\labelenumi}{\textup{(\roman{enumi})}}
			\item The data $(x_k,u_k)_{k=1}^{N}$ are 
			informative for $H_2$-control with 
			performance $\gamma$ under the noise class $\Delta_{\tau,\Theta}$.
			\item There exist matrices $\Phi\in \mathbb{S}_{++}^n$,
			$\Psi\in \mathbb{S}_{++}^{n_z}$, and $L \in \mathbb{R}^{m \times n}$, 
			and a scalar $\alpha >0$ such that
			\begin{align}
				&\begin{bmatrix}
					0_{n\times n}
					& -
					\Phi & -L^{\top} \\
					- 	\Phi & 0_{n\times n} & 0_{n\times m} \\
					-L  & 0_{m\times n} & 0_{m\times m}
				\end{bmatrix}- \Omega \Omega^{\top}  + \alpha (ZZ^* - \Theta) \succ 0,
				\label{eq:H2_info_LMIs1}\\
				&\begin{bmatrix}
					\Psi & C\Phi+DL \\
					L^{\top}D^{\top} + \Phi C^{\top} & \Phi
				\end{bmatrix}  \succ 0,
				\label{eq:H2_info_LMIs2} \\
				&\trace\,(\Psi) < \gamma^2,
				\label{eq:H2_info_LMIs3}
			\end{align}
			where the data-associated Gramian
			$ZZ^* \in \mathbb{S}_+^{2n+m}$ is given by
			\eqref{eq:ZZ}.
		\end{enumerate}
		Moreover, if statement (ii) holds, then
		$K \coloneqq L \Phi^{-1}$ is such that 
		$A+BK$ is Hurwitz and $\|G_K\|_{H_2} < \gamma$
		for 
		all systems $(A,B) \in \Sigma_{\mathfrak{D},\Theta}$.
	\end{theorem}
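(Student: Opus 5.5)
The plan follows the proof of Theorem~\ref{thm:stabilization}, with the additional term $B_{\Omega}B_{\Omega}^{\top}$ in \eqref{eq:H2_LMIs1} and the decoupled conditions \eqref{eq:H2_LMIs2}--\eqref{eq:H2_LMIs3}. Fix $\Phi \in \mathbb{S}_{++}^n$ and $K$, and define $\mathcal{M}$ by \eqref{eq:M_def}. Since $B_{\Omega} = \begin{bmatrix} I_n & A & B\end{bmatrix}\Omega$, we have for all $(A,B)\in\Sigma_{n,m}$
\[
\begin{bmatrix} I_n \\ A^{\top} \\ B^{\top}\end{bmatrix}^{\top}
(\mathcal{M} - \Omega\Omega^{\top})
\begin{bmatrix} I_n \\ A^{\top} \\ B^{\top}\end{bmatrix}
= -(A+BK)\Phi - \Phi(A+BK)^{\top} - B_{\Omega}B_{\Omega}^{\top}.
\]
Hence \eqref{eq:H2_LMIs1} holds for $(A,B)$ if and only if $\begin{bmatrix} A^{\top}\\ B^{\top}\end{bmatrix} \in \mathcal{Z}^+_{n,n+m}(\mathcal{M}-\Omega\Omega^{\top})$. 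By Theorem~\ref{thm:system_cond}, $\Sigma_{\mathfrak{D},\Theta}$ corresponds exactly to $\mathcal{Z}_{n,n+m}(\mathcal{N})$ with $\mathcal{N}$ as in \eqref{eq:N_def}. The conditions \eqref{eq:H2_LMIs2} and \eqref{eq:H2_LMIs3} do not involve $(A,B)$, so only \eqref{eq:H2_LMIs1} needs the robustness argument.

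For (i) $\Rightarrow$ (ii), take $\Phi,\Psi,K$ from the definition of informativity. Then $\mathcal{Z}_{n,n+m}(\mathcal{N}) \subseteq \mathcal{Z}^+_{n,n+m}(\mathcal{M}-\Omega\Omega^{\top})$. Lemma~\ref{lem:nec_cond_for_N} supplies the hypotheses \eqref{eq:S_Lemma_cond}, so the strict matrix $S$-lemma (Lemma~\ref{lem:S_lemma}) gives $\beta\ge 0$ with $\mathcal{M}-\Omega\Omega^{\top}-\beta\mathcal{N}\succ 0$.

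The step needing care is that, unlike in the stabilization case, we cannot rescale $\Phi$. The $\Omega\Omega^{\top}$ term is not homogeneous, and rescaling would spoil \eqref{eq:H2_LMIs2}--\eqref{eq:H2_LMIs3}. This is why $\alpha$ appears explicitly in \eqref{eq:H2_info_LMIs1}. We only need $\alpha>0$, which we obtain in one of two ways:
\begin{itemize}
\item If $\beta>0$, set $\alpha\coloneqq\beta$.
\item If $\beta=0$, then $\mathcal{M}-\Omega\Omega^{\top}\succ0$. This is impossible, because the $(2,2)$-block of $\mathcal{M}-\Omega\Omega^{\top}$ is $-(\Omega\Omega^{\top})_{22}\preceq 0$.
\end{itemize}
In either case $\alpha>0$. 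Moreover, strictness is open in $\alpha$, so a small positive perturbation also works if needed.

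Now set $L\coloneqq K\Phi$; then \eqref{eq:H2_info_LMIs1} holds. By a Schur complement with $\Phi\succ0$, \eqref{eq:H2_LMIs2} is equivalent to \eqref{eq:H2_info_LMIs2}, since $(C+DK)\Phi = C\Phi + DL$ and $(C+DK)\Phi(C+DK)^{\top} = (C\Phi+DL)\Phi^{-1}(C\Phi+DL)^{\top}$. Finally, \eqref{eq:H2_info_LMIs3} is \eqref{eq:H2_LMIs3}.

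For (ii) $\Rightarrow$ (i), set $K\coloneqq L\Phi^{-1}$. Then \eqref{eq:H2_info_LMIs1} reads $\mathcal{M}-\Omega\Omega^{\top}-\alpha\mathcal{N}\succ0$. For any $(A,B)\in\Sigma_{\mathfrak{D},\Theta}$, Theorem~\ref{thm:system_cond} gives $\begin{bmatrix} I_n & A & B\end{bmatrix}\mathcal{N}\begin{bmatrix} I_n & A & B\end{bmatrix}^{\top}\succeq0$. Congruence with $\begin{bmatrix} I_n & A & B\end{bmatrix}^{\top}$, together with $\alpha>0$, therefore gives \eqref{eq:H2_LMIs1}; this direction needs no $S$-lemma. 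The Schur complement converts \eqref{eq:H2_info_LMIs2} back into \eqref{eq:H2_LMIs2}, and \eqref{eq:H2_info_LMIs3} is \eqref{eq:H2_LMIs3}. Thus informativity holds, and the cited equivalence from \cite[Section~5.3]{Duan2013} yields that $A+BK$ is Hurwitz and $\|G_K\|_{H_2}<\gamma$ for all $(A,B)\in\Sigma_{\mathfrak{D},\Theta}$.
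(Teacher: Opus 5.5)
Your proposal is correct and follows essentially the same route as the paper. You apply the strict matrix $S$-lemma to $\mathcal{M}-\Omega\Omega^{\top}$ against $\mathcal{N}=\Theta-ZZ^*$, force $\alpha>0$ via the nonpositive middle diagonal block without rescaling $\Phi$, convert \eqref{eq:H2_LMIs2} by a Schur complement, and get the converse by congruence with $\begin{bmatrix} I_n & A & B\end{bmatrix}^{\top}$ together with Theorem~\ref{thm:system_cond}; along the way you spell out details the paper compresses into ``by the same argument as in the proof of Theorem~\ref{thm:stabilization}.''
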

	
	\begin{proof}
		First, we prove the implication (i) $\Rightarrow $ (ii).
		Let $\Phi\in \mathbb{S}_{++}^n$,
		$\Psi \in \mathbb{S}_{++}^{n_z}$, and 
		$K \in \mathbb{R}^{m \times n}$ satisfy
		the inequalities \eqref{eq:H2_LMIs1}--\eqref{eq:H2_LMIs3} for all $(A,B) \in \Sigma_{\mathfrak{D},\Theta}$.
		Define the matrix $\mathcal{M} \in \mathbb{S}^{2n+m}$ by
		\[
		\mathcal{M} \coloneqq
		\begin{bmatrix}
			0_{n\times n}
			& -\Phi & -\Phi K^{\top} \\
			- \Phi & 0_{n\times n} & 0_{n\times m} 
			\\ 
			-K \Phi  & 0_{m\times n} & 0_{m\times m}
		\end{bmatrix} - 
		\Omega \Omega^{\top}.
		\]
		Then
		\[
		\begin{bmatrix}
			I_n \\ A^{\top} \\ B^{\top}
		\end{bmatrix}^{\top}
		\mathcal{M} 
		\begin{bmatrix}
			I_n \\ A^{\top} \\ B^{\top}
		\end{bmatrix} = -
		(A+BK)\Phi-\Phi(A+BK)^{\top} - B_{\Omega}B_{\Omega}^{\top}.
		\]
		By the same argument as in the proof of Theorem~\ref{thm:stabilization},
		the inequality 
		\eqref{eq:H2_LMIs1} implies that
		$\mathcal{M} - \alpha \mathcal{N} \succ 0$
		for some scalar $\alpha >0$, 
		where $\mathcal{N}\in \mathbb{S}^{2n+m}$ is as in \eqref{eq:N_def}.
		Setting $L \coloneqq K\Phi$, we obtain 
		the first inequality \eqref{eq:H2_info_LMIs1}.
		Since 
		the inequalities \eqref{eq:H2_LMIs2}
		and \eqref{eq:H2_info_LMIs2} are equivalent
		by the Schur complement argument,
		statement (ii) follows.
		
		Next, we prove
		the implication (ii) $\Rightarrow $ (i).
		Let $\Phi\in \mathbb{S}_{++}^n$,
		$\Psi\in \mathbb{S}_{++}^{n_z}$, $L \in \mathbb{R}^{m \times n}$,
		and $\alpha >0$ satisfy the inequalities
		\eqref{eq:H2_info_LMIs1}--\eqref{eq:H2_info_LMIs3}.
		Set $K \coloneqq L\Phi^{-1}$.
		By \eqref{eq:H2_info_LMIs1},
		one can derive the first inequality \eqref{eq:H2_LMIs1}
		in the same way as in the proof of
		Theorem~\ref{thm:stabilization}, and
		the Schur complement argument shows that
		the inequalities \eqref{eq:H2_LMIs2}
		and \eqref{eq:H2_info_LMIs2} are equivalent.
		Thus, $(x_k,u_k)_{k=1}^{N}$ are 
		informative for $H_2$-control with 
		performance $\gamma$  under the noise class $\Delta_{\tau,\Theta}$. Moreover,
		$A+BK$ is Hurwitz and $\|G_K\|_{H_2} < \gamma$
		for 
		all $(A,B) \in \Sigma_{\mathfrak{D},\Theta}$.
	\end{proof}
	
	\begin{remark}[Weighting matrix for 
		external signals]
		\label{rem:H2_Sigma}
		If
		$\widetilde \Omega \in \mathbb{R}^{(2n+m) \times \widetilde n_w}$
		satisfies
		$\widetilde \Omega\widetilde \Omega^{\top} \preceq
		\Omega\Omega^{\top}$, then
		the LMI \eqref{eq:H2_info_LMIs1} 
		remains valid when $\Omega $ is replaced by
		$\widetilde \Omega$.
		This implies that
		the weighting matrix $\Omega$ may be chosen 
		based on upper bound for the external signals.	\hspace*{\fill} $\triangle$ 
	\end{remark}
	
	\subsubsection{$H_{\infty}$-control}
	\label{sec:Hinf_control}
	We first recall that
	the following statements are equivalent
	for fixed  
	$\gamma >0$
	and $K\in \mathbb{R}^{m \times n}$
	(see, e.g., \cite[Section~5.2]{Duan2013}):
	\begin{enumerate}
		\renewcommand{\labelenumi}{\textup{(\roman{enumi})}}
		\item 
		$A+BK$ is Hurwitz and 
		$\|G_K\|_{H_{\infty}} < \gamma$.
		\item There exists a matrix $\Phi \in \mathbb{S}_{++}^n$ such that
		\begin{equation}
			\label{eq:Hinf_LMIs}
			\begin{bmatrix}
				(A+BK)\Phi+ \Phi(A+BK)^{\top} + B_{\Omega}B_{\Omega}^{\top} & 
				\Phi(C+DK)^{\top}  + B_{\Omega}E^{\top} \\
				(C+DK)\Phi + EB_{\Omega}^{\top} & EE^{\top} - \gamma^2 I_{n_z}
			\end{bmatrix} \prec 0.
		\end{equation}
	\end{enumerate}
	
	Based on this equivalence, we define
	the notion of informativity for $H_{\infty}$-control
	as in the discrete-time case
	\cite[Definition~19]{Waarde2022TAC}.
	\begin{definition}
		The data $\mathfrak{D} = (x_k,u_k)_{k=1}^{N}
		\in \Gamma_{\tau}$ are 
		{\em informative for $H_{\infty}$-control with performance $\gamma>0$  under the noise class $\Delta_{\tau,\Theta}$}
		if there exist matrices $\Phi \in \mathbb{S}_{++}^n$ and 
		$K \in \mathbb{R}^{m \times n}$ 
		such that
		the LMI \eqref{eq:Hinf_LMIs} holds for all $(A,B) \in \Sigma_{\mathfrak{D},\Theta}$.
	\end{definition}
	
	This informativity property is characterized using an argument analogous to that for 
	quadratic stabilization and $H_2$-control.
	\begin{theorem}
		\label{thm:H_inf}
		Suppose that the data
		$\mathfrak{D} = (x_k,u_k)_{k=1}^{N}
		\in \Gamma_{\tau}$ and the noise-intensity matrix $\Theta \in \mathbb{S}_+^{2n+m}$
		satisfy Assumption~\ref{assump:Theta}. 
		Then
		the following statements are equivalent
		for a fixed $\gamma >0$:
		\begin{enumerate}
			\renewcommand{\labelenumi}{\textup{(\roman{enumi})}}
			\item The data $(x_k,u_k)_{k=1}^{N}$ are 
			informative for $H_\infty$-control with 
			performance $\gamma$  under the noise class $\Delta_{\tau,\Theta}$.
			\item There exist matrices $\Phi \in \mathbb{S}_{++}^n$ and 
			$L \in \mathbb{R}^{m \times n}$, 
			and a scalar $\alpha >0$ 
			such that
			\begin{equation}
				\label{eq:Hinf_info_LMIs}
				\begin{bmatrix}
					0_{n\times n}
					& - \Phi & - L^{\top} &
					\Phi C^{\top} +L^{\top} D^{\top} \\
					-\Phi & 0_{n\times n} & 0_{n\times m} & 0_{n\times n_z} \\
					-L & 0_{m\times n} & 0_{m\times m} & 0_{m\times n_z} \\
					C \Phi  +DL   & 0_{n_z\times n} & 0_{n_z\times m} & 
					\gamma^2 I_{n_z} 
				\end{bmatrix} -
				\begin{bmatrix}
					\Omega \\ E
				\end{bmatrix}
				\begin{bmatrix}
					\Omega \\ E
				\end{bmatrix}^{\top}
				+ 
				\begin{bmatrix}
					\alpha(ZZ^*-\Theta) & 0_{(2n+m)\times n_z} \\
					0_{n_z \times (2n+m)} & 0_{n_z\times n_z}
				\end{bmatrix}\succ 0,
			\end{equation}
			where the data-associated Gramian
			$ZZ^* \in \mathbb{S}_+^{2n+m}$ is given by
			\eqref{eq:ZZ}.
		\end{enumerate}
		Moreover, if statement (ii) holds, then
		$K \coloneqq L \Phi^{-1}$ is such that 
		$A+BK$ is Hurwitz and $\|G_K\|_{H_{\infty}} < \gamma$
		for 
		all systems $(A,B) \in \Sigma_{\mathfrak{D},\Theta}$.
	\end{theorem}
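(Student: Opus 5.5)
The proof follows the same route as Theorem~\ref{thm:stabilization} and the $H_2$ result: convert the $H_\infty$ LMI \eqref{eq:Hinf_LMIs} into a quadratic inequality in $\begin{bmatrix} I_n & A & B\end{bmatrix}$ and apply the strict matrix $S$-lemma with $\mathcal{N}=\Theta-ZZ^*$. The difficulty is that \eqref{eq:Hinf_LMIs} is not affine in $(A,B)$ in the required block form, because of the $(2,2)$ block $EE^\top-\gamma^2 I_{n_z}$ and the cross terms. I will first take a Schur complement with respect to a constant block, so that only the $(1,1)$ block depends on $(A,B)$.

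Fix $\Phi\succ 0$ and $K$, and set $T=\begin{bmatrix} I_n & A & B\end{bmatrix}$, so that $B_\Omega=T\Omega$. Then \eqref{eq:Hinf_LMIs} can be written as
\[
\begin{bmatrix} T & 0\\ 0 & I_{n_z}\end{bmatrix}\mathcal{M}_0\begin{bmatrix} T & 0\\ 0 & I_{n_z}\end{bmatrix}^{\top}\succ 0,
\]
where
\[
\mathcal{M}_0 \coloneqq \begin{bmatrix} \mathcal{M}_{\mathrm{st}} & -\mathcal{C}^\top\\ -\mathcal{C} & \gamma^2 I_{n_z}\end{bmatrix}-\begin{bmatrix}\Omega\\ E\end{bmatrix}\begin{bmatrix}\Omega\\ E\end{bmatrix}^{\top}.
\]
Here $\mathcal{M}_{\mathrm{st}}$ is the matrix \eqref{eq:M_def} and $\mathcal{C}=\begin{bmatrix} C\Phi+DK\Phi & 0 & 0\end{bmatrix}$; the signs will be matched to \eqref{eq:Hinf_info_LMIs}. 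To check this, I will verify $T\mathcal{M}_{\mathrm{st}}T^\top=-(A+BK)\Phi-\Phi(A+BK)^\top$ and $\mathcal{C}T^\top=(C+DK)\Phi$. Since the lower-right block $\gamma^2 I-EE^\top$ of $-\eqref{eq:Hinf_LMIs}$ must be positive definite, a Schur complement turns \eqref{eq:Hinf_LMIs} into
\[
T\mathcal{M}T^\top\succ 0,\qquad \mathcal{M}\coloneqq \mathcal{M}_{\mathrm{st}}-\Omega\Omega^\top-(\mathcal{C}^\top+\Omega E^\top)(\gamma^2 I-EE^\top)^{-1}(\mathcal{C}+E\Omega^\top),
\]
where $\mathcal{M}$ does not depend on $(A,B)$.

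For (i)$\Rightarrow$(ii), informativity gives $\gamma^2I-EE^\top\succ 0$ (the $(2,2)$ block of \eqref{eq:Hinf_LMIs} is independent of $(A,B)$, and $\Sigma_{\mathfrak{D},\Theta}\neq\emptyset$ by Assumption~\ref{assump:Theta}.a)). It also gives $\mathcal{Z}_{n,n+m}(\mathcal{N})\subseteq\mathcal{Z}^+_{n,n+m}(\mathcal{M})$ by Theorem~\ref{thm:system_cond}. Lemma~\ref{lem:nec_cond_for_N} together with Lemma~\ref{lem:S_lemma} then yields $\alpha\ge 0$ with $\mathcal{M}-\alpha\mathcal{N}\succ0$. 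Since $\gamma^2I-EE^\top\succ0$, I undo the Schur complement on the enlarged matrix, which gives $\mathcal{M}_0+\operatorname{blkdiag}(-\alpha\mathcal{N},0)\succ 0$. The $(2,2)$ block of $\mathcal{M}$ is $-\Omega_2\Omega_2^\top-(\dots)\preceq 0$, where $\Omega_2$ denotes the last $n+m$ rows of $\Omega$. Hence $\alpha>0$, because otherwise the $(2,2)$ block of $\mathcal{M}-\alpha\mathcal{N}$ could not be positive definite. Dividing by $\alpha$ and scaling $\gamma$ consistently is the delicate point: \eqref{eq:Hinf_info_LMIs} keeps $\gamma^2 I$ and $\Omega\Omega^\top$ unscaled while $\alpha$ multiplies only $ZZ^*-\Theta$. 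So I will not rescale at all and simply set $L=K\Phi$; the statement with $\alpha>0$ is exactly $\mathcal{M}_0-\alpha\,\operatorname{blkdiag}(\mathcal{N},0)\succ0$ after $\Phi K^\top\mapsto L^\top$.

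For (ii)$\Rightarrow$(i), set $K=L\Phi^{-1}$. The LMI \eqref{eq:Hinf_info_LMIs} reads $\mathcal{M}_0-\alpha\operatorname{blkdiag}(\mathcal{N},0)\succ0$. Congruence with $\operatorname{blkdiag}(T^\top,I)$ gives $-\eqref{eq:Hinf_LMIs}$ left-hand side $-\alpha\operatorname{blkdiag}(T\mathcal{N}T^\top,0)\succ0$ for every $(A,B)\in\Sigma_{n,m}$. For $(A,B)\in\Sigma_{\mathfrak{D},\Theta}$, Theorem~\ref{thm:system_cond} gives $T\mathcal{N}T^\top\succeq0$, so \eqref{eq:Hinf_LMIs} holds; the congruence is legitimate because $T^\top$ has full column rank. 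The cited equivalence then yields that $A+BK$ is Hurwitz and $\|G_K\|_{H_\infty}<\gamma$.

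The main obstacle is the (i)$\Rightarrow$(ii) direction. There I must make sure the Schur-complement reduction is exact in both directions and that $\alpha>0$ is forced. Otherwise the argument is routine bookkeeping of signs in $\mathcal{C}$ to match the fourth block column of \eqref{eq:Hinf_info_LMIs}.
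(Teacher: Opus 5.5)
\paragraph{Verdict.} You follow the paper's proof in every step except the final identification with \eqref{eq:Hinf_info_LMIs}. That step does not go through as you describe it.

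\paragraph{Where you agree with the paper.} You take the Schur complement with respect to $\Lambda_\gamma=\gamma^2 I_{n_z}-EE^{\top}$ and apply the strict matrix $S$-lemma with $\mathcal{N}=\Theta-ZZ^*$. You force $\alpha>0$ from the nonpositive lower-right $(n+m)$-block of $\mathcal{M}$, you do not rescale, and you set $L=K\Phi$. The converse uses Theorem~\ref{thm:system_cond}. Your use of $\Sigma_{\mathfrak{D},\Theta}\neq\emptyset$ to obtain $\Lambda_\gamma\succ 0$ is more careful than the paper's one-liner. Your direct congruence in (ii)$\Rightarrow$(i) is equivalent to the paper's detour through $\mathcal{M}-\alpha\mathcal{N}\succ0$. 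Your $\mathcal{M}_0$, with $-\mathcal{C}^{\top}$ off the diagonal, is the correct encoding: $\blkdiag(T,I)\,\mathcal{M}_0\,\blkdiag(T,I)^{\top}$ is exactly minus the left-hand side of \eqref{eq:Hinf_LMIs}.

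\paragraph{The gap: the sign matching is not routine.} Your matrix has $-(\Phi C^{\top}+L^{\top}D^{\top})$ in the $(1,4)$ block, whereas \eqref{eq:Hinf_info_LMIs} has $+(\Phi C^{\top}+L^{\top}D^{\top})$. Flipping that sign by congruence with $\blkdiag(I_{2n+m},-I_{n_z})$ also flips the cross term $-\Omega E^{\top}$ coming from the outer product. The outer product then becomes $\begin{bmatrix}\Omega\\ -E\end{bmatrix}\begin{bmatrix}\Omega\\ -E\end{bmatrix}^{\top}$ rather than $\begin{bmatrix}\Omega\\ E\end{bmatrix}\begin{bmatrix}\Omega\\ E\end{bmatrix}^{\top}$.

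So your matrix and the printed one are equivalent only when $\Omega E^{\top}=0$, for instance $E=0$ as in Section~\ref{sec:example}. Otherwise the printed LMI encodes the $H_\infty$ bound for the closed loop with feedthrough $-E$ instead of $E$. This is a genuinely different condition: already $1/(s+1)+1$ and $1/(s+1)-1$ have $H_\infty$-norms $2$ and $1$.

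\paragraph{The paper has the same slip.} The source is \eqref{eq:M0_Omega_trans} in the paper's own proof. Its right-hand side $\blkdiag(\mathcal{M}_0,\gamma^2 I_{n_z})-\begin{bmatrix}\Omega\\ E\end{bmatrix}\begin{bmatrix}\Omega\\ E\end{bmatrix}^{\top}$ has off-diagonal block $-\Omega E^{\top}$, not $+\Omega E^{\top}$. Meanwhile the correct equivalence \eqref{eq:M0_Omega} needs $+\Omega E^{\top}$ alongside $+J_{n,m}\Phi C_K^{\top}$.

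\paragraph{What to do instead.} Do not assert that your condition "is exactly" \eqref{eq:Hinf_info_LMIs}. Your argument proves the theorem with the fourth block row and column of the first matrix negated, or equivalently with $\begin{bmatrix}\Omega\\ -E\end{bmatrix}$ in the outer product. This coincides with the stated LMI precisely when $\Omega E^{\top}=0$. You should either state the corrected LMI or add that hypothesis explicitly.
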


	\begin{proof}
		First, we prove the implication (i) $\Rightarrow $ (ii).
		Let $\Phi \in \mathbb{S}_{++}^n$ and 
		$K \in \mathbb{R}^{m \times n}$ 
		satisfy
		the inequality \eqref{eq:Hinf_LMIs} for all $(A,B) \in \Sigma_{\mathfrak{D},\Theta}$.
		Define $\Lambda_{\gamma}  \in \mathbb{S}^{n_z}$
		and $C_K \in \mathbb{R}^{n_z \times n}$ by
		\begin{equation}
			\label{eq:Lambda_CK_def}
			\Lambda_{\gamma}  \coloneqq\gamma^2 I_{n_z} -  EE^{\top}
			\quad \text{and} \quad 
			C_K \coloneqq C+DK.
		\end{equation}
		By \eqref{eq:Hinf_LMIs}, we obtain
		$\Lambda_{\gamma}  \succ 0$. Therefore,
		the Schur complement argument shows that
		the LMI~\eqref{eq:Hinf_LMIs} holds if and only if
		\begin{equation}
			\label{eq:Hinf_LMIs_SC}
			-(A+BK)\Phi-\Phi(A+BK)^{\top} - B_{\Omega}B_{\Omega}^{\top}
			- (\Phi C_K^{\top} + B_{\Omega}E^{\top}) \Lambda_{\gamma} ^{-1} 
			(C_K \Phi + E B_{\Omega}^{\top}) \succ 0.
		\end{equation}
		
		Define 
		$J_{n,m} \in \mathbb{R}^{(2n+m) \times n}$ by
		\[
		J_{n,m} \coloneqq 
		\begin{bmatrix}
			I_n \\ 0_{n\times n} \\ 0_{m\times n}
		\end{bmatrix}.
		\]
		Then
		\begin{equation}
			\label{eq:Lambda_inv_QMI}
			(\Phi C_K^{\top} + B_{\Omega}E^{\top}) \Lambda_{\gamma} ^{-1} 
			(C_K \Phi + E B_{\Omega}^{\top})  = 
			\begin{bmatrix}
				I_n \\ A^{\top} \\ B^{\top}
			\end{bmatrix}^{\top}
			\left(
			J_{n,m}\Phi C_K^{\top}
			+ \Omega E^{\top}
			\right) \Lambda_{\gamma} ^{-1}
			\left(
			C_K \Phi 
			J_{n,m}^{\top}
			+ E \Omega^{\top} 
			\right)
			\begin{bmatrix}
				I_n \\ A^{\top} \\ B^{\top}
			\end{bmatrix}.
		\end{equation}
		Define the matrices $\mathcal{M}_0,
		\mathcal{M} \in \mathbb{S}^{2n+m}$ by
		\begin{align}
			\label{eq:M0_def_Hinf}
			\mathcal{M}_0 &\coloneqq 
			\begin{bmatrix}
				0_{n\times n}  & -
				\Phi & - \Phi K^{\top} \\
				-\Phi & 0_{n\times n} & 0_{n\times m} \\
				-K\Phi  & 0_{m\times n} & 0_{m\times m} 
			\end{bmatrix}, \\
			\mathcal{M} &\coloneqq
			\mathcal{M}_0 - 
			\Omega \Omega^{\top} - 
			\left(
			J_{n,m}\Phi C_K^{\top}
			+ \Omega E^{\top}
			\right) \Lambda_{\gamma} ^{-1}
			\left(
			C_K \Phi 
			J_{n,m}^{\top}
			+ E \Omega^{\top} 
			\right).
			\label{eq:M_def_Hinf}
		\end{align}
		Using \eqref{eq:Lambda_inv_QMI},
		we can write \eqref{eq:Hinf_LMIs_SC} as
		\[
		\begin{bmatrix}
			I_n \\ A^{\top} \\ B^{\top}
		\end{bmatrix}^{\top}
		\mathcal{M}
		\begin{bmatrix}
			I_n \\ A^{\top} \\ B^{\top}
		\end{bmatrix} \succ 0.
		\]
		Therefore, we obtain
		$\mathcal{M} - \alpha \mathcal{N} \succ 0$
		for some $\alpha > 0$ as in the proof of Theorem~\ref{thm:stabilization},
		where $\mathcal{N}\in \mathbb{S}^{2n+m}$ is defined by \eqref{eq:N_def}.
		The Schur complement argument yields
		\begin{equation}
			\label{eq:M0_Omega}
			\mathcal{M} - \alpha \mathcal{N} \succ 0
			\quad \Leftrightarrow \quad 
			\begin{bmatrix}
				\mathcal{M}_0 - \Omega \Omega^{\top} - \alpha \mathcal{N} & 
				J_{n,m}\Phi C_K^{\top}
				+ \Omega E^{\top} \\
				C_K \Phi J_{n,m}^{\top} + 
				E \Omega^{\top}  & \Lambda_{\gamma} 
			\end{bmatrix} \succ 0.
		\end{equation}
		Since
		\begin{equation}
			\label{eq:M0_Omega_trans}
			\begin{bmatrix}
				\mathcal{M}_0 - \Omega \Omega^{\top}& 
				\Omega E^{\top} \\
				E\Omega^{\top}   & \Lambda_{\gamma} 
			\end{bmatrix} =
			\begin{bmatrix}
				\mathcal{M}_0 & 0_{(2n+m)\times n_z} \\
				0_{n_z\times (2n+m)} & \gamma^2 I_{n_z} 
			\end{bmatrix}
			-
			\begin{bmatrix}
				\Omega \\ E
			\end{bmatrix} 
			\begin{bmatrix}
				\Omega \\ E
			\end{bmatrix}^{\top},
		\end{equation}
		we conclude that the
		LMI \eqref{eq:Hinf_info_LMIs} holds,
		by setting
		$L \coloneqq K\Phi$ in the definition \eqref{eq:M0_def_Hinf} 
		of $\mathcal{M}_0$.
		
		Next, we prove
		the implication (ii) $\Rightarrow $ (i).
		Let $\Phi \in \mathbb{S}_{++}^n$,
		$L \in \mathbb{R}^{m \times n}$, 
		and  $\alpha >0$ satisfy the LMI~\eqref{eq:Hinf_info_LMIs}.
		Set $K \coloneqq L\Phi^{-1}$. Let $\Lambda_{\gamma}  \in \mathbb{S}^{n_z}$
		and $C_K \in \mathbb{R}^{n_z \times n}$ be as in \eqref{eq:Lambda_CK_def}. Then 
		the LMI \eqref{eq:Hinf_info_LMIs} implies that 
		$\Lambda_{\gamma} \succ 0$; see \eqref{eq:M0_Omega_trans}. Hence, using the inverse $\Lambda_{\gamma} ^{-1}$, we can define 
		$\mathcal{M} \in \mathbb{S}^{2n+m}$ by
		\eqref{eq:M_def_Hinf}.
		Let  $\mathcal{N}\in \mathbb{S}^{2n+m}$ be as in \eqref{eq:N_def}.
		Since \eqref{eq:M0_Omega} remains valid without 
		assuming that statement~(i) holds, 
		we obtain $\mathcal{M}-\alpha \mathcal{N}\succ 0$
		by the LMI \eqref{eq:Hinf_info_LMIs}.
		The rest of the proof is similar to that of
		Theorem~\ref{thm:stabilization}, applying the equivalence of
		\eqref{eq:Hinf_LMIs} and \eqref{eq:Hinf_LMIs_SC}.
	\end{proof}
	
	\begin{remark}[Weighting matrix for 
		external signals]
		\label{rem:Hinf_Sigma}
		Suppose that 
		$\widetilde \Omega \in \mathbb{R}^{(2n+m) \times \widetilde n_w}$
		and $\widetilde E \in \mathbb{R}^{n_z \times \widetilde n_w}$
		satisfy
		\[
		\begin{bmatrix}
			\widetilde \Omega \\ \widetilde E
		\end{bmatrix}
		\begin{bmatrix}
			\widetilde \Omega \\ \widetilde E
		\end{bmatrix}^{\top} \preceq 
		\begin{bmatrix}
			\Omega \\ E
		\end{bmatrix}
		\begin{bmatrix}
			\Omega \\ E
		\end{bmatrix}^{\top}.
		\]
		Then
		the LMI \eqref{eq:Hinf_info_LMIs}
		is still feasible when $\Omega $ and 
		$E$ are
		replaced by
		$\widetilde \Omega$ and $\widetilde E$, respectively.
		In this case, the matrices 
		$\Omega $ and 
		$E$
		can be designed using
		upper bounds for external signals, as
		in the $H_2$-control case (see Remark~\ref{rem:H2_Sigma}).	\hspace*{\fill} $\triangle$ 
	\end{remark}
	
	\section{Error analysis in signal reconstruction from sampled data}
	\label{sec:error_analysis}
	In this section, we reconstruct continuous-time signals 
	from sampled data and analyze the reconstruction error using synthesis operators.
	This error can be interpreted as measurement 
	noise within the framework of
	continuous-time data-driven control
	discussed in Sections~\ref{sec:data_consistent_systems}
	and \ref{sec:Data_driven_control}. Consequently,
	the following error analysis can be used to determine
	the noise-intensity matrix $\Theta$
	in Assumption~\ref{assump:Theta}.
	In  Section~\ref{sec:reconstruction_noise_free_case},
	we consider the case where
	noise-free sampled data are available, and 
	give
	upper bounds for the Gramians
	associated with the reconstruction error.
	In Section~\ref{sec:reconstruction_noisy_case},
	this error analysis is extended 
	to noisy sampled data.

	Throughout this section, we fix
	\[
	0 = t_0 < t_1 < \dots < t_N = \tau
	\]
	and $x \in \Hso([0,\tau];\mathbb{R}^n)$.
	Let 
	\begin{equation}
		\label{eq:h_def}
		h \coloneqq 
		\max\{t_{\ell+1}-t_{\ell}:\ell=0,\dots,N-1 \}.
	\end{equation}
	We make the following assumption on
	the $\Ls$-norm of the derivative of $x$.
	\begin{assumption}
		\label{assump:deri_bound}
		There exist constants $M\in [0,\pi/h)$ and $c \geq 0$ 
		such that  
		\begin{equation}
			\label{eq:x_deriv_bound}
			\|x'\|_{\Ls} \leq M \|x\|_{\Ls}+c.
		\end{equation}
	\end{assumption}
	The constants $M$ and $c$ in this assumption can be computed when
	upper bounds for the unknown system matrices and the noise are available 
	in the contexts of Sections~\ref{sec:data_consistent_systems}
	and \ref{sec:Data_driven_control}.

	\subsection{Noise-free sampled data}
	\label{sec:reconstruction_noise_free_case}
	Based on the sampled data $(x(t_{\ell}))_{\ell = 0}^N$, we define
	$\widehat x \in \Hso([0,\tau];\mathbb{R}^n)$ as the piecewise linear interpolant given by
	\begin{equation}
		\label{eq:hat_x_def}
		\widehat x(t) \coloneqq \frac{t_{\ell+1} - t}{t_{\ell+1} - t_{\ell}}x(t_{\ell}) + 
		\frac{t - t_{\ell}}{t_{\ell+1} - t_{\ell}}x(t_{\ell+1}) 
		,\quad t_{\ell} \leq t < t_{\ell+1},\,\ell=0,\dots,N-1.
	\end{equation}
	We consider the situation where 
	the original signal $x$ is unknown but
	the reconstructed signal $\widehat x$,  
	the maximum sampling interval $h$, and 
	the constants $M,c$ in Assumption~\ref{assump:deri_bound} 
	are available.
	The following proposition shows that 
	the Gramians associated with reconstruction errors are bounded in terms of $h^2$
	and $\widehat x$.
	\begin{proposition}
		\label{prop:PL_app}
		Let $h>0$ be as in \eqref{eq:h_def}, and suppose that $x \in \Hso([0,\tau];\mathbb{R}^n)$ satisfies
		Assumption~\ref{assump:deri_bound}.
		Define $\widehat x \in \Hso([0,\tau];\mathbb{R}^n)$
		by
		\eqref{eq:hat_x_def}. 
		Let $P$ and $Q$ 
		be the synthesis operators associated with $x' - \widehat x'$ and $x - \widehat x$,
		respectively. Then
		\begin{align}
			P P^*  &\preceq h^2
			\left( \frac{
				(M
				\|\widehat x\|_{\Ls} +  c)^2}{(\pi - hM)^2}
			-
			\frac{\|\widehat x'\|_{\Ls}^2}{\pi^2} \right) I_n,
			\label{eq:PPad_bound} \\
			Q Q^*  &\preceq  \frac{ \tau^2 h^2}{\pi^2} 
			\left( \frac{
				(M
				\|\widehat x\|_{\Ls} +  c)^2}{(\pi - hM)^2}
			-
			\frac{\|\widehat x'\|_{\Ls}^2}{\pi^2} \right)I_n.
			\label{eq:QQad_bound} 
		\end{align}
	\end{proposition}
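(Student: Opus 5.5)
The plan is to reduce both Gramian bounds to a single scalar estimate on the error $e \coloneqq x - \widehat x \in \Hso([0,\tau];\mathbb{R}^n)$. The key structural fact is that $e(t_\ell) = 0$ for all $\ell = 0,\dots,N$, so $e$ restricted to each $[t_\ell,t_{\ell+1}]$ lies in $\Hs$ there, componentwise. Applying the Wirtinger (Poincar\'e) inequality on each subinterval, with constant $(t_{\ell+1}-t_\ell)/\pi \le h/\pi$, and summing gives
\[
\|e\|_{\Ls} \le \frac{h}{\pi}\|e'\|_{\Ls}.
\]

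First, I would bound $\|e'\|_{\Ls}$. On each subinterval $\widehat x'$ is the constant $(x(t_{\ell+1})-x(t_\ell))/(t_{\ell+1}-t_\ell)$, which is the mean of $x'$ there. Hence $\int_{t_\ell}^{t_{\ell+1}} \widehat x'(t)^{\top} e'(t)\,dt = 0$, and the Pythagorean identity $\|x'\|_{\Ls}^2 = \|\widehat x'\|_{\Ls}^2 + \|e'\|_{\Ls}^2$ follows. Next, combining Assumption~\ref{assump:deri_bound} with $\|x\|_{\Ls} \le \|\widehat x\|_{\Ls} + \|e\|_{\Ls}$, the Wirtinger bound, and $\|e'\|_{\Ls} \le \|x'\|_{\Ls}$ (from the Pythagorean identity) gives
\[
\|x'\|_{\Ls} \le M\|\widehat x\|_{\Ls} + \frac{hM}{\pi}\|x'\|_{\Ls} + c.
\]
Since $hM<\pi$, this rearranges to $\|x'\|_{\Ls} \le \pi(M\|\widehat x\|_{\Ls}+c)/(\pi-hM)$. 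Therefore
\[
\|e'\|_{\Ls}^2 \le \pi^2\left(\frac{(M\|\widehat x\|_{\Ls}+c)^2}{(\pi-hM)^2} - \frac{\|\widehat x'\|_{\Ls}^2}{\pi^2}\right).
\]

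Then I would translate this into operator bounds. For $v\in\mathbb{R}^n$ we have $v^{\top}PP^*v = \|P^*v\|^2 = \sup_{\|\phi\|_{\Hs}\le 1} (v^{\top}P\phi)^2$, so it suffices to bound $|v^{\top}P\phi|$ and $|v^{\top}Q\phi|$ by $\|v\|\,\|\phi'\|_{\Ls}$ times constants.

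For $P$, integration by parts uses $\phi(0)=\phi(\tau)=0$ and gives $v^{\top}P\phi = \int_0^\tau \phi\, v^{\top}e'\,dt = -\int_0^\tau \phi'\, v^{\top}e\,dt$. Cauchy--Schwarz and the Wirtinger bound on $e$ then give
\[
|v^{\top}P\phi| \le \|\phi'\|_{\Ls}\,\|v\|\,\frac{h}{\pi}\|e'\|_{\Ls}.
\]
Substituting the bound on $\|e'\|_{\Ls}^2$ yields \eqref{eq:PPad_bound}.

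For $Q$, the Poincar\'e inequality on $[0,\tau]$ gives $\|\phi\|_{\Ls} \le (\tau/\pi)\|\phi'\|_{\Ls}$. Hence
\[
|v^{\top}Q\phi| \le \frac{\tau}{\pi}\|\phi'\|_{\Ls}\,\|v\|\,\frac{h}{\pi}\|e'\|_{\Ls},
\]
which yields \eqref{eq:QQad_bound}.

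The main obstacle is the step bounding $\|e'\|_{\Ls}$. It relies on seeing that piecewise linear interpolation is exactly the orthogonal projection of $x'$ onto piecewise constants, which produces the subtracted $\|\widehat x'\|_{\Ls}^2/\pi^2$ term. It also requires closing the implicit inequality for $\|x'\|_{\Ls}$, which is where the condition $M<\pi/h$ enters. The other step needing care is keeping track that the vector-valued Wirtinger inequality holds componentwise, so the constants are unchanged.
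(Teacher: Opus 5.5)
Your proof is correct. The scalar part matches the paper's argument. The paper's Lemma~\ref{lem:x_hatx_diff} is exactly your subinterval Wirtinger inequality combined with the identity $\|x'\|_{\Ls}^2=\|\widehat x'\|_{\Ls}^2+\|e'\|_{\Ls}^2$. The paper also closes the implicit inequality for $\|x'\|_{\Ls}$ using $hM<\pi$, just as you do.

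Where you differ is the passage to Gramian bounds. The paper first records $\|e\|_{\Ls}^2\le (h^2/\pi^2)(\|x'\|_{\Ls}^2-\|\widehat x'\|_{\Ls}^2)$. It then applies Lemma~\ref{lem:synthesis_ad_bound}, i.e.\ $F_{\sd}F_{\sd}^*\preceq\|f\|_{\Ls}^2 I_n$ and $FF^*\preceq(\tau^2/\pi^2)\|f\|_{\Ls}^2 I_n$, with $f=e$; your integration by parts shows that $P$ is precisely the differentiated synthesis operator associated with $e$. That lemma is proved from the explicit formula $F_{\sd}F_{\sd}^*=\int_0^{\tau} f f^{\top}dt-\frac{1}{\tau}\int_0^{\tau} f\,dt\int_0^{\tau} f^{\top}dt$ and the sine-series representation of $F^*$, both imported from \cite{Wakaiki2025Cont}.

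Your argument writes $v^{\top}PP^*v=\sup_{\|\phi'\|_{\Ls}\le 1}(v^{\top}P\phi)^2$, integrates by parts, and applies Cauchy--Schwarz together with the Poincar\'e inequality on $[0,\tau]$. This is an elementary, self-contained proof of the same two estimates. It also makes clear that both constants come from Poincar\'e-type inequalities: one on subintervals of length at most $h$, one on $[0,\tau]$.

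The paper's route, in exchange, isolates a general lemma that is reused later in the noisy-data analysis of Theorem~\ref{thm:noisy_sampled_data}. It also rests on exact representations of the Gramians, which show exactly what is discarded in the bounds: the mean term and the decay of the higher sine modes.
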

	
	Before proceeding to the proof of Proposition~\ref{prop:PL_app}, we present a remark on bounding a block matrix by a block-diagonal matrix.
	Applying the inequalities \eqref{eq:PPad_bound}
	and \eqref{eq:QQad_bound} to 
	this block-diagonal bound yields 
	a diagonal bound for 
	the noise-associated Gramian $WW^*$ given by \eqref{eq:WW}.
	\begin{remark}[Block-diagonal bound]
		\label{rem:young}
		Let $\varepsilon >0$ and 
		$\mu,\nu \in \mathbb{R}^n$ be arbitrary.
		Let $P, Q \in \mathcal{L}(Y,\mathbb{R}^n)$
		for some real Hilbert space $Y$.
		If we define $\phi \in Y$ by
		\[
		\phi \coloneqq 
		\sqrt{\varepsilon} P^*\mu  - \frac{1}{\sqrt{\varepsilon} }
		Q^*\nu,
		\]
		then 
		\[
		0 \leq \|\phi\|_{Y}^2 = 
		\varepsilon  \|P^* \mu\|_{Y}^2 + 
		\frac{\|Q^* \nu\|_{Y}^2}{\varepsilon} - 
		2 \langle P^*\mu, Q^*\nu \rangle_{Y}.
		\]
		Therefore, 
		\[
		2 \langle P^*\mu, Q^*\nu \rangle_{Y} \leq \varepsilon \|P^* \mu\|_{Y}^2 + 
		\frac{\|Q^* \nu\|_{Y}^2}{\varepsilon},
		\]
		which yields
		\[
		\begin{bmatrix}
			PP^* & -PQ^* \\ -QP^* & QQ^*
		\end{bmatrix}\preceq
		\begin{bmatrix}
			(1+\varepsilon)PP^* &  0_{n\times n} \\ 0_{n\times n} & (1+1/\varepsilon)QQ^*
		\end{bmatrix}.
		\]
		Extending this argument, we also obtain
		\[
		\begin{bmatrix}
			PP^* & -PQ^* & -PR^* \\ -QP^* & QQ^* & 
			QR^* \\
			-RP^* & RQ^* & RR^*
		\end{bmatrix}\preceq
		\begin{bmatrix}
			(1+\varepsilon_1 + \varepsilon_2)PP^* &  0_{n\times n} &0_{n\times m} \\ 
			0_{n\times n} & (1+1/\varepsilon_1 + \varepsilon_3)QQ^* & 0_{n\times m} \\
			0_{m\times n} & 0_{m\times n} & (1+1/\varepsilon_2 + 1/\varepsilon_3)RR^*
		\end{bmatrix}
		\]
		for all $\varepsilon_1,\varepsilon_2,\varepsilon_3 >0$, where $R \in \mathcal{L}(Y,\mathbb{R}^m)$.
		\hspace*{\fill} $\triangle$ 
	\end{remark}

	We need two auxiliary results
	for the proof of Proposition~\ref{prop:PL_app}.
	The following lemma provides 
	upper bounds for the Gramians
	in terms of
	the associated $\Ls$-function.
	Note that the operator $P$ considered in Proposition~\ref{prop:PL_app} 
	coincides with
	the negative of the differentiated synthesis operator associated with
	$x-\widehat x$.
	\begin{lemma}
		\label{lem:synthesis_ad_bound}
		Let $F$ and $F_{\sd}$ be 
		the synthesis operator and 
		the differentiated synthesis operator associated 
		with $f \in \Ls([0,\tau];\mathbb{R}^n)$. Then
		\begin{align}
			F_{\sd}F_{\sd }^* &\preceq
			\|f\|_{\Ls}^2 I_n , \label{eq:TdTd_bound}\\
			FF^* &\preceq
			\frac{\tau^2}{\pi^2} \|f\|_{\Ls}^2 I_n. 
			\label{eq:TT_bound}
		\end{align}
	\end{lemma}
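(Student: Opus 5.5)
Since both $F_{\sd}F_{\sd}^*$ and $FF^*$ are $n\times n$ symmetric nonnegative definite matrices, it suffices to show that for every $\mu\in\mathbb{R}^n$,
\[
\|F_{\sd}^*\mu\|_{\Hs}^2 \leq \|f\|_{\Ls}^2\|\mu\|^2
\quad\text{and}\quad
\|F^*\mu\|_{\Hs}^2 \leq \frac{\tau^2}{\pi^2}\|f\|_{\Ls}^2\|\mu\|^2 .
\]
Equivalently, we can bound the operator norms, $\|F_{\sd}\|\le \|f\|_{\Ls}$ and $\|F\|\le (\tau/\pi)\|f\|_{\Ls}$, because $\langle FF^*\mu,\mu\rangle = \|F^*\mu\|^2\le \|F\|^2\|\mu\|^2$. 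We therefore estimate $|\langle \mu, F\phi\rangle|$ and $|\langle\mu,F_{\sd}\phi\rangle|$ for $\phi\in\Hs[0,\tau]$ with $\|\phi\|_{\Hs}=\|\phi'\|_{\Ls}\le 1$.

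For \eqref{eq:TdTd_bound}, we have $\langle \mu, F_{\sd}\phi\rangle = -\int_0^\tau \phi'(t)\,\mu^\top f(t)\,dt$. The Cauchy--Schwarz inequality in $\Ls$ gives
\[
|\langle \mu, F_{\sd}\phi\rangle| \le \|\phi'\|_{\Ls}\,\|\mu^\top f\|_{\Ls},
\]
and pointwise Cauchy--Schwarz gives $|\mu^\top f(t)|\le\|\mu\|\,\|f(t)\|$, so $\|\mu^\top f\|_{\Ls}\le \|\mu\|\,\|f\|_{\Ls}$. Taking the supremum over $\phi$ and $\mu$ yields $\|F_{\sd}\|\le\|f\|_{\Ls}$.

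For \eqref{eq:TT_bound}, the same computation gives
\[
|\langle\mu,F\phi\rangle|\le \|\phi\|_{\Ls}\,\|\mu\|\,\|f\|_{\Ls}.
\]
We then apply the Poincaré (Wirtinger) inequality on $\Hs[0,\tau]$: $\|\phi\|_{\Ls}\le (\tau/\pi)\|\phi'\|_{\Ls}$. This is the only nontrivial ingredient, and $\tau^2/\pi^2$ is exactly the reciprocal of the smallest Dirichlet eigenvalue $(\pi/\tau)^2$ of $-d^2/dt^2$ on $[0,\tau]$. To prove it, I would expand $\phi$ in the sine basis $\sqrt{2/\tau}\sin(k\pi t/\tau)$, which is complete in $\Ls[0,\tau]$. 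For $\phi\in\Hs$, integration by parts (the boundary terms vanish) shows that the cosine coefficients of $\phi'$ equal $k\pi/\tau$ times the sine coefficients of $\phi$. Parseval's identity then gives
\[
\|\phi'\|_{\Ls}^2 \ge \sum_k (k\pi/\tau)^2 |c_k|^2 \ge (\pi/\tau)^2 \|\phi\|_{\Ls}^2 .
\]
Alternatively, one could simply cite a standard reference for this inequality, which is presumably also how the paper's appendix on the frequency interpretation of $\|W\|$ proceeds. Combining gives $\|F\|\le(\tau/\pi)\|f\|_{\Ls}$, and squaring yields \eqref{eq:TT_bound}. The main obstacle is thus only securing the sharp constant in the Poincaré inequality; everything else is Cauchy--Schwarz.
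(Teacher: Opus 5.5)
Your proof is correct, but it takes a different route from the paper.

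\textbf{The paper's route.} The paper works on the adjoint side.
\begin{itemize}
\item For \eqref{eq:TdTd_bound} it quotes the explicit Gramian formula $F_{\sd}F_{\sd}^* = \int_0^\tau f f^\top dt - \frac{1}{\tau}\int_0^\tau f\,dt\int_0^\tau f^\top dt$ from earlier work, and simply drops the nonnegative mean term.
\item For \eqref{eq:TT_bound} it quotes the sine-series representation of $F^*\mu$. This is the Dirichlet solution of $-(F^*\mu)''=f^\top\mu$, and it gives $\mu^\top FF^*\mu=\frac{\tau^2}{\pi^2}\sum_j \langle f^\top\mu,\psi_j\rangle_{\Ls}^2/j^2$. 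The paper then bounds $1/j^2\le 1$.
\end{itemize}

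\textbf{Your route.} You instead bound $\|F_{\sd}\|$ and $\|F\|$ directly on the primal side, using Cauchy--Schwarz.
\begin{itemize}
\item For $F_{\sd}$ you use $\|\phi\|_{\Hs}=\|\phi'\|_{\Ls}$.
\item For $F$ you use the sharp Wirtinger inequality $\|\phi\|_{\Ls}\le(\tau/\pi)\|\phi'\|_{\Ls}$.
\end{itemize}
The same spectral fact drives both proofs: the first Dirichlet eigenvalue is $(\pi/\tau)^2$. In your argument it enters through the test function $\phi$; in the paper's it enters through the eigen-expansion of $F^*\mu$.

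\textbf{What each buys.} Your version is self-contained and needs no adjoint computations. The Wirtinger inequality is also already used in the paper's Lemma~\ref{lem:x_hatx_diff}, so citing it is consistent. The paper's version reuses formulas it relies on anyway. It also makes the discarded slack explicit: the mean term $\frac{1}{\tau}\bigl|\int_0^\tau f^\top\mu\,dt\bigr|^2$, and the damping factors $1/j^2$ for $j\ge 2$. That ties in with the frequency interpretation in the appendix. Incidentally, the appendix relies on this sine-series representation rather than on a Poincar\'e inequality, so your guess about how it proceeds is off, though this does not affect your argument.
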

	\begin{proof}
		By \cite[Lemma~2.3.b)]{Wakaiki2025Cont},
		\[
		F_{\sd} F_{\sd}^* =
		\int_0^{\tau} f(t)f(t)^{\top} dt
		-
		\frac{1}{\tau}
		\int_0^{\tau} f(t)dt
		\int_0^{\tau} f(t)^{\top}dt.
		\]
		Let $\mu \in \mathbb{R}^n$ be arbitrary.
		We have
		\[
		\mu^{\top} F_{\sd} F_{\sd}^* \mu =
		\int_0^{\tau} |f(t)^{\top} \mu|^2 dt- 
		\frac{1}{\tau}
		\left|
		\int_0^{\tau} f(t)^{\top} \mu dt
		\right|^2 \leq \int_0^{\tau} |f(t)^{\top} \mu|^2 dt.
		\]
		Since
		\begin{equation}
			\label{eq:fmu_L2}
			\int_0^{\tau} |f(t)^{\top} \mu|^2 dt \leq 
			\int_0^{\tau} 
			\|f(t)\|^2 \|\mu\|^2 dt \leq \|f\|_{\Ls}^2\|\mu\|^2,
		\end{equation}
		we obtain \eqref{eq:TdTd_bound}.
		
		Define
		the orthonormal basis $(\psi_j)_{j \in \mathbb{N}}$ of $\Ls[0,\tau]$ by
		\[
		\psi_j(t) \coloneqq \sqrt{\frac{2}{\tau}} \sin\left(
		\frac{j \pi }{\tau} t
		\right),\quad t \in [0,\tau].
		\]
		As seen in the proof of  \cite[Proposition~A]{Wakaiki2025Cont},
		we have
		\begin{equation}
			\label{eq:T_ad_series}
			(F^*\mu)(t) 
			= \frac{\tau^2}{\pi^2} \sum_{j=1}^{\infty}
			\frac{\langle f(\cdot)^{\top}\mu, \psi_j \rangle_{\Ls} }{j^2} \psi_j(t)
		\end{equation}
		for all $t \in [0,\tau]$.
		For each $j \in \mathbb{N}$,
		\begin{equation}
			\label{eq:T_psi}
			\mu^{\top} F\psi_j = 
			\mu^{\top} 
			\int_0^{\tau} \psi_j(t)f(t)dt 
			=\langle f(\cdot)^{\top}\mu,\psi_j \rangle_{\Ls}.
		\end{equation}
		By \eqref{eq:T_ad_series} and \eqref{eq:T_psi},
		\begin{equation}
			\label{eq:muTTmu}
			\mu^{\top} FF^*\mu =
			\frac{\tau^2}{\pi^2} \sum_{j=1}^{\infty}
			\frac{\langle f(\cdot)^{\top}\mu, \psi_j \rangle_{\Ls}^2 }{j^2} \leq 
			\frac{\tau^2}{\pi^2} \|f(\cdot)^{\top} \mu\|_{\Ls}^2.
		\end{equation}
		Combining \eqref{eq:fmu_L2} and \eqref{eq:muTTmu},
		we conclude that \eqref{eq:TT_bound} holds.
	\end{proof}
	
	The next result gives an upper bound for
	the reconstruction error $x - \widehat x$.
	\begin{lemma}
		\label{lem:x_hatx_diff}
		Let $h>0$ be as in \eqref{eq:h_def}, and 
		define $\widehat x \in \Hso([0,\tau];\mathbb{R}^n)$
		by
		\eqref{eq:hat_x_def}. 
		Then
		\begin{equation}
			\label{eq:x_hat_x_L2}
			\|x - \widehat x\|_{\Ls}^2 \leq 
			\frac{h^2}{\pi^2} (\|x'\|_{\Ls}^2 - \|\widehat x'\|_{\Ls}^2).
		\end{equation}
	\end{lemma}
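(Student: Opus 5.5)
We will prove the estimate interval by interval and then sum. Set $e \coloneqq x - \widehat x$. By the definition \eqref{eq:hat_x_def}, $e$ vanishes at every sampling point $t_\ell$. Hence the restriction of each component of $e$ to $[t_\ell,t_{\ell+1}]$ belongs to $\Hs[t_\ell,t_{\ell+1}]$. The Poincaré (Wirtinger) inequality with the sharp Dirichlet constant $\pi^2/(t_{\ell+1}-t_\ell)^2$ then gives
\[
\int_{t_\ell}^{t_{\ell+1}} \|e(t)\|^2 dt \leq \frac{(t_{\ell+1}-t_\ell)^2}{\pi^2}\int_{t_\ell}^{t_{\ell+1}} \|e'(t)\|^2dt \leq \frac{h^2}{\pi^2}\int_{t_\ell}^{t_{\ell+1}} \|e'(t)\|^2dt .
\]
This inequality can be justified directly, as in the proof of Lemma~\ref{lem:synthesis_ad_bound}. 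We expand each component in the sine basis $\sqrt{2/d}\sin(j\pi (t-t_\ell)/d)$, where $d=t_{\ell+1}-t_\ell$. The derivative then expands in the corresponding cosine system, and Parseval's identity yields the inequality since $j\geq 1$. Summing over $\ell$ gives $\|e\|_{\Ls}^2 \leq (h^2/\pi^2)\|x'-\widehat x'\|_{\Ls}^2$.

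The key remaining step is to show the Pythagorean identity $\|x'-\widehat x'\|_{\Ls}^2 = \|x'\|_{\Ls}^2 - \|\widehat x'\|_{\Ls}^2$. This amounts to showing that $\langle x'-\widehat x', \widehat x'\rangle_{\Ls}=0$. On each interval $[t_\ell,t_{\ell+1}]$, the function $\widehat x'$ is the constant vector $(x(t_{\ell+1})-x(t_\ell))/(t_{\ell+1}-t_\ell)$. We also have
\[
\int_{t_\ell}^{t_{\ell+1}} (x'(t)-\widehat x'(t))dt = e(t_{\ell+1})-e(t_\ell) = 0
\]
by absolute continuity of $e$. Hence the integral of $(x'-\widehat x')^\top \widehat x'$ vanishes on each subinterval, and therefore on $[0,\tau]$. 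Expanding $\|x'\|^2 = \|(x'-\widehat x')+\widehat x'\|^2$ gives the identity. Combining it with the Poincaré estimate yields \eqref{eq:x_hat_x_L2}.

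The only delicate point is the sharp Poincaré constant on each subinterval together with the regularity needed to apply it. We need $e\in \Hso$, which holds because both $x$ and $\widehat x$ are in $\Hso([0,\tau];\mathbb{R}^n)$, and we need the zero boundary values, which hold by construction. The rest is bookkeeping, summing the interval-wise bounds and using $t_{\ell+1}-t_\ell\leq h$.
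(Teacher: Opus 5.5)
Your proof is correct and follows essentially the same route as the paper. The paper also applies the Wirtinger inequality on each subinterval where $x-\widehat x$ vanishes at the endpoints. It then uses that $\widehat x'$ is piecewise constant, computing the cross term $\int_{t_\ell}^{t_{\ell+1}} f'\widehat f'\,dt=\int_{t_\ell}^{t_{\ell+1}}|\widehat f'|^2dt$ componentwise, to get the Pythagorean identity before summing. The only cosmetic differences are that you work with vectors rather than components, and you sum the Poincaré bounds first and apply the orthogonality globally.
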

	\begin{proof}
		Fix $i\in \{1,\dots,n\}$.
		For $t \in [0,\tau]$,
		let $f(t)$ and $\widehat f(t)$ be the $i$-th
		elements of $x(t)$ and $\widehat x(t)$, respectively.
		It suffices to show that 
		\begin{equation}
			\label{eq:xi_hatxi_diff}
			\|f - \widehat f\|_{\Ls}^2 \leq 
			\frac{h^2}{\pi^2} (\|f'\|_{\Ls}^2 - \|\widehat f'\|_{\Ls}^2).
		\end{equation}

		Let $\ell \in \{ 0,\dots, N-1\}$ be arbitrary.
		Since $f - \widehat f \in \Hs[t_{\ell},t_{\ell+1}]$, the  Wirtinger inequality (see, e.g., 
		\cite[Sec.~1.7.2]{Dym1972}) yields
		\begin{equation}
			\label{eq:subinterval_bound}
			\int_{t_{\ell}}^{t_{\ell+1}} |f(t) - \widehat f(t)|^2dt \leq \frac{(t_{\ell+1}-t_{\ell})^2}{\pi^2} 
			\int_{t_{\ell}}^{t_{\ell+1}} |f'(t) - \widehat f'(t)|^2dt .
		\end{equation}
		We have
		\begin{equation}
			\label{eq:f'_hatf'_subinterval}
			\int_{t_{\ell}}^{t_{\ell+1}} |f'(t) - \widehat f'(t)|^2dt=
			\int_{t_{\ell}}^{t_{\ell+1}} |f'(t)|^2dt - 
			2 \int_{t_{\ell}}^{t_{\ell+1}} f'(t) \widehat f'(t)dt + 
			\int_{t_{\ell}}^{t_{\ell+1}} |\widehat f'(t)|^2dt.
		\end{equation}
		Since $\widehat f'$ is constant on $[t_{\ell},t_{\ell+1})$, 
		the cross term satisfies
		\[
		\int_{t_{\ell}}^{t_{\ell+1}} f'(t) \widehat f'(t)dt = 
		\frac{f(t_{\ell+1}) - f(t_{\ell})}{t_{\ell+1}-t_{\ell}}\int_{t_{\ell}}^{t_{\ell+1}} f'(t)dt =
		\frac{|f(t_{\ell+1}) - f(t_{\ell})|^2}{t_{\ell+1} - t_{\ell}} = 
		\int_{t_{\ell}}^{t_{\ell+1}} |\widehat f'(t)|^2dt.
		\]
		Substituting this into \eqref{eq:f'_hatf'_subinterval}, we obtain
		\begin{equation}
			\label{eq:subinterval_eq}
			\int_{t_{\ell}}^{t_{\ell+1}} |f'(t) - \widehat f'(t)|^2dt =
			\int_{t_{\ell}}^{t_{\ell+1}} |f'(t)|^2dt -
			\int_{t_{\ell}}^{t_{\ell+1}} |\widehat f'(t)|^2dt .
		\end{equation}
		By \eqref{eq:subinterval_bound} and \eqref{eq:subinterval_eq},
		\[
		\int_{t_{\ell}}^{t_{\ell+1}} |f(t) - \widehat f(t)|^2dt  \leq \frac{(t_{\ell+1}-t_{\ell})^2}{\pi^2} 
		\left(\int_{t_{\ell}}^{t_{\ell+1}} |f'(t)|^2dt -
		\int_{t_{\ell}}^{t_{\ell+1}} |\widehat f'(t)|^2dt \right).
		\]
		Since $\ell \in \{ 0,\dots, N-1\}$ is arbitrary, it follows that 
		\begin{align*}
			\int_{0}^{\tau}
			|f(t) - \widehat f(t)|^2dt &=
			\sum_{\ell =0}^{N-1} \int_{t_{\ell}}^{t_{\ell+1}} |f(t) - \widehat f(t)|^2dt  \\
			&\leq 
			\frac{h^2}{\pi^2} 
			\sum_{\ell =0}^{N-1} \left(\int_{t_{\ell}}^{t_{\ell+1}} |f'(t)|^2dt -
			\int_{t_{\ell}}^{t_{\ell+1}} |\widehat f'(t)|^2dt \right) \\
			&= \frac{h^2}{\pi^2} 
			\left(\int_{0}^{\tau}
			|f'(t)|^2dt -\int_{0}^{\tau}
			|\widehat f'(t)|^2dt 
			\right).
		\end{align*}
		Thus, the assertion \eqref{eq:xi_hatxi_diff} holds.
	\end{proof}
	
	After these preparations,
	we are in a position to prove Proposition~\ref{prop:PL_app}.
	\begin{proof}[Proof of Proposition~\ref{prop:PL_app}]
		Lemma~\ref{lem:x_hatx_diff} shows that
		\[
		\|x - \widehat x\|_{\Ls} 
		\leq \frac{h}{\pi}\|x'\|_{\Ls}.
		\]
		Combining this with the assumption
		\eqref{eq:x_deriv_bound}, we obtain
		\begin{align*}
			\|\widehat x\|_{\Ls} 
			&\geq \|x\|_{\Ls} - 
			\|x - \widehat x\|_{\Ls} \\
			&\geq \|x\|_{\Ls} - 
			\frac{h}{\pi}
			\|x'\|_{\Ls}  \\
			& \geq 
			\left(
			1 - \frac{hM}{\pi}
			\right) \|x\|_{\Ls} -  \frac{h}{\pi}c.
		\end{align*}
		Since $hM< \pi$, it follows that
		\[
		\|x\|_{\Ls} \leq \frac{\pi}{\pi - hM} \|\widehat x\|_{\Ls} +
		\frac{h}{\pi - hM}c.
		\]
		Using again the assumption~\eqref{eq:x_deriv_bound}, we obtain
		\begin{equation}
			\label{eq:hatx_bound}
			\|x'\|_{\Ls} \leq M \left(
			\frac{\pi }{\pi - hM} \|\widehat x\|_{\Ls} +
			\frac{h}{\pi - hM}c
			\right) + c =
			\frac{\pi }{\pi - hM} (M \|\widehat x\|_{\Ls} + c).
		\end{equation}
		By Lemma~\ref{lem:x_hatx_diff} and 
		\eqref{eq:hatx_bound}, 
		\begin{equation}
			\label{eq:x_hatx_diff_bound}
			\|x-\widehat x\|_{\Ls}^2 \leq \frac{h^2}{\pi^2}\left( 
			\|x'\|_{\Ls}^2 - \|\widehat x'\|_{\Ls}^2\right)
			\leq h^2 \left( 
			\frac{
				(M
				\|\widehat x\|_{\Ls} +  c)^2}{(\pi - hM)^2}
			-
			\frac{\|\widehat x'\|_{\Ls}^2}{\pi^2}\right).
		\end{equation}
		From Lemma~\ref{lem:synthesis_ad_bound} and
		\eqref{eq:x_hatx_diff_bound}, we conclude that 
		the assertions 
		\eqref{eq:PPad_bound} 
		and \eqref{eq:QQad_bound}  hold.
	\end{proof}
	
	\subsection{Noisy sampled data}
	\label{sec:reconstruction_noisy_case}
	We now consider the scenario where
	the sampled data are corrupted by measurement noise.
	We denote by
	$\delta_{\ell,i}$ the discrete noise added to
	the $i$-th element of the sampled data $x(t_{\ell})$.
	Let $\delta_{\ell,i}$ satisfy 
	$|\delta_{\ell,i} | \leq \widebar \delta_i$
	for all $\ell=0,\dots,N$ and $i=1,\dots,n$, where 
	$\widebar \delta_i >0$ is a known constant. 
	We set 
	\[\delta_{\ell} \coloneqq 
	\begin{bmatrix}
		\delta_{\ell,1} & \cdots & 
		\delta_{\ell,n}
	\end{bmatrix}^{\top}\quad \text{and} \quad 
	\widebar \delta \coloneqq 
	\begin{bmatrix}
		\widebar \delta_1 & \cdots & 
		\widebar \delta_n
	\end{bmatrix}^{\top}.
	\]
	Using
	the noisy sampled data $(x(t_{\ell}) + \delta_{\ell})_{\ell=0}^N$,
	we define the piecewise linear function $\widebar x \in \Hso([0,\tau];\mathbb{R}^n)$ by
	\begin{equation}
		\label{eq:tilde_x_def}
		\widebar x(t) \coloneqq \frac{t_{\ell+1} - t}{t_{\ell+1} - t_{\ell}}(x(t_{\ell})+\delta_{\ell}) + 
		\frac{t - t_{\ell}}{t_{\ell+1} - t_{\ell}}(x(t_{\ell+1})+\delta_{\ell+1}) 
		,\quad t_{\ell} \leq t < t_{\ell+1},\,\ell=0,\dots,N-1.
	\end{equation}
	Instead of the noise-free signal $\widehat{x}$ defined by \eqref{eq:hat_x_def}, we assume that only the noise bound $\widebar{\delta}$ and the signal $\widebar{x}$ reconstructed from the noisy data are available.
	
	We start by investigating the error $\widehat x - \widebar x$
	due to the measurement noise.
	Recall that the following 
	inequality holds between arithmetic and
	geometric means:
	\begin{equation}
		\label{eq:Ineq_AGM}
		\sqrt{ab} \leq \frac{a+b}{2},\quad a,b \geq 0,
	\end{equation}
	where equality holds if and only if $a=b$.
	
	\begin{lemma}
		\label{lem:til_hat_x_diff}
		Let $h>0$ be as in \eqref{eq:h_def}, and 
		define $\widehat x,\widebar x \in \Hso([0,\tau];\mathbb{R}^n)$
		by
		\eqref{eq:hat_x_def} and \eqref{eq:tilde_x_def},
		respectively.
		Then 
		\begin{equation}
			\label{eq:til_hat_x_diff}
			\|\widehat x - \widebar x\|_{\Ls} \leq \sqrt{\tau} \|\widebar{\delta}\|_{\mathbb{R}^n}.
		\end{equation}
	\end{lemma}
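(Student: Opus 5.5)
Since both $\widehat x$ and $\widebar x$ are built by the same interpolation rule, their difference is the piecewise linear interpolant of the noise alone. For $t_{\ell}\le t<t_{\ell+1}$ we have
\[
\widebar x(t)-\widehat x(t) = \frac{t_{\ell+1}-t}{t_{\ell+1}-t_{\ell}}\,\delta_{\ell} + \frac{t-t_{\ell}}{t_{\ell+1}-t_{\ell}}\,\delta_{\ell+1}.
\]
The plan is to work componentwise and bound this function pointwise.

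Fix $i\in\{1,\dots,n\}$. On each subinterval the $i$-th component of $\widebar x(t)-\widehat x(t)$ is a convex combination of $\delta_{\ell,i}$ and $\delta_{\ell+1,i}$, because the two weights $\lambda=(t_{\ell+1}-t)/(t_{\ell+1}-t_{\ell})\in(0,1]$ and $1-\lambda$ are nonnegative and sum to one. Hence its absolute value is at most $\lambda|\delta_{\ell,i}|+(1-\lambda)|\delta_{\ell+1,i}|\le \widebar\delta_i$ for every $t\in[0,\tau]$, with the endpoint $t=\tau$ handled by continuity.

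Summing over components gives $\|\widehat x(t)-\widebar x(t)\|_{\mathbb{R}^n}^2\le \sum_i \widebar\delta_i^2=\|\widebar\delta\|_{\mathbb{R}^n}^2$ for all $t$. Integrating over $[0,\tau]$ yields $\|\widehat x-\widebar x\|_{\Ls}^2\le \tau\|\widebar\delta\|_{\mathbb{R}^n}^2$, and taking square roots gives \eqref{eq:til_hat_x_diff}.

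There is no real obstacle here. The only care needed is to apply the convexity bound componentwise, rather than bounding the vector norm by $\max\{\|\delta_\ell\|,\|\delta_{\ell+1}\|\}$, which would also work via convexity of the norm. The AM--GM inequality \eqref{eq:Ineq_AGM} recalled just before the lemma is not needed for this crude bound; presumably it is used later to sharpen estimates involving $\widebar x'$. If a sharper constant were wanted, one could instead compute $\int(\lambda a+(1-\lambda)b)^2=(t_{\ell+1}-t_\ell)(a^2+ab+b^2)/3\le (t_{\ell+1}-t_\ell)\max(a^2,b^2)$, but the pointwise argument already suffices.
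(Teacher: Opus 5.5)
Your proof is correct, but it takes a different route from the paper's.

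\textbf{The paper's argument.} The paper fixes a component $i$ and computes, on each subinterval, the exact integral of the square of the linear function $\widebar x_i-\widehat x_i$. This gives $\frac{t_{\ell+1}-t_\ell}{3}(\delta_{\ell,i}^2+\delta_{\ell,i}\delta_{\ell+1,i}+\delta_{\ell+1,i}^2)$. It then uses the AM--GM inequality \eqref{eq:Ineq_AGM} to show that the bracket is at most $3\widebar\delta_i^2$ on the box $[-\widebar\delta_i,\widebar\delta_i]^2$, and sums over $\ell$ and $i$.

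\textbf{Your argument.} You bound the integrand pointwise: on each subinterval the error is a convex combination of $\delta_\ell$ and $\delta_{\ell+1}$, so $|\widebar x_i(t)-\widehat x_i(t)|\le\widebar\delta_i$ everywhere, and you then integrate.

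\textbf{Comparison.} Your route is shorter and needs neither the explicit integral nor AM--GM. The paper's route yields the exact per-interval $L^2$ error. That could be exploited if more than a box bound on the noise were known, for instance information about the sign of $\delta_{\ell,i}\delta_{\ell+1,i}$.

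\textbf{Correction to your closing remarks.} Your bound is not ``crude'', and the exact integral does not give a sharper constant. Taking $\delta_\ell=\widebar\delta$ for all $\ell$ gives $\widebar x-\widehat x\equiv\widebar\delta$, so equality holds in \eqref{eq:til_hat_x_diff}. Your own inequality $(a^2+ab+b^2)/3\le\max(a^2,b^2)$ and the paper's maximization both arrive at exactly your constant.

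\textbf{Use of AM--GM in the paper.} Your guess about its role is only partly right. The paper uses it in this very lemma, for the maximization step. It uses it again later to optimize $\varepsilon_1,\varepsilon_2,\varepsilon_3$ in Theorem~\ref{thm:noisy_sampled_data}.
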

	\begin{proof}
		Fix $i \in \{ 1,\dots, n\}$.
		Let 
		$\widehat x_i(t)$ and $\widebar x_i(t)$ be
		the $i$-th element of $\widehat x(t)$ and 
		$\widebar x(t)$ for $t \in [0,\tau]$, respectively.
		It suffices to show that
		\begin{equation}
			\label{eq:hxi_txi_error}
			\int_0^{\tau} |\widehat x_i(t) - \widebar x_i(t)|^2dt 
			\leq
			\tau \widebar{\delta}_i^2.
		\end{equation}
		Observe first that 
		\begin{equation}
			\label{eq:integral_decomp}
			\int_0^{\tau} |\widehat x_i(t) - \widebar x_i(t)|^2dt 
			=
			\sum_{\ell=0}^{N-1}
			\int_{t_{\ell}}^{t_{\ell+1}} 
			|\widehat x_i(t) - \widebar x_i(t)|^2dt.
		\end{equation}
		Let $\ell \in \{ 0,\dots,N-1\}$.
		Since
		\[
		\widebar x_i(t) - \widehat x_i(t) =
		\frac{\delta_{\ell+1,i} - \delta_{\ell,i}}{t_{\ell+1} - t_{\ell}}
		t + 
		\frac{t_{\ell+1}\delta_{\ell,i} - t_{\ell}\delta_{\ell+1,i}}{t_{\ell+1} - t_{\ell}}
		\]
		for all $t \in [t_{\ell},t_{\ell+1})$,
		a routine calculation shows that
		\begin{equation}
			\label{eq:int_hxi_txi}
			\int_{t_{\ell}}^{t_{\ell+1}} 
			|\widehat x_i(t) - \widebar x_i(t)|^2dt =
			\frac{t_{\ell+1} - t_{\ell}}{3} (\delta_{\ell,i}^2 + 
			\delta_{\ell,i}\delta_{\ell+1,i} + 
			\delta_{\ell+1,i}^2
			).
		\end{equation}
		By \eqref{eq:Ineq_AGM} with $(a,b) = (\delta_{\ell,i}^2,
		\delta_{\ell+1,i}^2)$, we obtain
		\begin{equation}
			\label{eq:max_td}
			\max_{-\widebar \delta_i \leq \delta_{\ell,i},\delta_{\ell+1,i} \leq \widebar \delta_i}(
			\delta_{\ell,i}^2 + 
			\delta_{\ell,i}\delta_{\ell+1,i} + 
			\delta_{\ell+1,i}^2 ) = 3 \widebar \delta_i^2.
		\end{equation}
		From \eqref{eq:int_hxi_txi} and \eqref{eq:max_td},
		it follows that
		\[
		\int_{t_{\ell}}^{t_{\ell+1}} 
		|\widehat x_i(t) - \widebar x_i(t)|^2dt \leq 
		(t_{\ell+1} - t_{\ell} ) \widebar{\delta}_i^2.
		\]
		Combining this with \eqref{eq:integral_decomp},
		we conclude that 
		the assertion \eqref{eq:hxi_txi_error} holds.
	\end{proof}
	
	As the sampling interval decreases,
	the impact of the noise $(\delta_{\ell})_{\ell=0}^N$ 
	on the reconstructed derivative $\widebar x'$
	increases.
	Despite this sensitivity,
	the Gramian
	associated with
	the error derivative $x' - \widebar x'$ has
	an upper bound involving  $h^2$, $\|\widebar \delta\|_{\mathbb{R}^n}^2$, and $h\|\widebar \delta\|_{\mathbb{R}^n}$.
	The following theorem 
	provides this bound,
	incorporating
	the process noise $v$ in the context of data-driven control.
	\begin{theorem}
		\label{thm:noisy_sampled_data}
		Let $h>0$ be as in \eqref{eq:h_def}, and suppose that $x \in \Hso([0,\tau];\mathbb{R}^n)$ satisfies
		Assumption~\ref{assump:deri_bound}.
		Define $\widebar x \in \Hso([0,\tau];\mathbb{R}^n)$
		by
		\eqref{eq:tilde_x_def}. 
		Let $\sigma >0$ and $v \in \Ls([0,\tau];\mathbb{R}^n)$
		satisfy $VV^* \preceq \sigma^2 I_n$, where $V$ is the synthesis
		operator associated with $v$.
		Then 
		the synthesis operators $\widebar{P}$ and 
		$\widebar{Q}$
		associated with 
		$x' - \widebar x' + v$ and $x - \widebar x$,
		respectively, satisfy
		\begin{align}
			\widebar P \widebar P^* &\preceq
			\left(
			\frac{ h(M
				\|\widebar x\|_{\Ls}  +  c)}{\pi - hM}
			+
			\frac{\pi  \sqrt{\tau} \|\widebar{\delta}\|_{\mathbb{R}^n} }{\pi - hM} + \sigma
			\right)^2 I_n,
			\label{eq:tilP_ad_bound_coro}\\
			\widebar{Q}\widebar{Q}^* &\preceq
			\frac{\tau^2}{\pi^2}
			\left(
			\frac{ h(M
				\|\widebar x\|_{\Ls}  +  c)}{\pi - hM}
			+
			\frac{\pi  \sqrt{\tau} \|\widebar{\delta}\|_{\mathbb{R}^n}}{\pi - hM} 
			\right)^2 I_n.
			\label{eq:tilQ_ad_bound_coro}
		\end{align}
	\end{theorem}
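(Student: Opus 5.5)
The plan is to reduce both Gramian bounds to a single scalar estimate on $e \coloneqq \|x - \widebar x\|_{\Ls}$, and then use norm bounds on synthesis operators. Recall that for an operator $F \in \mathcal{L}(\Hs[0,\tau],\mathbb{R}^n)$ and $\kappa \geq 0$, $FF^* \preceq \kappa^2 I_n$ is equivalent to $\|F\| \leq \kappa$. It therefore suffices to prove
\[
\|\widebar P\| \leq e + \sigma, \qquad \|\widebar Q\| \leq \frac{\tau}{\pi} e,
\]
together with the estimate
\[
e \leq \frac{h(M\|\widebar x\|_{\Ls} + c) + \pi\sqrt{\tau}\|\widebar\delta\|_{\mathbb{R}^n}}{\pi - hM}.
\]

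\emph{Operator bounds.} The bound on $\widebar Q$ is immediate from \eqref{eq:TT_bound} in Lemma~\ref{lem:synthesis_ad_bound} with $f = x - \widebar x$.

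For $\widebar P$, first note that the synthesis operator of $x' - \widebar x' + v$ is the sum of the synthesis operator $P_{\sd}$ associated with $x' - \widebar x'$ and $V$. Hence $\|\widebar P\| \leq \|P_{\sd}\| + \|V\| \leq \|P_{\sd}\| + \sigma$.

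Next, $x - \widebar x \in \Hso([0,\tau];\mathbb{R}^n)$, although it need not vanish at the endpoints because $\delta_0,\delta_N \neq 0$. Since every test function $\phi \in \Hs[0,\tau]$ vanishes at $0$ and $\tau$, integration by parts still gives
\[
\int_0^\tau \phi(t)\,(x'(t) - \widebar x'(t))\,dt = -\int_0^\tau \phi'(t)\,(x(t) - \widebar x(t))\,dt.
\]
Thus $P_{\sd}$ is precisely the differentiated synthesis operator associated with $x - \widebar x$. By \eqref{eq:TdTd_bound} we get $\|P_{\sd}\| \leq e$, and hence $\widebar P\widebar P^* \preceq (e + \sigma)^2 I_n$.

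\emph{Scalar estimate.} This is the main step, because only $\widebar x$ is available, not $x$ or $\widehat x$. I will split
\[
x - \widebar x = (x - \widehat x) + (\widehat x - \widebar x).
\]
Lemma~\ref{lem:x_hatx_diff} (dropping the negative term) gives $\|x - \widehat x\|_{\Ls} \leq \frac{h}{\pi}\|x'\|_{\Ls}$, and Lemma~\ref{lem:til_hat_x_diff} gives $\|\widehat x - \widebar x\|_{\Ls} \leq \sqrt{\tau}\|\widebar\delta\|_{\mathbb{R}^n}$.

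Then I apply Assumption~\ref{assump:deri_bound} and $\|x\|_{\Ls} \leq \|\widebar x\|_{\Ls} + e$ to obtain a self-referential inequality:
\[
e \leq \frac{h}{\pi}\bigl(M\|\widebar x\|_{\Ls} + Me + c\bigr) + \sqrt{\tau}\|\widebar\delta\|_{\mathbb{R}^n}.
\]
Since $hM < \pi$, I move the $e$ term to the left, which gives $(1 - hM/\pi)\,e \leq \frac{h}{\pi}(M\|\widebar x\|_{\Ls} + c) + \sqrt{\tau}\|\widebar\delta\|_{\mathbb{R}^n}$. Multiplying through by $\pi/(\pi - hM)$ yields exactly the desired bound on $e$.

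Substituting this bound into the two operator bounds gives \eqref{eq:tilP_ad_bound_coro} and \eqref{eq:tilQ_ad_bound_coro}.
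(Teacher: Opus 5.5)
Your proof is correct and gives exactly the stated constants. It assembles the pieces differently from the paper.

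The paper works at the Gramian level. It writes $\widebar P = P + \widebar P_0 + V$, where $P$ and $\widebar P_0$ are the synthesis operators of $x'-\widehat x'$ and $\widehat x'-\widebar x'$. It bounds $PP^*$ through Proposition~\ref{prop:PL_app}, after replacing $\|\widehat x\|_{\Ls}$ by $\|\widebar x\|_{\Ls}+\sqrt{\tau}\|\widebar\delta\|_{\mathbb{R}^n}$, and bounds $\widebar P_0\widebar P_0^*$ by $\tau\|\widebar\delta\|_{\mathbb{R}^n}^2 I_n$. It then controls the cross terms with Young-type inequalities in three parameters $\varepsilon_{p,i}$ and optimizes over them via the AM--GM inequality to reach $(\sqrt\alpha+\sqrt\beta+\sqrt\gamma)^2$. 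The bound on $\widebar Q\widebar Q^*$ is left to ``the same argument''.

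Your route differs in three ways:
\begin{enumerate}
\item You merge $x-\widehat x$ and $\widehat x-\widebar x$ at the level of $\Ls$-functions.
\item You absorb the unknown $\|x\|_{\Ls}$ once, directly into $e=\|x-\widebar x\|_{\Ls}$, instead of first bounding $\|x\|_{\Ls}$ through $\|\widehat x\|_{\Ls}$ and then perturbing $\|\widehat x\|_{\Ls}$.
\item You apply Lemma~\ref{lem:synthesis_ad_bound} a single time and add $V$ by the triangle inequality for operator norms.
\end{enumerate}

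Nothing is lost by this. The infimum of the paper's three-parameter Young bound is exactly the square of the triangle-inequality bound $\sqrt\alpha+\sqrt\beta+\sqrt\gamma$. Moreover, $\sqrt\alpha+\sqrt\beta$ simplifies to your bound on $e$, since $\frac{hM\sqrt{\tau}\|\widebar\delta\|_{\mathbb{R}^n}}{\pi-hM}+\sqrt{\tau}\|\widebar\delta\|_{\mathbb{R}^n}=\frac{\pi\sqrt{\tau}\|\widebar\delta\|_{\mathbb{R}^n}}{\pi-hM}$.

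Your version is shorter, avoids the parameter optimization, and handles $\widebar Q$ explicitly. The paper's version reuses Proposition~\ref{prop:PL_app} as a black box and stays in the Gramian language of Remark~\ref{rem:young}. That remark is the tool later used, with fixed $\varepsilon_i$, to assemble the block-diagonal $\Theta$ in Section~\ref{sec:example}.
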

	\begin{proof}
		We prove only 
		the inequality \eqref{eq:tilP_ad_bound_coro}
		for $\widebar P \widebar P^*$.
		The same argument can be applied to show
		the inequality \eqref{eq:tilQ_ad_bound_coro} for
		$\widebar Q \widebar Q^*$.

		Define 
		$\widehat x \in \Hso([0,\tau];\mathbb{R}^n)$
		by
		\eqref{eq:hat_x_def}. 
		Let $P$ and $\widebar P_0$  be the synthesis operators  
		associated with 
		$x' -\widehat x'$ and $\widehat x' - \widebar x'$, respectively.
		Using Lemma~\ref{lem:til_hat_x_diff}, 
		we obtain
		\begin{equation*}
			\frac{
				(M
				\|\widehat x\|_{\Ls} +  c)^2}{(\pi - hM)^2} \leq 
			\frac{
				(M
				\|\widebar x\|_{\Ls} +M\sqrt{\tau}\|\widebar{\delta}\|_{\mathbb{R}^n} +  c)^2}{(\pi - hM)^2}.
		\end{equation*}
		This together with Proposition~\ref{prop:PL_app} yields
		\begin{align}
			PP^* &\preceq h^2
			\left( \frac{
				M
				\|\widebar x\|_{\Ls} +M\sqrt{\tau}\|\widebar{\delta}\|_{\mathbb{R}^n}+  c}{\pi - hM}\right)^2 I_n.
			\label{eq:P_ad_bound_coro}
		\end{align}
		From Lemmas~\ref{lem:synthesis_ad_bound} and
		\ref{lem:til_hat_x_diff}, it also follows that
		\begin{equation}
			\label{eq:tilP0_ad_bound}
			\widebar P_0 \widebar P_0^* \preceq \tau
			\|\widebar{\delta}\|_{\mathbb{R}^n}^2 I_n.
		\end{equation}

		A similar argument to Remark~\ref{rem:young} shows that
		for all $\varepsilon_{p,1} > 0$,
		\[
		P \widebar P_0^* + \widebar P_0P^* \preceq \varepsilon_{p,1} PP^* + 
		\frac{\widebar P_0\widebar P_0^*}{\varepsilon_{p,1}},
		\]
		and analogous bounds can be obtained for
		$PV^*+VP^*$ and $\widebar P_0 V^* +V\widebar P_0^*$. Therefore,	
		$\widebar P=P+\widebar P_0+V$ satisfies
		\begin{equation}
			\label{eq:tilP_ad_bound}
			\widebar P \widebar P^* \preceq
			(1+\varepsilon_{p,1} + \varepsilon_{p,2}) PP^* + 
			\left(1+ \frac{1}{\varepsilon_{p,1}} + \varepsilon_{p,3}\right)   \widebar P_0 \widebar P_0^* + \left(1+ \frac{1}{\varepsilon_{p,2}} + \frac{1}{\varepsilon_{p,3} }\right) VV^*
		\end{equation}
		for all $\varepsilon_{p,1},\varepsilon_{p,2},\varepsilon_{p,3} > 0$.
		Combining this with \eqref{eq:P_ad_bound_coro}
		and \eqref{eq:tilP0_ad_bound},
		we obtain
		\begin{equation}
			\label{eq:tPtP*_bound}
			\widebar P \widebar P^* \preceq
			\left( 
			(1+\varepsilon_{p,1} + \varepsilon_{p,2})\alpha
			+
			\left(1+ \frac{1}{\varepsilon_{p,1}} + \varepsilon_{p,3}\right)
			\beta + 
			\left(1+ \frac{1}{\varepsilon_{p,2}} + \frac{1}{\varepsilon_{p,3} }\right) \gamma \right)I_{n},
		\end{equation}
		where 
		\begin{align*}
			\alpha &\coloneqq h^2
			\left( \frac{
				M
				\|\widebar x\|_{\Ls} +M\sqrt{\tau}\|\widebar{\delta}\|_{\mathbb{R}^n}+  c}{\pi - hM}\right)^2, \\
			\beta &\coloneqq \tau
			\|\widebar{\delta}\|_{\mathbb{R}^n}^2, \\
			\gamma &\coloneqq \sigma^2.
		\end{align*}
		
		Define 
		$f \colon (0,\infty)^3 \to \mathbb{R}$ by
		\begin{align*}
			f(\varepsilon_1,\varepsilon_2,\varepsilon_3) &
			\coloneqq 
			(1+\varepsilon_1 + \varepsilon_2) \alpha + 
			\left(1+ \frac{1}{\varepsilon_1} + \varepsilon_3\right)   \beta + \left(1+ \frac{1}{\varepsilon_2} + \frac{1}{\varepsilon_3 }\right) \gamma.
		\end{align*}
		Applying
		\eqref{eq:Ineq_AGM} to the three pairs
		\[
		(a,b) = \left(\varepsilon_1 \alpha,
		\frac{\beta}{
			\varepsilon_1
		}
		\right),\,
		\left(\varepsilon_2 \alpha,
		\frac{\gamma}{
			\varepsilon_2
		}
		\right),\,
		\left(\varepsilon_3 \beta,
		\frac{\gamma}{
			\varepsilon_3
		}
		\right),
		\]
		we obtain
		\[
		\inf_{\varepsilon_1,\varepsilon_2,\varepsilon_3>0}
		f(\varepsilon_1,\varepsilon_2,\varepsilon_3) =
		\left(\sqrt{\alpha}+\sqrt{\beta}+\sqrt{\gamma}\right)^2.
		\]
		From this and \eqref{eq:tPtP*_bound},
		the desired inequality \eqref{eq:tilP_ad_bound_coro} follows.
	\end{proof}
	
	\section{Example: $H_{\infty}$-control from noisy sampled data}
	\label{sec:example}
	The purpose of this section is to illustrate
	the proposed data-driven control method
	through a numerical example.
	From noisy sampled data,
	we design a state-feedback gain for 
	$H_{\infty}$-control of a linearized aircraft model~\cite{Alwi2008}
	in the continuous-time setting.
	In Section~\ref{sec:example_system}, we describe 
	the aircraft model and the performance criterion
	for $H_{\infty}$-control. 
	In Section~\ref{sec:data_generation}, 
	we explain how the noisy sampled data are generated.
	In Section~\ref{sec:example_controller_design}, 
	we apply 
	the informativity condition
	to the reconstructed state and the perturbed input, using
	the error bound for the reconstructed state.
	We then construct
	a feedback gain from the solution of the LMIs.
	
	For $\lambda_1,\dots,\lambda_n \in \mathbb{R}$, 
	we denote by $\diag \,(\lambda_1,\ldots,\lambda_n)$ 
	the diagonal matrix with diagonal entries
	$\lambda_1,\dots,\lambda_n$. For matrices $\Lambda_1,\dots,\Lambda_n$, we denote by 
	$\blkdiag\,(\Lambda_1,\ldots,\Lambda_n)$ the block
	diagonal matrix with $\Lambda_1,\dots,\Lambda_n$ 
	on its diagonal blocks and zeros elsewhere.
	
	\subsection{System}
	\label{sec:example_system}
	We consider a linearized aircraft model around an operating condition
	corresponding to a true airspeed of $184~\mathrm{m/s}$. The system matrices $A_0$
	and $B_0$ are given by
	\begin{align*}
		A_0 &=
		\begin{bmatrix}
			-0.6803 & 0.0002 & -1.0490 & 0 \\
			-0.1463 & -0.0062 & -4.6726 & -9.7942 \\
			1.0050 & -0.0006 & -0.5717 & 0 \\
			1 & 0 & 0 & 0
		\end{bmatrix},\\
		B_0 &=
		\begin{bmatrix}
			-1.5539 & 0.0154 \\
			0 & 1.3287 \\
			-0.0398 & -0.0007 \\
			0 & 0
		\end{bmatrix}.
	\end{align*}
	The state vector is $\xi = \begin{bmatrix}
		\xi_1 & \xi_2 & \xi_3 & \xi_4
	\end{bmatrix}^{\top}$, where
	$\xi_1$ is the pitch rate in radians per second, $\xi_2$ is 
	the true
	airspeed in meters per second, 
	$\xi_3$ is the angle of attack in radians, and $\xi_4$ is the
	pitch angle in radians. 
	The inputs are the elevator deflection in radians
	and the total thrust in Newtons, where the thrust input is scaled by $10^5$.
	Since the true airspeed $\xi_2$ is 
	larger in magnitude than the 
	other state components, 
	we introduce the scaled state $x = S\xi$,
	where
	$
	S=\diag\,(1,1/40,1,1).
	$
	Therefore, the system $(A,B)$ used for data generation and controller
	design is given by
	\[
	A=SA_0S^{-1}
	\quad 
	\text{and} \quad 
	B=SB_0.
	\]
	
	In the $H_\infty$-control problem,
	the weighting matrix $\Omega$ associated with 
	the
	external signals is chosen as
	\[
	\Omega =
	0.01\diag\,(1,\,1,\,5,\,1,\,0,\,0,\,0,\,0,\,1,\,0.1),
	\]
	which implies that
	state biases are not considered in this example.
	The matrices 
	$C$, $D$, and $E$
	for the performance output are given by
	\[
	C=
	\begin{bmatrix}
		\diag\,(1,\,1,\,5,\,3)\\
		0_{2\times 4}
	\end{bmatrix},
	\quad
	D=
	\begin{bmatrix}
		0_{4\times 2}\\
		\diag\,(0.1,\,0.01)
	\end{bmatrix},\quad 
	\text{and} \quad 
	E = 0_{6 \times 10}.
	\]
	
	\subsection{Data generation}
	\label{sec:data_generation}
	We generate $30$ independent state-input trajectories
	$(x_k,u_k)_{k=1}^{30}$ over the time interval $[0,0.4]$.
	For each $k=1,\dots,30$, the initial state $x_k(0)$ is randomly generated as
	\[
	x_k(0)=\diag\,(0.05,\,0.075,\,0.06,\,0.08)\eta,
	\]
	where each element of $\eta \in \mathbb{R}^4$ is  independently sampled
	from the uniform distribution on $(-1,1)$.
	The input $u_k$ is generated as 
	\[
	u_k(t) = 
	\sum_{\ell=1}^{10}
	\begin{bmatrix} 
		a_{k,\ell,1} \sin (2\pi f_{k,\ell,1} t + \varphi_{k,\ell,1} ) \\
		a_{k,\ell,2} \sin (2\pi f_{k,\ell,2} t + \varphi_{k,\ell,2} ) 
	\end{bmatrix},\quad 0 \leq t \leq 0.4.
	\]
	Here, the amplitudes $a_{k,\ell,i}$, the
	frequencies $f_{k,\ell,i}$, and the initial phases 
	$\varphi_{k,\ell,i}$
	are independently sampled from the uniform distributions on the intervals $(0,0.15/\sqrt{10})$, $(0,5)$, and 
	$(0,2\pi)$, respectively.

	For each $k=1,\dots,30$,
	the state $x_k$ satisfies the following differential equation in
	the presence of the 
	process noise $v_k$ and the input disturbance $r_k$:
	\[
	x_k'(t) = A x_k(t) + B (u_k(t)+r_k(t)) + v_k(t),\quad 0 \leq t \leq 0.4.
	\]
	The process noise $v_k$ is modeled as continuous-time white noise with power
	spectral density $10^{-7}$. The input disturbance $r_k$ is generated by
	passing white noise with the same power spectral density through a
	first-order low-pass filter with transfer function 
	$F(s) = 1/(s + 1)$. A warm-up
	interval of length $10$ is used before data collection to
	remove the transient effects of the filter.
	
	We consider periodic sampling with period
	$
	h =3.125\times 10^{-4}.
	$
	At each sampling time $t=\ell h$, $\ell=0,\dots,64$,
	we measure 
	the noisy state $x_k(\ell h) + \delta_{k,\ell}$, where
	$\delta_{k,\ell}$ is the measurement noise.
	The componentwise 
	upper bound $\widebar \delta$ on the measurement noise is given by
	\[
	\widebar \delta =
	\begin{bmatrix}
		\widebar \delta_1 \\
		\widebar \delta_2 \\
		\widebar \delta_3 \\
		\widebar \delta_4
	\end{bmatrix}
	= 10^{-4}
	\begin{bmatrix}
		0.5 \\
		1.0 \\
		0.6 \\
		0.8
	\end{bmatrix}.
	\]
	For each $k=1,\dots,30$, 
	$\ell = 0,\dots,64$, and $i=1,\ldots,4$, the $i$-th element of
	the measurement error $\delta_{k,\ell}$ is generated
	independently from the uniform distribution on the interval $(-\widebar{\delta}_i, \widebar{\delta}_i)$ at each sampling instant.

	\subsection{Controller design}
	\label{sec:example_controller_design}
	Let $\widebar x_k$ be the state  reconstructed 
	from the noisy sampled data 
	$(x_k(\ell h)+\delta_{k,\ell})_{\ell=0}^{64}$ as in 
	\eqref{eq:tilde_x_def}.
	Let $V_k$ and $R_k$ be the synthesis operators associated with
	the process noise $v_k$ and the input disturbance $r_k$, respectively.
	Setting the constants
	$
	\sigma_v = 1.5\times 10^{-4}$ and 
	$\sigma_r = 5\times 10^{-5}$, we numerically
	verify that the noise and disturbance signals 
	$(v_k)_{k=1}^{30}$ and 
	$(r_k)_{k=1}^{30}$ satisfy
	\[
	\sum_{k=1}^{30}V_kV_k^* \preceq \sigma_v I_4\quad 
	\text{and} \quad 
	\sum_{k=1}^{30} R_kR_k^* \preceq \sigma_r I_2.
	\]
	Furthermore, the constants $M$ and $c$ used to bound the state derivative in Assumption~\ref{assump:deri_bound} are chosen as
	$
	M=1.6
	$
	and 
	$
	c=0.12.
	$
	
	By applying Theorem~\ref{thm:noisy_sampled_data}
	and Remark~\ref{rem:young} with
	\[
	(\varepsilon_1,\varepsilon_2,\varepsilon_3) = 
	(8.401 \times 10^{-3}, 8.028\times 10^{-4}, 9.555\times 10^{-3}),
	\]
	we set the noise-intensity matrix $\Theta$ to
	\[
	\Theta = 10^{-4}
	\blkdiag\,
	(5.872\times 10^{-2}I_4,\, 4.930 \times 10^{-2}I_4,\,
	1.013  I_2
	),
	\]
	which satisfies
	Assumption~\ref{assump:Theta} for
	the data $\mathfrak{D} = (\widebar x_k,u_k)_{k=1}^{30}$
	and the noise 
	\[
	(\widebar x_k'- x_k'  + v_k,\widebar x_k - x_k,-r_k).
	\]
	The LMIs in Theorem~\ref{thm:H_inf} are solved using
	MATLAB R2026a with YALMIP~\cite{Lofberg2004} and 
	MOSEK~\cite{MOSEK2026}. Then we find that
	the data $\mathfrak{D}$
	are informative for $H_{\infty}$-control with
	performance $\gamma =1.35$  under the noise class $\Delta_{\tau,\Theta}$.
	From the solutions $\Phi$ and $L$ of the LMIs,  the feedback gain $K$ is computed as
	\[
	K = L\Phi^{-1} = 
	\begin{bmatrix}
		5.737  &-24.46  &  2.493 &   8.415 \\
		0.7779 &  -4.585 &   0.8685  &  1.090
	\end{bmatrix}.
	\]
	In the no-control case $K = 0_{2 \times 4}$,
	we have $\|G_K\|_{H_\infty} = 68.99$.
	Therefore, the feedback gain obtained from
	the noisy sampled data improves the $H_{\infty}$-control performance.
	
	\section{Conclusion}
	\label{sec:conclusion}
	In this paper, we studied
	derivative-free data-driven control for continuous-time systems. We considered a broad noise class that covers process noise, measurement noise, and input disturbances. By embedding the state-input data into synthesis operators, we characterized the set of all data-consistent systems in terms of a QMI.
	Based on this characterization, we obtained a necessary 
	and sufficient LMI condition under which noisy data are informative for quadratic stabilization, and
	extended this result to $H_2$-control and $H_\infty$-control.
	From the viewpoint of synthesis operators,
	we also analyzed the error that arises when continuous-time signals are reconstructed from sampled data. Combining this error analysis with the proposed informativity framework,
	we can design controllers directly from noisy sampled data for continuous-time systems. 
	Future work includes extending the proposed method to the case where only input-output data are available.

	\appendix
	\section{Norm of a sum of synthesis operators}
	Let $\ell^2(\mathbb{N};\mathbb{R}^k)$
	denote the space of sequences 
	$\zeta=(\zeta_j)_{j\in\mathbb{N}}$ with $\zeta_j\in\mathbb{R}^k$ satisfying
	$\sum_{j=1}^{\infty} \|\zeta_j\|_{\mathbb{R}^k}^2 < \infty$.
	This space is equipped with the inner product
	\[
	\langle \zeta, \xi \rangle_{\ell^2} =
	\sum_{j=1}^{\infty}  \langle \zeta_j,\xi_j \rangle_{\mathbb{R}^k}
	\]
	for $\zeta=(\zeta_j)_{j\in\mathbb{N}}$ and 
	$\xi=(\xi_j)_{j\in\mathbb{N}}$ in 
	$\ell^2(\mathbb{N};\mathbb{R}^k)$.
	For brevity, we write $\ell^2(\mathbb{N})
	\coloneqq \ell^2(\mathbb{N};\mathbb{R})$.
	
	For $k=1,\dots,N$, 
	let $F_k \in  \mathcal{L}(\Hs  [0,\tau_k], \mathbb{R}^n)$ be the synthesis operator associated with $f_k \in \Ls ([0,\tau_k]; \mathbb{R}^n)$.
	Set $\tau \coloneqq (\tau_k)_{k=1}^{N}$, and define
	$F \in \mathcal{L}(\Ht, \mathbb{R}^n)$ by
	\[
	F\phi \coloneqq \sum_{k=1}^{N} F_k\phi_k,
	\quad 
	\phi = (\phi_k)_{k=1}^{N} \in \Ht.
	\]
	For $k=1,\dots, N$, 
	define the orthonormal basis $(\psi_{k,j})_{j \in \mathbb{N}}$ of $\Ls[0,\tau_k]$ by
	\[
	\psi_{k,j}(t) \coloneqq \sqrt{\frac{2}{\tau_k}} \sin\left(
	\frac{j \pi }{\tau_k} t
	\right),\quad t \in [0,\tau_k],~j \in \mathbb{N}.
	\]
	For $j \in \mathbb{N}$, 
	define $M_j \in \mathbb{R}^{N\times n}$ by
	\[
	M_j \coloneqq \left[
	\frac{\tau_k \langle f_{k,\ell}, \psi_{k,j} \rangle_{\Ls}}{j \pi}
	\right]_{1 \leq k \leq N,\, 1 \leq \ell \leq n},
	\]
	where $f_{k,\ell}$ is the $\ell$-th element of 
	$f_k $.

	We now  provide
	a representation of $\|F\|$ in terms of
	the matrix sequence $(M_j)_{j\in \mathbb{N}}$.
	Note that in the definition of $M_j$, 
	the Fourier coefficient $\langle f_{k,\ell}, \psi_{k,j} \rangle_{\Ls}$ is 
	scaled by a factor proportional to $1/j$. 
	Since the index $j$ corresponds to the 
	frequency of the sine function, 
	this indicates that the influence of 
	high-frequency noise components
	is smaller than that of low-frequency ones.
	From the next proposition, we see that 
	$\|F\|$ is small if
	the energy of 
	$(f_k)_{k=1}^{N}$
	is concentrated in a high-frequency range.
	This extends 
	the single-trajectory case \cite[Proposition~A]{Wakaiki2025Cont} to
	the multiple-trajectory case.

	\begin{proposition}
		Define $F \in \mathcal{L}(\Ht, \mathbb{R}^n)$ 
		and $M_j \in \mathbb{R}^{N\times n}$
		as above.
		Then
		\begin{equation*}
			\|F\| = \lim_{m\to \infty} \left\|\begin{bmatrix}
				M_{1} \\ \vdots \\ M_{m}
			\end{bmatrix}\right\|.
		\end{equation*}
	\end{proposition}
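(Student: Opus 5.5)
Since $F$ has finite rank, we use $\|F\|^2=\|FF^*\|=\sup_{\|\mu\|=1}\mu^{\top}FF^*\mu$ and express $FF^*$ through the sine bases. Following the single-trajectory representation \eqref{eq:T_ad_series} from \cite[Proposition~A]{Wakaiki2025Cont}, applied on each interval $[0,\tau_k]$, the adjoint acts componentwise as
\[
(F^*\mu)_k = \frac{\tau_k^2}{\pi^2}\sum_{j=1}^{\infty}\frac{\langle f_k(\cdot)^{\top}\mu,\psi_{k,j}\rangle_{\Ls}}{j^2}\,\psi_{k,j}.
\]
This holds because $F^*\mu$ is the $\Hs$-Riesz representer of $\phi\mapsto\langle f_k^{\top}\mu,\phi\rangle_{\Ls}$, i.e.\ the solution of $-g''=f_k^{\top}\mu$ with Dirichlet conditions, and $\psi_{k,j}$ are the Dirichlet eigenfunctions with eigenvalues $(j\pi/\tau_k)^2$. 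The family $\{\tau_k\psi_{k,j}/(j\pi)\}_{k,j}$, placed in the $k$-th slot of $\Ht$, is then an orthonormal basis of $\Ht$, because $\langle\psi_{k,j}',\psi_{k,i}'\rangle_{\Ls}=(j\pi/\tau_k)^2\delta_{ij}$ and the sines are complete.

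Expanding in this basis gives a Parseval identity:
\[
\mu^{\top}FF^*\mu=\|F^*\mu\|_{\Ht}^2=\sum_{k=1}^{N}\sum_{j=1}^\infty\Big(\frac{\tau_k\langle f_k^{\top}\mu,\psi_{k,j}\rangle_{\Ls}}{j\pi}\Big)^2=\sum_{j=1}^\infty\|M_j\mu\|^2 .
\]
The last equality uses $\langle f_k^{\top}\mu,\psi_{k,j}\rangle=\sum_{\ell}\mu_\ell\langle f_{k,\ell},\psi_{k,j}\rangle$, which is exactly the $k$-th entry of $M_j\mu$ up to the factor $\tau_k/(j\pi)$. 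Equivalently, $F$ acts on the basis element $e_{k,j}$ by $Fe_{k,j}=(k\text{-th row of }M_j)^{\top}$. Hence $FF^*=\sum_{j}M_j^{\top}M_j$, with the series converging in $\mathbb{S}^n$ since each diagonal entry is a convergent series of nonnegative terms bounded by $\|F\|^2$.

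Set $G_m:=\sum_{j\le m}M_j^{\top}M_j$. This equals $\mathcal{M}_m^{\top}\mathcal{M}_m$, where $\mathcal{M}_m$ is the stacked matrix in the statement, so $\|\mathcal{M}_m\|^2=\lambda_{\max}(G_m)$. The $G_m$ increase monotonically in the Loewner order and converge to $FF^*$ in the finite-dimensional space $\mathbb{S}^n$. By continuity of $\lambda_{\max}$, $\|\mathcal{M}_m\|^2\to\lambda_{\max}(FF^*)=\|F\|^2$, and the limit exists because the sequence is monotone and bounded. Taking square roots yields the claim.

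The main obstacle is justifying the orthonormal basis of $\Ht$ and the resulting Parseval identity, in particular the completeness of $\{\psi_{k,j}\}$ in $\Hs[0,\tau_k]$ under the derivative inner product. The plan is to reduce this to completeness of the cosine system $\{\sqrt{2/\tau_k}\cos(j\pi t/\tau_k)\}$ in the mean-zero subspace of $\Ls[0,\tau_k]$, since $\phi\mapsto\phi'$ is an isometry from $\Hs$ onto that subspace. Once this is in place, the rest is direct computation together with the cited single-trajectory representation.
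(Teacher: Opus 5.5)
Your argument is correct and is essentially the paper's proof. Both rest on the identity $\|F^*\mu\|_{\Ht}^2=\sum_{k=1}^{N}\|F_k^*\mu\|_{\Hs}^2=\sum_{j=1}^{\infty}\|M_j\mu\|^2$, followed by a truncation limit in $j$. The differences are minor: the paper imports the per-trajectory identity from the earlier single-trajectory result rather than deriving it from the orthonormal basis $\{\tau_k\psi_{k,j}/(j\pi)\}_{j}$ of $\Hs[0,\tau_k]$ as you do, and it passes to the limit via the operator-norm estimate $\|\Theta-\Pi_m\Theta\|\to 0$ for the stacked coefficient operator $\Theta\colon\mathbb{R}^n\to\ell^2(\mathbb{N};\mathbb{R}^N)$ rather than via $\sum_{j\le m}M_j^{\top}M_j\to FF^*$ and continuity of $\lambda_{\max}$.
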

	\begin{proof}
		Since $\|F\| = \|F^*\|$,
		it suffices to prove that
		\begin{equation}
			\label{eq:synthesis_ad_norm}
			\|F^*\| = \lim_{m\to \infty} \left\|\begin{bmatrix}
				M_{1} \\ \vdots \\ M_{m}
			\end{bmatrix}\right\|.
		\end{equation}
		Since
		\[
		FF^* = \sum_{k=1}^{N} F_kF_k^*,
		\]
		it follows that for all $\eta \in \mathbb{R}^n$,
		\begin{equation}
			\label{eq:T_sum_norm}
			\|F^* \eta\|_{\Ht}^2 = \langle F^*\eta, F^*\eta \rangle_{\Ht} =
			\langle 
			\eta, FF^* \eta
			\rangle_{\mathbb{R}^n} =
			\eta^{\top} FF^* \eta = \sum_{k=1}^{N} \|F_k^* \eta\|_{\Hs}^2.
		\end{equation}
		For $k=1,\dots, N$, 
		define the bounded linear operator $\Theta_k \colon \mathbb{R}^{n} \to \ell^2(\mathbb{N})$ by
		\[
		\Theta_{k} \eta \coloneqq 
		\left(
		\sum_{\ell=1}^{n}
		\frac{
			\tau_k
			\langle f_{k,\ell}, \psi_{k,j} \rangle_{\Ls}}{j \pi }\eta_\ell
		\right)_{j \in \mathbb{N}},\quad 
		\eta = \begin{bmatrix}
			\eta_1 \\ \vdots \\ \eta_{n}
		\end{bmatrix} \in \mathbb{R}^{n}.
		\]
		Then, as seen in the proof of the single-trajectory case
		\cite[Proposition~A]{Wakaiki2025Cont},
		we have
		\begin{equation}
			\label{eq:Tk*_norm}
			\|F_k^* \eta\|_{\Hs}=
			\|\Theta_k \eta \|_{\ell^2}
		\end{equation}
		for all $\eta \in \mathbb{R}^n$.
		Define the bounded linear operator $\Theta \colon \mathbb{R}^n\to \ell^2(\mathbb{N};\mathbb{R}^{N})$ by
		\[
		\Theta \eta  \coloneqq 
		\left(
		\begin{bmatrix}
			(\Theta_1 \eta)_j \\ 
			\vdots  \\
			(\Theta_{N} \eta)_j
		\end{bmatrix}
		\right)_{j \in \mathbb{N}},\quad 
		\eta \in \mathbb{R}^n,
		\]
		where $(\Theta_{k} \eta)_j$ denotes
		the $j$-th element of $\Theta_{k} \eta \in
		\ell^2(\mathbb{N})$ for $k=1,\dots,N$.
		Then
		\begin{equation}
			\label{eq:Lambda_norm}
			\sum_{k=1}^{N} 
			\|\Theta_k\eta \|_{\ell^2}^2 =
			\sum_{k=1}^{N} \sum_{j=1}^{\infty} |(\Theta_k \eta)_j|^2
			= 
			\sum_{j=1}^{\infty}  \left\|
			\begin{bmatrix}
				(\Theta_1 \eta)_j \\ 
				\vdots  \\
				(\Theta_{N} \eta)_j
			\end{bmatrix}
			\right\|_{\mathbb{R}^{N}}^2 =
			\|\Theta \eta \|_{\ell^2}^2
		\end{equation}
		for all $\eta \in \mathbb{R}^n$.
		Combining \eqref{eq:T_sum_norm}--\eqref{eq:Lambda_norm},
		we obtain
		\begin{equation}
			\label{eq:T*Lambda}
			\|F^*\| = 	\|\Theta \|.
		\end{equation}
		
		For $m \in \mathbb{N}$, let 
		$\Pi_m$ be the truncation operator on 
		$\ell^2(\mathbb{N};\mathbb{R}^{N})$ 
		such that for $\zeta = (\zeta_j)_{j \in \mathbb{N}}\in  \ell^2(\mathbb{N};\mathbb{R}^{N})$ and
		$j \in \mathbb{N}$,
		the $j$-th element $(\Pi_m \zeta )_j$ is defined by
		\[
		(\Pi_m \zeta )_j \coloneqq \begin{cases}
			\zeta_j, & j \leq m, \\
			0, & j >m.
		\end{cases}
		\]
		Let $\varepsilon >0$ be arbitrary.
		For each $i=1,\dots,n$, there exists $m_i \in \mathbb{N}$
		such that for all $m \geq m_i$,
		\[
		\|(\Theta - \Pi_m\Theta) e_i\|_{\ell^2} \leq \varepsilon,
		\]
		where $e_i$ is the $i$-th canonical basis vector of $\mathbb{R}^n$.
		Since this implies that for all $m \geq \max\{m_1,\dots,m_n \}$,
		\[
		\|\Theta - \Pi_m\Theta \| \leq \varepsilon  \sqrt{n},
		\]
		we have
		\begin{equation}
			\label{eq:lambda_PiN}
			\lim_{m\to \infty} \|\Theta - \Pi_m\Theta\| = 0.
		\end{equation}
		
		By construction,
		\[
		M_j \eta = 
		\begin{bmatrix}
			(\Theta_1 \eta)_j \\ 
			\vdots  \\
			(\Theta_{N} \eta)_j
		\end{bmatrix}
		\]
		for all $j \in \mathbb{N}$ and $\eta \in \mathbb{R}^n$.
		This yields
		\[
		\|\Pi_m \Theta \eta\|_{\ell^2}^2 = 
		\sum_{j=1}^m \|M_j \eta\|_{\mathbb{R}^{N}}^2 =
		\left\|\begin{bmatrix}
			M_{1} \\ \vdots \\ M_{m}
		\end{bmatrix} \eta \right\|_{\mathbb{R}^{mN}}^2
		\]
		for all $m \in \mathbb{N}$ and $\eta \in \mathbb{R}^n$.
		Therefore,
		\begin{equation}
			\label{eq:PiN_M}
			\|\Pi_m \Theta\| = 
			\left\|\begin{bmatrix}
				M_{1} \\ \vdots \\ M_{m}
			\end{bmatrix}  \right\|
		\end{equation}
		for all $m \in \mathbb{N}$.
		From \eqref{eq:T*Lambda}--\eqref{eq:PiN_M},
		we conclude that \eqref{eq:synthesis_ad_norm} holds.
	\end{proof}
	\printbibliography
	\end{document}